\title{Bergman interpolation on finite Riemann surfaces.\\
Part I:  Asymptotically Flat Case}
\author{Dror Varolin} 
\email{dror@math.sunysb.edu}
\address{Department of Mathematics \newline \indent Stony BrookUniversity \newline \indent Stony Brook, NY 11794-3651}
\newcommand{\noi}{\noindent}
\newcommand{\co}{{\mathcal O}}
\newcommand{\sa}{{\mathscr A}}
\newcommand{\sC}{{\mathscr C}}
\newcommand{\se}{{\mathscr E}}
\newcommand{\sh}{{\mathscr H}}
\newcommand{\sr}{{\mathscr R}}
\newcommand{\fp}{{\mathfrak p}}
\newcommand{\vp}{\varphi} 
\newcommand{\ve}{\varepsilon}
\newcommand{\A}{{\mathbb A}}
\newcommand{\C}{{\mathbb C}}
\newcommand{\D}{{\mathbb D}}
\newcommand{\N}{{\mathbb N}}
\newcommand{\p}{{\mathbb P}}
\newcommand{\R}{{\mathbb R}}
\newcommand{\red}{\hfill $\diamond$}
\newcommand{\di}{\partial}
\newcommand{\dbar}{\bar \partial}
\newcommand{\re}{{\rm Re\ }}
\newcommand{\im}{{\rm Im\ }}
\newcommand{\relcomp}{\subset \subset}
\newcommand{\ii}{\sqrt{-1}}
\newcommand{\tensor}{\otimes}
\def\XXint#1#2#3{{\setbox0=\hbox{$#1{#2#3}{\int}$} 
\vcenter{\hbox{$#2#3$}}\kern-.5\wd0}}
\begin{document}

\theoremstyle{plain}
\newtheorem{thm}{\sc Theorem}
\newtheorem*{s-thm}{\sc Theorem}
\newtheorem{lem}{\sc Lemma}[section]
\newtheorem{d-thm}[lem]{\sc Theorem}
\newtheorem{prop}[lem]{\sc Proposition}
\newtheorem{cor}[lem]{\sc Corollary}

\theoremstyle{definition}
\newtheorem{conj}[lem]{\sc Conjecture}
\newtheorem{prob}[lem]{\sc Open Problem}
\newtheorem{defn}[lem]{\sc Definition}
\newtheorem*{s-defn}{\sc Definition}
\newtheorem{qn}[lem]{\sc Question}
\newtheorem{ex}[lem]{\sc Example}
\newtheorem{rmk}[lem]{\sc Remark}
\newtheorem*{s-rmk}{\sc Remark}
\newtheorem{rmks}[lem]{\sc Remarks}
\newtheorem*{ack}{\sc Acknowledgment}




\maketitle




\vskip .3in



\begin{abstract}
We study the Bergman space interpolation problem of open Riemann surfaces obtained from a compact Riemann surface by removing a finite number of points. We equip such a surface with what we call an asymptotically flat conformal metric, i.e., a complete metric with zero curvature outside a compact subset. We then establish sufficient conditions for interpolation in weighted Bergman spaces over asymptotically flat Riemann surfaces.  When our weights have curvature that is quasi-isometric to the asymptotically flat boundary metric, we show that these sufficient conditions are necessary, unless the surface has at least one cylindrical end, in which case, the necessary conditions are slightly weaker than the sufficiency conditions.
\end{abstract}

\section*{Introduction}

A fundamental topic in complex analysis is the so-called interpolation problem for Bergman spaces.  To describe the problem, let $X$ be an open Riemann surface with conformal metric $\omega$, let $\psi : X \to [-\infty, \infty)$ be a weight function on $X$, and let $\Gamma \subset X$ a closed discrete subset.   
\begin{enumerate}
\item[(a)] We define the Hilbert spaces 
\[
\sh ^2 (X, e^{-\psi}\omega) := \left \{ g \in \co (X)\ ;\ \int _X |g|^2 e^{-\psi} \omega < +\infty \right \}
\]
and 
\[
\ell ^2 (\Gamma , e^{-\psi}) := \left \{ f :\Gamma \to \C \ ;\ \sum _{\gamma \in \Gamma} |f(\gamma)|^2e^{-\psi(\gamma)} < +\infty \right \}.
\]
\item[(b)] We say that $\Gamma$ is an interpolation sequence (for the triple $(X,\omega,\psi)$) if the restriction map 
\[
\sr _{\Gamma} : \sh ^2(X,e^{-\psi}\omega) \to \ell ^2 (\Gamma, e^{-\psi})
\]
is surjective, i.e., for any $f \in \ell ^2 (\Gamma ,e^{-\psi})$ there exists $F \in \sh ^2 (X,e^{-\psi}\omega)$ such that $F|_{\Gamma} = f$.
\red
\end{enumerate}

\noi Given a triple $(X, \omega , \psi)$, a complete solution of the interpolation problem consists in characterizing interpolation sequences $\Gamma \subset X$ among all closed discrete subsets of $X$.  Preferably, one characterizes such $\Gamma$ by geometric properties expressed in terms of the metric $\omega$ and the weight $\psi$.

\begin{s-rmk}
In their paper \cite{ss}, Shapiro and Shields defined a general interpolation problem for Hilbert spaces of holomorphic functions.  In Section \ref{SSI} we will recall the Shapiro-Shields interpolation problem, and we will show that, in the cases we consider here, the two interpolation problems are identical.
\red
\end{s-rmk}

\begin{s-rmk}
There is also a companion {\it sampling problem} for Bergman spaces, that examines the injectivity of the restriction map (and requires the boundedness of the inverse).  Though it is an interesting and important problem, the solution of the sampling problem involves different methods, and will not be considered in the present article.
\red
\end{s-rmk}

We study the interpolation problem in the Bergman space of an open Riemann surface that is obtained from a compact Riemann surface by removing a finite number of points.  Although such surfaces have a canonical metric of constant curvature (with this curvature equal to zero when the surface is $\p _1$ with one or two points removed, and negative otherwise), we are going to consider metrics that are, in general, slightly less canonical.  Namely, our metrics are asymptotically flat, but even more restricted.  More precisely, if we have an open Riemann surface $X$ that is the complement of finitely many points in a compact Riemann surface $Y$, we can find a compact set $K \relcomp X$ with smooth, $1$-dimensional boundary, such that the complement of $K$ is a finite number of disjoint sets $U_1,...,U_N$ with each $U_j$ is biholomorphic to the punctured disk $\D^* := \D - \{0\}$.  We assume that $X$ is equipped with a smooth conformal metric $\omega$ (which we think of as a positive $(1,1)$-form) such that for each $j$, $\omega |_{U_j}$ is holomorphically isometric to a constant multiple of one of the following two metrics on $\D ^*$:
\begin{enumerate}
\item[(i)] The inverted Euclidean metric 
\[
\omega _{o} := \frac{\ii dz \wedge d\bar z}{2 |z|^4}.
\]
\item[(ii)] The cylindrical metric 
\[
\omega _c := \frac{\ii dz \wedge d\bar z}{2|z|^2}.
\]
\end{enumerate}

\begin{s-rmk}
There are other flat metrics on the punctured disk which are not the inverted Euclidean or cylindrical metric.  In fact, all these flat metrics are isometric to the metrics $e^{\re F}\omega_{\alpha}$ for some $f \in \co (\D)$ and $\alpha \in \R$, where
\[
\omega _{\alpha}:= \frac{\ii dz \wedge d\bar z}{2|z|^{2\alpha}}.
\]
The puncture is infinitely far away in these metrics if and only if $\alpha \ge 1$.  So the critical case is the cylindrical metric, and all other cases are cones that open.  The angle of the cone is greater than 90 degrees as soon as $\alpha > 2$.

The analysis of these more general cones is more delicate.  The metrics have been treated, in a certain sense, in the work \cite{mmo} of Marco, Massaneda and Ortega Cerd\`a, which actually treats metrics that not necessarily flat.  However, the results of \cite{mmo} do not apply to the cylindrical case.  Nevertheless, the results of \cite{mmo} can be adapted to extend the results of the present paper to general asymptotically flat metrics.  In fact, any non-cylindrical flat end can be treated, using the results of \cite{mmo}, in a way analogous to the way we treated Euclidean ends here.  In the interest of brevity, we have restricted ourselves to the case in which only cylindrical and Euclidean ends are present.
\red
\end{s-rmk}

\begin{s-rmk}
As we just mentioned, there is another possibility for a metric of constant curvature, with the curvature being negative, but this case needs to be treated differently given the current state of the art of $L^2$ methods, particularly regarding $L^2$ extension.  We therefore consider the negatively curved case in the sequel \cite{v-rs2} to the present article.
\red
\end{s-rmk}

The main result of this paper is the following theorem.

\begin{thm}\label{main}
Let $X$ a Riemann surface obtained from a compact Riemann surface by removing a finite number of points, and let $\omega$ be an asymptotically flat conformal metric on $X$.  Let $\vp \in \sC ^2(X)$ be a smooth weight function, and assume there exist positive constants $m < M$ such that 
\begin{equation}\label{curv-bd-gen}
m \omega \le \ii \di \dbar \vp + {\rm R}(\omega) \le M \omega,
\end{equation}
where ${\rm R}(\omega)$ is the curvature $(1,1)$-form of $\omega$.  Let $\Gamma \subset X$ be a closed discrete subset.  Denote the restriction map by $\sr _{\Gamma} : \sh ^2 (X, e^{-\vp}\omega) \to \ell ^2 (\Gamma , e^{-\vp})$.  If\begin{enumerate}
\item[(i+)] $\Gamma$ is uniformly separated with respect to the geodesic distance associated to $\omega$, and 
\item[(ii+)] the asymptotic (upper) density $D^+_{\vp}(\Gamma)$ of $\Gamma$ is strictly less than $1$,
\end{enumerate}
then $\sr _{\Gamma}$ is surjective.  Conversely, if $\sr_{\Gamma}$ is surjective, then 
\begin{enumerate}
\item[(i-)] $\Gamma$ is uniformly separated with respect to the geodesic distance associated to $\omega$, and 
\item[(ii-)] $D^+_{\vp}(\Gamma) \le 1$.  Moreover, if none of the ends are cylindrical, then $D^+_{\vp}(\Gamma)<1$.
\end{enumerate}
\end{thm}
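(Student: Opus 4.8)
The proof naturally splits into the sufficiency direction (conditions (i+) and (ii+) imply surjectivity of $\sr_\Gamma$) and the necessity direction (surjectivity implies (i-) and (ii-)). For sufficiency, the standard approach via Hörmander-type $\dbar$-estimates is to first construct a smooth (non-holomorphic) extension $\tilde F$ of the data $f \in \ell^2(\Gamma, e^{-\vp})$ supported near $\Gamma$, using that uniform separation (i+) gives uniformly sized disjoint geodesic disks around each $\gamma$ on which one can place bump functions; the asymptotic flatness and the quasi-isometry bound \eqref{curv-bd-gen} make these local models essentially Euclidean or cylindrical, so $\tilde F \in L^2(X, e^{-\vp}\omega)$ with norm controlled by $\|f\|$. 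Then one solves $\dbar u = \dbar \tilde F$ with an $L^2$ estimate; the crucial point is that one must perturb the weight $\vp$ to a singular weight $\vp + 2\sum_\gamma \chi_\gamma \log |z - z(\gamma)|$ (localized via cutoffs $\chi_\gamma$) so that the solution $u$ automatically vanishes on $\Gamma$, while the extra negativity of $\ii\di\dbar$ of the singular part is absorbed by the strict inequality coming from $D^+_\vp(\Gamma) < 1$. Here the density condition enters through a Riesz-measure / counting argument: $D^+_\vp(\Gamma) < 1$ guarantees that, after averaging over large geodesic disks, the curvature $\ii\di\dbar\vp + \mathrm{R}(\omega)$ still dominates the point masses $2\pi \sum_\gamma \delta_\gamma$ with a definite margin, which is precisely what makes the twisted Bochner–Kodaira–Nakano inequality solvable. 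The output $F = \tilde F - u$ is then holomorphic, lies in $\sh^2(X, e^{-\vp}\omega)$, and restricts to $f$ on $\Gamma$.

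For necessity, I would argue by a mix of soft functional analysis and a concentration/rescaling argument. First, surjectivity of $\sr_\Gamma$ together with the open mapping theorem gives a bounded linear right inverse, i.e., a bounded extension operator $E : \ell^2(\Gamma, e^{-\vp}) \to \sh^2(X, e^{-\vp}\omega)$. Applying $E$ to the delta sequences $\delta_\gamma$ (normalized in $\ell^2(\Gamma,e^{-\vp})$) produces functions that are large at $\gamma$ but have globally controlled norm; comparing this with the pointwise bound $|h(\gamma)|^2 e^{-\vp(\gamma)} \lesssim \int_{B(\gamma,r)} |h|^2 e^{-\vp}\omega$ (valid by sub-mean-value / Bergman-kernel estimates on the asymptotically flat geometry) forces the points of $\Gamma$ to be uniformly separated — this gives (i-). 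To get the density bound (ii-) I would use the extension operator to build, on large geodesic disks $B(p, R)$, holomorphic functions exhibiting more independent "degrees of freedom" over $\Gamma \cap B(p,R)$ than the weighted Bergman space on a slightly larger disk can accommodate if $D^+_\vp(\Gamma) > 1$; concretely, one compares $\#(\Gamma \cap B(p,R))$ against $\frac{1}{2\pi}\int_{B(p,R)} (\ii\di\dbar\vp + \mathrm{R}(\omega))$ via a Jensen/dimension-counting inequality and lets $R \to \infty$, then averages over $p$.

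The sharp distinction in (ii-) — equality $D^+_\vp(\Gamma) = 1$ being attainable only when a cylindrical end is present, and strict inequality $D^+_\vp(\Gamma) < 1$ being forced otherwise — is where I expect the real work to lie, and it is genuinely a feature of the geometry rather than a technicality. The point is that on a Euclidean (inverted-flat) end the weighted Bergman space "sees" the end as a half-plane-like region with an extra dimension's worth of room that must be accounted for in the counting, creating a strict gap, whereas the cylindrical end is the borderline case whose $L^2$ cohomology is critically balanced (mirroring the $\alpha = 1$ critical exponent noted in the remark on $\omega_\alpha$), so the critical density is exactly achieved. Making this rigorous requires a careful analysis of the model problems on $\D^*$ with the metrics $\omega_o$ and $\omega_c$: one must compute, or at least sharply estimate, the dimension of the space of holomorphic functions on an end with prescribed $L^2$ growth, and then patch the end contributions to the global count. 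I would isolate these two model computations as separate lemmas and feed them into the global density argument; the global part is then a relatively routine (if lengthy) exhaustion-and-averaging argument of the type familiar from the work of Seip, Berndtsson–Ortega Cerdà, and Ortega Cerdà–Seip on the disk and the plane.
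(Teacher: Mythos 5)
Your necessity outline for uniform separation matches the paper (closed graph/bounded extension operator plus the derivative Bergman estimate), but both halves of your plan have genuine gaps. On sufficiency: the paper does not run the ``smooth extension plus $\dbar$'' scheme at all; it applies the Ohsawa--Takegoshi-type Theorem \ref{ot-basic} directly, with the singular metric $e^{-\lambda_r}$ for the line bundle of $\Gamma$ built from the \emph{radius-$r$ logarithmic averages} of $\log|T|^2$ (and their covered-mean analogue in cylindrical ends), patched across the ends, together with the averaged weight $\vp_r$ corrected on the compact part by a Fubini--Study-type term. Your route could in principle be made to work, but as sketched it fails at the sharp threshold: modifying the weight by $2\sum_\gamma \chi_\gamma\log|z-z(\gamma)|$ with unit-scale cutoffs introduces, in an annulus around each $\gamma$, a fixed amount of negative curvature that must be absorbed \emph{pointwise} by $\ii\di\dbar\vp+{\rm R}(\omega)$; that is a local density restriction strictly stronger than $D^+_\vp(\Gamma)<1$, which is an averaged condition over disks of radius $r\to\infty$. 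The entire technical content of the sufficiency proof is precisely to build a singular function that encodes the large-$r$ averaging (the functions $\lambda^T_r$, $\mu(\log|\fp^*T|^2)_r$, and their patching on $X$) and to show that uniform separation gives the uniform lower bound $|dT(\gamma)|^2_\omega e^{-\lambda_r(\gamma)}\ge C_r$ (Propositions \ref{denom-bds-eucl} and \ref{unif-sep-lower-bd-cyl}) so that $\ell^2(\Gamma,e^{-\vp})$ data become admissible; none of this appears in your sketch, and your appeal to ``averaging over large geodesic disks'' does not by itself produce the needed weight.

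On the density bound (ii-), Jensen's formula counts $\Gamma\cap D^o_r(z)$ only after you have in hand a holomorphic function, with norm bounded \emph{uniformly in $z$}, that vanishes on all of $\Gamma$ near $z$ and is normalized at $z$. Producing such functions is where the work lies, and it is done in the paper by the perturbation lemmas you omit: when $z$ is very close to $\Gamma$ one replaces the nearest point by $z$ (Proposition \ref{jiggle-eucl}/\ref{jiggle-cyl}), and when $z$ is far from $\Gamma$ one adds $z$ to $\Gamma$ after adding $\ve|\cdot-z|^2$, resp.\ $\ve(\log|\cdot/z|)^2$, to the weight (Propositions \ref{add-eucl} and \ref{add-cyl}), with quantitative control of the interpolation constants and a choice like $\ve=r^{-2}$ to tame the $-C\log\ve$ boundary term. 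Your proposed mechanism for the strict/non-strict dichotomy is also not the right one: it is not a ``dimension count'' of $L^2$ holomorphic functions on the model ends. In the paper the strict inequality for non-cylindrical ends comes from one more perturbation --- contracting $\Gamma$ slightly toward $z$ by $\zeta\mapsto \frac{r(\zeta-z)}{r+\delta}+z$, allowed by the jiggling lemma --- whose change of variables in Jensen's formula yields the factor $(1+\delta/r)^{-1}$ and hence $D^+_\vp(\Gamma)\le 1-\frac{m\delta}{2M}<1$; this dilation has no analogue compatible with the cylindrical geometry, which is exactly why only $D^+_\vp(\Gamma)\le 1$ survives there, and the Borichev--Lyubarskii example shows that equality is genuinely attained. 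Finally, the general case is reduced to the two model cases end by end (the density sees only the tails), rather than by any global exhaustion-and-averaging over centers $p$ in $X$.
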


\noi (Here and in the rest of the paper, a sequence $\Gamma$ is said to be uniformly separated with respect to some distance function $\rho$ if the number $\inf \{ \rho (\gamma _1, \gamma _2)\ ;\ \gamma _1,\gamma _2 \in \Gamma,\ \gamma _1 \neq \gamma _2\}$ is positive.)

\begin{s-rmk}
The non-strictness of the density bound in (ii-) of Theorem \ref{main} when $X$ has at least one cylindrical end is not an artifact of the proof, but rather the best one can do.  Indeed, an example of Borichev and Lyubarskii \cite{bl} exhibits a sequence $\Gamma$ in the Riemann sphere with one cylindrical puncture, such that $\sr _{\Gamma}$ is surjective (in fact, in their case, it is bijective) and $D^+_{\vp}(\Gamma) = 1$.
\red
\end{s-rmk}

Roughly speaking, the asymptotic density of $\Gamma$ is the least upper bound of certain weighted densities of the number of points of $\Gamma$ in large geodesic disks, the least upper bound being taken over all possible centers of the disks.  We shall give the precise definition of the asymptotic density $D^+_{\vp}(\Gamma)$ later in the introduction. 

\medskip

The history of the interpolation problem for Bergman spaces is surprisingly not very old.  As we already mentioned, in \cite{ss} Shapiro and Shields introduced the problem of studying interpolation sequences in Bergman spaces.  The first characterization of interpolation sequences for Bergman spaces was achieved by Seip and Wallsten \cite{s1,sw} for the case of the classical Bargmann-Fock space $X=\C$, $\omega= \omega _o$, and $\psi (z) = |z|^2$.  In this case, it was shown that a sequence $\Gamma \subset \C$ is an interpolation sequence if and only if 
\begin{enumerate}
\item[(i)] $\Gamma$ is uniformly separated with respect to the Euclidean distance, and 
\item[(ii)] the asymptotic density of $\Gamma$ is below a very precise threshold; with the appropriate normalization, 
\[
D^+ (\Gamma):= \limsup _{r \to \infty} \sup _{z\in \C} \frac{\# (\Gamma \cap D_r^o(z))}{r^2} < 1.
\]
\end{enumerate}
Seip then established an analogous result for the Bergman space in the unit disk \cite{s2}, which we will not state precisely here.  Berndtsson and Ortega Cerd\`a generalized the sufficiency part of Seip's Theorems to much more general weights in $\C$ and in the unit disk.  We will not state their results for the unit disk here, but their interpolation theorem in $\C$ can be stated as follows. 

\begin{d-thm}\cite{quimbo}  \label{quimbo-thm}
Let $\vp \in \sC ^2(\C)$ satisfy $0< m \le \frac{\di ^2 \vp}{\di z \di\bar z} \le M$ for some constants $m$ and $M$.  If $\Gamma \subset \C$ is uniformly separated with respect to the Euclidean distance, and if 
\[
D^+_{\vp} (\Gamma) := \limsup _{r \to \infty} \sup _{z \in \C} \frac{\# (\Gamma \cap D^o_r(z))}{\frac{1}{\pi} \int _{D^o_r(z)} \Delta \vp} <1,
\]
then $\Gamma$ is an interpolation set.
\end{d-thm}

The converse of Theorem \ref{quimbo-thm} in the entire plane was proved by Ortega Cerd\`a and Seip \cite{quimseep}, who also indicated how one can establish necessity for the case of the unit disk. 

Recently, Pingali and the author \cite{pv} established an improvement of Theorem \ref{quimbo-thm} in which arbitrary (pluri)subharmonic weights satisfying a density condition are allowed.  The article \cite{pv} concerns the higher dimensional version of the interpolation problem, and makes use of an Ohsawa-Takegoshi type extension theorem stated below as Theorem \ref{ot-basic}.  The interpolation theorem in dimension $1$ is a little easier to prove, and is established below as Theorem \ref{s-suff-eucl} (in a slightly different form than that of \cite{pv}).

The interpolation problem for more general open Riemann surfaces was first considered by Schuster and the author \cite{sv1}.  That article gave very general sufficient conditions for interpolation (and sampling) on finite (and a few other) Riemann surfaces, but it was not expected that all of these conditions would also be necessary.  Later, Ortega Cerd\`a \cite{quim-rs} considered interpolation and sampling problems for finite Riemann surfaces with only codimension-$1$ boundary.  He gave necessary and sufficient conditions for interpolation and sampling for $L^p$ analogs of our Hilbert spaces, for $1 \le p \le \infty$.  We will discuss Ortega Cerd\`a's Theorem in \cite{v-rs2}.  More importantly for us, in \cite{quim-rs} Ortega Cerd\`a made the crucial observation that the asymptotic density of a sequence is completely determined by the behavior of that sequence near the boundary of the surface; an idea that we will make extensive use of here.

Ortega Cerd\`a did not allow punctures, i.e., $0$-dimensional boundary components, for the Riemann surfaces he considered.  To some extent, the present article and its forthcoming sequel grew out of a desire to understand interpolation problems in the presence of punctures.  

Let us now turn to our definition of the asymptotic density. We will first define the asymptotic density in two special cases, namely the Euclidean case $(\C, \omega _o)$\footnote{Here we use the notation $\omega_o = \frac{\ii}{2}dz \wedge d\bar z$, even though we already used the same notation for the inverted metric.}  and the cylindrical case $(\C^*, \omega _c)$ (see (ii) above for the definition of $\omega _c$), and then give the general definition for asymptotically flat finite Riemann surfaces.

\begin{enumerate}
\item[(a)] \underline{Euclidean case}:  Given a closed discrete subset $\Gamma \subset \C$, we can find a function $T \in \co (\C)$ such that 
\[
{\rm Ord}(T) = \Gamma.
\]
Here and below, ${\rm Ord}$ denotes the order divisor, i.e., ${\rm Ord}(T)$ is a divisor supported on the zero set of $T$, and the integer assigned to each $z \in T^{-1}(0)$ is the order of vanishing of $T$ at $z$.  Thus saying that ${\rm Ord}(T) = \Gamma$ means that $T$ vanishes to order $1$ at each point of $\Gamma$, and has no other zeros.  For a given radius $r > 0$, we can define the {\it logarithmic average} of $\log |T|^2$ over the Euclidean annulus $\A^o_r(z)$ of inner radius $1$ and outer radius $r$, and center $z \in \C$, as 
\[
\lambda ^T_r (z) := \frac{1}{c_r} \int _{\A^o_r(z)}  \log |T(\zeta)|^2 \log \frac{r^2}{|\zeta -z|^2}  \omega _o (\zeta),
\]
where $c_r := \lambda _r ^{\sqrt{e}} = \pi (r^2 -1 +\log \frac{1}{r^2})$.  The function $\lambda ^T_r$ is subharmonic and locally bounded, and the distribution 
\[
\Upsilon ^{\Gamma}_r (z) := \ii \di \dbar \lambda ^T_r (z)
\]
is independent of the choice of $T$ satisfying ${\rm Ord}(T)=\Gamma$.  In fact, by the Poincar\`e-Lelong Formula, 
\[
\Upsilon ^{\Gamma}_r (z) =  \frac{2\pi}{c_r} \int _{\A^o _r(z)} \log \frac{r^2}{|\zeta -z|^2} \delta _{\Gamma},
\]
where $\delta _{\Gamma} := \sum _{\gamma \in \Gamma} \delta _{\gamma}$ is the sum of the point masses on the points of $\Gamma$.

\begin{defn}\label{Eucl-density-defn}
The asymptotic upper density of $\Gamma$ with respect to a subharmonic weight $\vp$ is the (possibly infinite) non-negative number 
\[
D^+_{\vp}(\Gamma) := \inf \left \{ \frac{1}{\alpha} \ ;\ \forall \ r_o > 0 \ \exists \ r > r_o\text{ such that }\ii \di \dbar \vp_r - \alpha \Upsilon ^{\Gamma}_r \ge 0\right \},
\]
where 
\begin{equation}\label{log-avg-defn}
\vp _r (z) := \frac{1}{\pi r^2} \int _{D^o_r(z)} \log \frac{r^2}{|\zeta -z|^2} \vp (\zeta) \omega _o(\zeta)
\end{equation}
is the logarithmic average of $\vp$ over the Euclidean disk of radius $r$ centered at $z$.
\red
\end{defn}

\begin{s-rmk}
Note that if the weight $\vp$ is sufficiently regular, then 
\[
D^+_{\vp} (\Gamma) = \limsup _{r \to \infty} \sup _{z \in \C} \frac{2\pi \int _{\A^o_r(z)} \log \frac{r^2}{|\zeta -z|^2} \delta _{\Gamma}(\zeta)}{\int _{D^o_r(z)} \log \frac{r^2}{|\zeta -z|^2}\Delta \vp (\zeta)},
\]
which is a logarithmic version of the asymptotic upper density in Theorem \ref{quimbo-thm}.  In fact, Ortega Cerd\`a and Seip pointed out that these two densities are equivalent.  
\red
\end{s-rmk}

\item[(b)]\underline{C}y\underline{lindrical case}:  For a number of reasons, it is convenient to work on the universal cover.  The exponential map $\fp : \C \to \C^*; \zeta \mapsto e^{\zeta}$ is the universal covering map of $\C^*$, and is an isometry of the Euclidean and cylindrical metrics.  

\begin{defn}\label{cyl-density-defn}
Given a closed discrete subset $\Gamma \subset \C^*$, we define the {\it cover density} of $\Gamma$ with respect to $\vp$ as 
\[
\tilde D^+_{\vp}( \Gamma) := D^+_{\tilde \vp}(\tilde \Gamma),
\]
where $\tilde \Gamma:= \fp ^{-1}(\Gamma)$ and $\tilde \vp:= \fp ^* \vp$.  
\red
\end{defn}

\item[(c)] \underline{General case}:  Now let $(X,\omega)$ be an asymptotically flat finite Riemann surface with either cylindrical or Euclidean ends, and denote by $U_1,...,U_N$ its asymptotically flat ends.  Each end $U_i$ comes with a biholomorphic map $F_i : \C - D^o_r(0) \to U_i$ of the complement of some Euclidean disk centered at $0$ to $U_i$, and $F_i$ is an isometry of $\omega$ and either the cylindrical or Euclidean metric.  If $\omega |_{U_i}$ is isometric under $F_i$ to the Euclidean metric,  we define 
\[
D^+_{\vp, i}(\Gamma) := D^+_{F_i^*\vp} (F_i ^{-1}(\Gamma \cap U_i)).
\]
And if $\omega |_{U_i}$ is isometric under $F_i$ to the cylindrical metric,  we define 
\[
D^+_{\vp, i}(\Gamma) := \tilde D^+_{F_i^*\vp} (F_i ^{-1}(\Gamma \cap U_i)).
\]

\begin{defn}\label{upper-density-gen-def}
The number 
\[
D^+_{\vp} (\Gamma) := \max_{1\le i \le n+m} D^+_{\vp, i}(\Gamma)
\]
is called the {\it asymptotic upper density} of $\Gamma \subset X$ with respect to the weight $\vp$.
\red
\end{defn}
\end{enumerate}

\begin{s-rmk}
In Definition \ref{upper-density-gen-def} we are glossing over one point:  the weight functions $F_i^*\vp$ are not defined on the whole plane or punctured plane, yet in the definition of density we average over large disks which might exit the domain of definition of these pulled back weights.  There is an easy way to remedy this problem: one cuts off the weights and adds a multiple of the Euclidean metric in the complement.  However, it is not even necessary to go to such pedantic lengths because, as we already mentioned, the density is completely determined by the "infinite tails" of the sequence.  In other words, if we threw away and finite subset of $\Gamma$, the resulting sequence would have the same density as $\Gamma$.  Said another way, we can restrict ourselves to averaging over large disks that lie in the domain of the weight $F_i^* \vp$.
\red
\end{s-rmk}

\begin{s-rmk}
It is not hard to show that when $X = \C$ or $X=\C^*$ with the Euclidean or cylindrical metric respectively, then the number $D^+_{\vp}(\Gamma)$ is the density or the cover density of $\Gamma$ respectively.
\red
\end{s-rmk}

The organization of the paper is as follows.  In Section \ref{background-section} we establish some basic background theory, most of it known and all of it essentially known.  In Section \ref{SSI} we recall the classical notion of interpolation sets in the sense of Shapiro-Shields, and show that, in the case of asymptotically flat Riemann surfaces with Euclidean and cylindrical ends, our notion of interpolation sequences agrees with the Shapiro-Shields notion, thus lending additional motivation to our definitions.  In Section \ref{Eucl-section} we prove Theorem \ref{main} for the special case $(X,\omega) = (\C, \omega _o)$.  The proof splits up into two parts.  In the first part we prove the sufficiency of the conditions of Theorem \ref{main} for interpolation, and in the second part we prove the necessity of these conditions for any interpolation sequence.  In fact, we prove a slightly stronger version of the main theorem, in which we weaken the lower bounds on the curvature of the weight $\vp$.  More importantly, we prove a stronger sufficiency result based on the $L^2$ Extension Theorem \ref{ot-basic}. The improved sufficiency theorem is very similar to work done by the author and Pingali \cite{pv}, and is just a slight modification of that work, including a simplification that arises in the $1$-dimensional setting.  Our proof of necessity follows closely the work of Ortega Cerd\`a and Seip \cite{quimseep}.  In Section \ref{cyl-section} we establish Theorem \ref{main} in the cylindrical case, with a similar strong sufficiency result.  Of course, here we have two ends, both of which are cylindrical, so we can only prove that the density of an interpolation set is at most $1$.  Finally in Section \ref{main-section} we finish the proof of Theorem \ref{main}.  Necessity is a relatively easy consequence of the two special cases, and sufficiency is handled in a manner similar to the special cases, except that we do not get quite as strong a sufficiency result in the general setting.

\begin{ack}
I am grateful to Henri Guenancia, Jeff McNeal, Quim Ortega Cerd\`a, Mihai P\u aun and Vamsi Pingali for many fruitful and stimulating discussions.  Finally, thanks are due to the referee for a careful reading of the manuscript and a number of very useful suggestions, including the suggestion that we compare our notion of interpolation sequences with the classical notion of Shapiro-Shields Interpolation for Hilbert spaces of holomorphic functions.
\end{ack}

\section{Background}\label{background-section}

Let $X$ be a Riemann surface.  We write  $d^c = \frac{\ii}{2} (\dbar - \di)$, and denote by
\[
\Delta := dd^c = \ii \di \dbar 
\]
the Laplace operator (so normalized).  

\subsection{Complete flat Hermitian metrics}

As is well known, every Riemann surface admits a complete Hermitian metric of constant curvature, i.e., a metric $\omega$ satisfying 
\[
\Delta \omega = c \omega
\]
for some constant $c$.  Once the surface is fixed, the sign of $c$ is determined.  If we further fix $c$ with the given sign, the metric is unique when $c$ is non-zero, and in the flat case it is determined by some kind of cohomology class.  

Only one Riemann surface has a complete positively curved conformal metric of constant curvature, namely $\p _1$.  Relatively few Riemann surfaces have a complete flat conformal metric:  these are $\C, \C^*$ and all complex tori.  All other Riemann surfaces have a complete metric of constant negative curvature, as they are covered by the disk.  

Let us look first at complete conformal metrics of identically zero curvature.  Since we are not interested in compact Riemann surfaces in this article, the only cases are $\C$ and $\C ^*$.  We shall refer to these as the Euclidean and cylindrical cases respectively.

\begin{enumerate}
\item[(i)] {\bf Euclidean case}: Of course, on $\C$ we have the Euclidean metric $g _o = |dz|^2$.  A result in Riemannian geometry says that if a complete Riemannian manifold has constant (sectional) curvature, then the exponential map exists on the entire tangent space and is a Riemannian covering map, with respect to the constant metric on $T_{\C , 0}$.  From this result it is not hard to show that any complete conformal metric $g$ on $\C$ is a constant multiple of $g_o$.  Indeed, let $g = e^{h}g_o$ be a conformal metric in $\C$ with $h(0)=0$ and let $F :T_{\C , 0} \to \C$ be the exponential map.  Since $\C$ is simply connected, $F$ is a diffeomorphism, and moreover it satisfies $F^*g = g_o$.  But 
\[
F^*(e^h |dz|^2) = e^{F^*h} |\di F +\dbar F |^2 = e^{F^*h}( |\di F|^2 + |\dbar  F|^2  +2\re \di F \overline{\di F}).
\]
Since the metric $g_o$ on $\C \cong T_{\C, 0}$ is conformal, we must have $\di F= 0$ or $\dbar F =0$.  Since the orientation of the tangent space is the same as that of the manifold, we must have the latter, so that $F$ is holomorphic.  It follows that $F \in {\rm Aut}(\C)$, and since $F$ preserves the origin, it must be a homothety, i.e., $g = ag_o$ for some positive constant $a$.

In the rest of the article, we denote by $\omega _o$ the (metric form of the) Euclidean metric.

\bigskip

\item[(ii)] {\bf Cylindrical case}:  On $\C ^*$ we have the complete flat metric 
\[
g_c := \frac{|d\zeta|^2}{|\zeta|^2}.
\]
(Note that this metric is invariant under the inversion $\zeta \mapsto \zeta ^{-1}$, so that the singularity is the same at $0$ and $\infty$.  The metric is also invariant under the scaling maps $\zeta \mapsto c \zeta$, $c \in \C^*$, and thus we have ${\rm Aut}(\C^*) \subset {\rm Isom}(\omega _o)$.)  If we take any holomorphic covering map $\fp : \C \to \C^*$ sending $0$ to $1$ (it is easy to see that then $\fp (z) = e^{a z}$ for some $a \in \C$) then
\[
\fp ^* g_c = \frac{|e^{a z} dz|^2}{|e^{a z}|^2} = |a|^2 |dz|^2
\]
is a constant multiple of the Euclidean metric.

Now let $g$ be any complete flat conformal metric on $\C ^*$, normalized so that $g(1) =g_c(1)$.  By the result of Riemannian geometry mentioned in (i), the exponential map $F : (T_{\C^*, 1}, g_o) \to (\C^*,g)$ is a Riemannian covering map.  Since the two metrics are conformal and $F$ is a local isometry and covering map, the same calculation as in (i) shows that $F$ must be holomorphic.  But then  
\[
F (z) = e^{a z},
\]
for some $a \in \C$, and so it follows that the metric $g$ is a constant multiple of $g_c$.

In the rest of the article, we denote by
\[
\omega _c = \frac{\ii dz \wedge d\bar z}{2|z|^2}
\]
(the K\"ahler form of) the cylindrical metric on $\C^*$.
\end{enumerate}

While the cylindrical and Euclidean metrics are the only complete flat metics, there are other flat metrics on the punctured disk that are "complete near the puncture", i.e., metrics $\omega$ on $\D^* \cup \di \D$ such that for each $z \in \D^*$ such that 
\[
\lim _{\zeta \to 0} d_{\omega}(z,\zeta) = +\infty.
\]
Let $\omega$ be such a metric.  We can write 
\[
\omega = f \omega _o,
\]
where $f$ is a positive function.  If $\omega$ is flat, then $\log f$ is harmonic.  In the punctured disk, any nowhere zero harmonic function is of the form $|z|^{-2\alpha}e^{\re F}$ for some $\alpha \in \R$ and some $F \in \co (\D)$. Indeed, 
\[
\int _{\di \D} d^c \log |z|^2 = \frac{1}{2\ii} \int _{\di \D} \frac{dz}{z} - \frac{d\bar z}{\bar z}  = 2\pi, 
\]
so if 
\[
\alpha := - \frac{1}{2\pi}\int _{\di \D} d^c \log f,
\]
then $\log f + \alpha \log |z|^2$ has no periods.  Therefore there is a holomorphic function $F \in \co (\D)$ such that 
\[
\log f = \log |z|^{-2\alpha} + 2\re F = \log (|z|^{2\alpha} e^{2F}).
\]

\begin{s-rmk}
From the point of view of this paper, one can assume all the end metrics are the flat metrics $\omega _{\alpha}$.  Indeed, if one starts with an end metric $e^{2\re F}\omega _{\alpha}$,  the factor $e^{2\re F}$ can be absorbed into the weight function.  The weights $\vp$ and $\vp +2\re F$ have the same curvature, and our hypotheses involve only the curvature of the weights.
\red
\end{s-rmk}

As one can easily check, completeness at the puncture implies that $\alpha \ge 1$.  As already mentioned, we will only consider the two cases $\alpha =1$ (cylinder) and $\alpha =2$ (Euclidean). 

\subsection{The $\mathbf{L^2}$ Extension Theorem}

In this section, we recall the following well-known result, which is often called an Ohsawa-Takegoshi type extension theorem, and which by now has many statements and proofs.  Here we state the version in \cite{v-tak}.

\begin{d-thm}\label{ot-basic}
Let $(X,\omega)$ be a Stein K\"ahler manifold of complex dimension $n$, and let $Z \subset X$ be a smooth hypersurface.  Assume there exists a section $T \in H^0(X,L_Z)$ and a metric $e^{-\lambda}$ for the line bundle $L_Z \to X$ associated to the smooth divisor $Z$, such that $e^{-\lambda}|_Z$ is still a singular Hermitian metric, and 
\[
\sup _X |T|^2e^{-\lambda} \le 1.
\]
Let $H \to X$ be a holomorphic line bundle with singular Hermitian metric $e^{-\vp}$ such that $e^{-\vp}|_Z$ is still a singular Hermitian metric.  Assume that 
\[ 
\ii (\di \dbar \vp  +{\rm Ricci}(\omega)) \ge \ii \di \dbar \lambda _Z
\]
and
\[
\ii (\di \dbar \vp +{\rm Ricci}(\omega)) \ge(1+ \delta) \ii \di \dbar \lambda _Z
\]
for some positive constant $\delta \le 1$.  Then for any section $f \in H^0(Z,H)$ satisfying 
\[
\int _Z \frac{|f|^2e^{-\vp}}{|dT|_{\omega}^2e^{-\lambda }}dA_{\omega} <+\infty 
\]
there exists a section $F\in H^0(X,H)$ such that 
\[
F|_Z=f \quad \text{and} \quad \int _X |F|^2e^{-\vp} dV_{\omega} \le \frac{24\pi}{\delta}\int _Z \frac{|f|^2e^{-\vp}}{|dT|_{\omega}^2e^{-\lambda }}dA_{\omega}.
\]
\end{d-thm}

\subsection{Weights with bounded Laplacian}\label{holo-recentering}

We shall need some weighted $L^2$ estimates in the setting where the weights have bounded Laplacian.  With the exception of Lemma \ref{unif-1pt-interp-eucl}, we shall omit the proofs and settle for references.

\begin{lem}\label{ddbar-est-eucl}
For each $r>0 $ there exists a constant $C=C_{r}>0$ with the following property.  For any $\sC^2$-smooth $(1,1)$-form $\theta$ satisfying 
\[
- M \omega _o \le \theta \le M \omega _o,
\]
and any $z \in \C$ there exists $u \in \sC ^2 (D^o_{2r}(z))$ such that 
\[
\Delta u = \theta \quad \text{and} \quad \sup _{D^o_r(z)} (|u|+|du|_{\omega _P}) \le C M.
\]
\end{lem}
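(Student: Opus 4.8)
The plan is to reduce to the case $z=0$ by translation and then write down an explicit particular solution by means of the logarithmic (Newtonian) potential, after which the required bounds reduce to two elementary two-dimensional integral estimates. The hypothesis $-M\omega_o \le \theta \le M\omega_o$ is translation invariant and the disks $D^o_{2r}(z)$, $D^o_r(z)$ are carried to $D^o_{2r}(0)$, $D^o_r(0)$ by $w \mapsto w-z$, so it is enough to treat $z=0$: if $v \in \sC^2(D^o_{2r}(0))$ solves $\Delta v = \theta(\cdot + z)$ with $\sup_{D^o_r(0)}(|v|+|dv|_{\omega_P}) \le C_r M$, then $u(w):=v(w-z)$ does the job on $D^o_{2r}(z)$, with the same bound over $D^o_r(z)$. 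Since every real $(1,1)$-form on $\C$ is a multiple of $\omega_o = \tfrac{\ii}{2}dz\wedge d\bar z$, write $\theta = \psi\,\omega_o$ with $\psi \in \sC^2(\C)$ real-valued; the two-sided bound on $\theta$ then reads $|\psi|\le M$, and the equation $\Delta u = \theta$ becomes the Poisson equation $\partial^2 u/\partial z\,\partial\bar z = \psi/2$.

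Next I would fix the radius $R:=3r>2r$ and set
\[
v(w) := \frac{1}{\pi}\int_{D^o_R(0)} \log|w-\zeta|\,\psi(\zeta)\,dA(\zeta),
\]
the logarithmic potential of $\psi$ restricted to $D^o_R(0)$, where $dA$ denotes Lebesgue measure on $\C$. Because $\psi$ is $\sC^2$ (a fortiori locally H\"older) on all of $\C$, the classical regularity theory of the Newtonian potential gives $v \in \sC^2(D^o_R(0))$ and $\partial^2 v/\partial z\,\partial\bar z = \psi/2$ in the classical sense there; hence $\Delta v = \theta$ on $D^o_R(0) \supset D^o_{2r}(0)$, so in particular $v\in\sC^2(D^o_{2r}(0))$. (One could instead solve the Dirichlet problem $\Delta v = \theta$ on $D^o_R(0)$ with $v|_{\di D^o_R(0)}=0$ and invoke interior Schauder estimates, but the explicit potential keeps the constants transparent.)

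It then remains to estimate $v$ and $dv$ on $D^o_r(0)$. For $w \in D^o_r(0)$ one has $D^o_R(0)\subset D^o_{R+r}(w)$, so the change of variables $\zeta \mapsto \zeta - w$ gives
\[
|v(w)| \le \frac{M}{\pi}\int_{D^o_{R+r}(0)} |\log|\zeta||\,dA(\zeta) =: c_1(r)\,M,
\]
and, differentiating under the integral sign (so that $dv(w) = \tfrac1\pi\int_{D^o_R(0)} d_w\log|w-\zeta|\,\psi(\zeta)\,dA(\zeta)$, where $d_w\log|w-\zeta|$ has $\omega_o$-norm $|w-\zeta|^{-1}$),
\[
|dv(w)|_{\omega_o} \le \frac{M}{\pi}\int_{D^o_{R+r}(0)} \frac{dA(\zeta)}{|\zeta|} = 2(R+r)\,M .
\]
Both integrals are finite exactly because $\log|\cdot|$ and $|\cdot|^{-1}$ are locally integrable in dimension two, and the resulting constants depend only on $r$. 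Finally, on the compact disk $\overline{D^o_r(0)}$ the metric $\omega_P$ is uniformly comparable to $\omega_o$ with a constant depending only on $r$, so $|dv|_{\omega_P}\le c_2(r)\,|dv|_{\omega_o}$ there; combining the bounds yields $\sup_{D^o_r(0)}(|v|+|dv|_{\omega_P}) \le \big(c_1(r)+2(R+r)c_2(r)\big)M =: C_r\,M$, and translating back gives the statement for all $z$.

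\textbf{Expected difficulty.} There is no serious obstacle; the only two points needing care are that the constant $C_r$ depends on $r$ alone and not on $\psi$, $M$, or $z$ (which is precisely why one wants the explicit potential rather than an abstract existence argument), and that $\Delta v = \theta$ holds classically with $v\in\sC^2$, which is just the standard regularity of the Newtonian potential of a $\sC^2$ (hence H\"older) density.
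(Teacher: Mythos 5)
Your argument is correct: reducing to $z=0$ by translation, writing $\theta=\psi\,\omega_o$ with $|\psi|\le M$, and taking the logarithmic potential of $\psi$ over a slightly larger disk gives a $\sC^2$ solution of $\Delta u=\theta$ with bounds controlled by the local integrability of $\log|\cdot|$ and $|\cdot|^{-1}$, with constants depending only on $r$ — this is precisely the standard potential-theoretic proof that the paper itself omits and defers to \cite{sv2}. The only loose end is not yours: $\omega_P$ is never defined in the paper (almost certainly a slip for $\omega_o$, or the Poincar\'e metric of $D^o_{2r}(z)$, which on $D^o_r(z)$ is comparable to $\omega_o$ with a constant depending only on $r$), and your comparability remark handles either reading.
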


As a corollary, one has the following lemma.

\begin{lem}\label{weight-centering-eucl}
Let $\vp \in \sC ^2(\C)$ satisfy 
\[
-M\omega _o \le \Delta \vp \le M \omega _o
\]
for some positive constant $M$.  Then for any $r >0$ there exists a constant $C=C_r$ such that for any $z \in \C$ there is a holomorphic function $F \in \co (D^o_r(z))$ satisfying 
\[
F(z) = 0, \quad \quad |2\re F (\zeta) - \vp(\zeta) + \vp (z)| \le C, \quad \text{and} \quad |2 \re dF(\zeta) - d\vp(\zeta)| \le C 
\]
for all $\zeta \in D^o_r(z)$.  The constant $C$ depends only on $r$ and $M$, and not on $\vp$ or $z$.
\end{lem}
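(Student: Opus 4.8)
The plan is to deduce the lemma directly from Lemma~\ref{ddbar-est-eucl}, applied to the bounded $(1,1)$-form $\theta := \Delta \vp$. By hypothesis $-M\omega_o \le \theta \le M\omega_o$, so for each $z \in \C$ Lemma~\ref{ddbar-est-eucl} furnishes a function $u = u_z \in \sC^2(D^o_{2r}(z))$ with $\Delta u = \theta = \Delta\vp$ on $D^o_{2r}(z)$ and $\sup_{D^o_r(z)}(|u| + |du|) \le C_0 M$, where $C_0 = C_0(r)$ is independent of $z$ (this last point being where one uses translation invariance of $\omega_o$). One small technical wrinkle to dispatch first: $\theta = \Delta\vp$ is only continuous, whereas Lemma~\ref{ddbar-est-eucl} is literally stated for $\sC^2$ forms; I would handle this by a routine mollification of $\vp$ on a slightly larger disk, which perturbs the constants harmlessly, or simply observe that continuity of $\theta$ already forces the solution $u$ to be $\sC^{1}$, which is all the argument below needs.

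Next I would set $h := \vp - u$, which is harmonic on $D^o_{2r}(z)$ since $\Delta h = \Delta\vp - \theta = 0$. As the disk is simply connected, $h$ admits a harmonic conjugate, so there is $G \in \co(D^o_{2r}(z))$ with $2\re G = h$; then I would take $F := G - G(z)$, regarded as a holomorphic function on $D^o_r(z)$, which visibly satisfies $F(z) = 0$.

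Then I would read off the two remaining estimates. For every $\zeta \in D^o_r(z)$,
\[
2\re F(\zeta) = h(\zeta) - h(z) = \big(\vp(\zeta) - \vp(z)\big) - \big(u(\zeta) - u(z)\big),
\]
so $|2\re F(\zeta) - \vp(\zeta) + \vp(z)| = |u(\zeta) - u(z)| \le 2 C_0 M$. For the gradient bound, since $F$ is holomorphic one has $2\re(dF) = d(2\re F) = dh = d\vp - du$, whence $|2\re dF(\zeta) - d\vp(\zeta)| = |du(\zeta)| \le C_0 M$. Taking $C := 2 C_0 M$, which depends only on $r$ and $M$ and not on $\vp$ or $z$, completes the argument.

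I do not expect a real obstacle here: all the analytic substance sits inside Lemma~\ref{ddbar-est-eucl}, namely the uniform-in-$z$ solvability of $\Delta u = \theta$ with a sup-estimate on the smaller disk. The only points needing a moment's care are matching the factor-of-$2$ normalization when passing from the harmonic function $h$ to the holomorphic $G$, the elementary identity $d(2\re F) = 2\re(dF)$ for holomorphic $F$, and the remark that uniformity of the constant in $z$ comes for free from translation invariance of the Euclidean metric.
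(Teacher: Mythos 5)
Your argument is correct, and it is exactly the standard proof the paper has in mind (the paper itself omits it and points to \cite{sv2}): solve $\Delta u = \Delta\vp$ on $D^o_{2r}(z)$ with the uniform bounds of Lemma \ref{ddbar-est-eucl}, note $\vp - u$ is harmonic, realize it as $2\re$ of a holomorphic function on the simply connected disk, and normalize at $z$ --- precisely the mechanism the paper also uses in the proof of Proposition \ref{avged-wt}. Your handling of the regularity wrinkle (that $\Delta\vp$ is only continuous) is the same routine point implicit in the paper's own application of Lemma \ref{ddbar-est-eucl} to $\theta = \Delta\vp$, so no further comment is needed.
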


\noi For the proofs of Lemmas \ref{ddbar-est-eucl} and \ref{weight-centering-eucl}, see, for example, \cite{sv2}.

Lemma \ref{weight-centering-eucl} gives the following  generalizations of Bergman's inequality.

\begin{prop}\label{bergman-eucl}
Let $\vp \in \sC ^2(\C)$ satisfy 
\[
-M \omega _o \le  \Delta \vp \le M\omega _o.
\]
Then for each $r > 0$ there exists $C_r=C_r(M)$ such that for all $f \in \sh ^2(\C, e^{-\vp} \omega _o)$, 
\begin{enumerate}
\item[(a)] 
\[
|f(z)|^2e^{-\vp(z)} \le C_r \int _{D^o_r(z)} |f|^2e^{-\vp} \omega _o,
\]
and 
\item[(b)]
\[
|d(|f|^2e^{-\vp})(z)| \le C_r \int _{D^o_r(z)} |f|^2e^{-\vp} \omega _o.
\]
\end{enumerate}
\end{prop}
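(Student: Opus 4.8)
The plan is to deduce Proposition \ref{bergman-eucl} from Lemma \ref{weight-centering-eucl} by the standard trick of locally trivializing the weight via a holomorphic function, thereby converting a weighted estimate into an unweighted one for a holomorphic function on a disk. Fix $r>0$ and $z \in \C$. Apply Lemma \ref{weight-centering-eucl} with radius $2r$ (say) to obtain a holomorphic $F \in \co(D^o_{2r}(z))$ with $F(z)=0$, $|2\re F(\zeta) - \vp(\zeta)+\vp(z)| \le C$, and $|2\re dF(\zeta)-d\vp(\zeta)| \le C$ for all $\zeta \in D^o_{2r}(z)$, where $C=C_r(M)$. Then $g := f e^{-F}$ is holomorphic on $D^o_{2r}(z)$, and on that disk one has, pointwise,
\[
|g(\zeta)|^2 = |f(\zeta)|^2 e^{-2\re F(\zeta)} = |f(\zeta)|^2 e^{-\vp(\zeta)} \cdot e^{\vp(\zeta)-2\re F(\zeta)},
\]
so that $e^{-C-\vp(z)} |f(\zeta)|^2 e^{-\vp(\zeta)} \le |g(\zeta)|^2 \le e^{C-\vp(z)}|f(\zeta)|^2e^{-\vp(\zeta)}$ by the first estimate in the Lemma. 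Consequently $|g|^2$ is comparable, up to the fixed constant $e^{\pm C}e^{\mp\vp(z)}$, to $|f|^2 e^{-\vp}$ everywhere on $D^o_{2r}(z)$, and in particular $g \in \sh^2(D^o_{2r}(z))$ with $\int_{D^o_{2r}(z)} |g|^2\omega_o \le e^{C}e^{-\vp(z)} \int_{D^o_{2r}(z)} |f|^2e^{-\vp}\omega_o < \infty$.

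For part (a): since $g$ is holomorphic on $D^o_{2r}(z)$, the classical sub-mean-value (Bergman) inequality gives $|g(z)|^2 \le \frac{1}{\pi r^2}\int_{D^o_r(z)} |g|^2\omega_o$. Because $F(z)=0$ we have $|g(z)|^2 = |f(z)|^2 = |f(z)|^2 e^{-\vp(z)}\cdot e^{\vp(z)}$, and combining with the two comparisons above,
\[
|f(z)|^2 e^{-\vp(z)} = e^{-\vp(z)}|g(z)|^2 \le \frac{e^{-\vp(z)}}{\pi r^2}\int_{D^o_r(z)}|g|^2\omega_o \le \frac{e^{C}}{\pi r^2}\int_{D^o_r(z)}|f|^2 e^{-\vp}\omega_o,
\]
which is (a) with the constant $C_r := e^{C}/(\pi r^2)$, depending only on $r$ and $M$. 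For part (b), write $u := |f|^2 e^{-\vp}$. I would estimate $|du(z)|$ by comparing $u$ to $|g|^2$. One has $u = |g|^2 e^{2\re F - \vp}$, so $du = e^{2\re F-\vp}\big(d|g|^2 + |g|^2 d(2\re F - \vp)\big)$; at the point $z$, using $F(z)=0$ and the gradient estimate $|2\re dF - d\vp| \le C$, we get $|du(z)| \le e^{-\vp(z)}\big(|d|g|^2(z)| + C|g(z)|^2\big)$. Now $|g(z)|^2$ is already controlled by (a), and $|d|g|^2(z)| = 2|{\rm Re}(\bar g(z) g'(z))| \le 2|g(z)|\,|g'(z)|$, while both $|g(z)|$ and $|g'(z)|$ are bounded by $C_r'(\int_{D^o_{2r}(z)}|g|^2\omega_o)^{1/2}$ via Bergman's inequality for $g$ and for its derivative (interior estimate / Cauchy estimate on the smaller disk $D^o_r(z) \subset D^o_{2r}(z)$). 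Hence $|d|g|^2(z)| \lesssim_r \int_{D^o_{2r}(z)}|g|^2\omega_o \lesssim_r e^{-\vp(z)}$ times $\int_{D^o_{2r}(z)}|f|^2e^{-\vp}\omega_o$, and multiplying back by $e^{-\vp(z)}$ and absorbing the comparison constants yields (b), after noting that (as remarked following Definition \ref{upper-density-gen-def}) one may freely replace $D^o_{2r}$ by $D^o_r$ by relabeling $r$.

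The only mildly delicate point — not really an obstacle — is bookkeeping the powers of $e^{\pm\vp(z)}$ and confirming that the final constants depend on $r$ and $M$ alone: this is automatic because every invocation of Lemma \ref{weight-centering-eucl} and of the classical Bergman/Cauchy estimates produces constants independent of $z$ (translation invariance of $\omega_o$) and of $\vp$. I would state the proof at radius $2r$ and then rename to get the clean statement with radius $r$, or alternatively just absorb the factor-of-two into the constant $C_r$. No use of the $L^2$ extension theorem or of $\dbar$-techniques is needed here; the proposition is purely a consequence of holomorphic trivialization of the weight plus elementary one-variable estimates.
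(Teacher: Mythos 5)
Your route --- holomorphically trivializing the weight via Lemma \ref{weight-centering-eucl} and then applying the classical sub-mean-value and Cauchy estimates to $g=fe^{-F}$ --- is exactly the argument the paper intends (the paper itself states that Lemma \ref{weight-centering-eucl} ``gives'' the Proposition and defers the details to \cite{quimseep}), and the overall structure of your proof of both (a) and (b) is sound.

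One recurring sign slip should be corrected, because as literally written it breaks part (b): from $|2\re F(\zeta)-\vp(\zeta)+\vp(z)|\le C$ one gets $|g(\zeta)|^2=|f(\zeta)|^2e^{-2\re F(\zeta)}\asymp e^{+\vp(z)}\,|f(\zeta)|^2e^{-\vp(\zeta)}$ up to factors $e^{\pm C}$, not $e^{-\vp(z)}$ as in your displayed comparison and in the bound $\int_{D^o_{2r}(z)}|g|^2\omega_o\le e^Ce^{-\vp(z)}\int_{D^o_{2r}(z)}|f|^2e^{-\vp}\omega_o$. In part (a) this is harmless: your final chain is the one obtained from the correct sign, with the $e^{\pm\vp(z)}$ factors cancelling, and the constant $e^C/(\pi r^2)$ is right. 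In part (b), however, the step ``$|d|g|^2(z)|\lesssim_r\int_{D^o_{2r}(z)}|g|^2\omega_o\lesssim_r e^{-\vp(z)}\int|f|^2e^{-\vp}\omega_o$, and multiplying back by $e^{-\vp(z)}$'' would, taken at face value, give a bound of the form $e^{-2\vp(z)}\int_{D^o_{2r}(z)}|f|^2e^{-\vp}\omega_o$, which is not statement (b) since $e^{-\vp(z)}$ is uncontrolled; with the corrected comparison $\int|g|^2\omega_o\lesssim e^{+\vp(z)}\int|f|^2e^{-\vp}\omega_o$, the prefactor $e^{2\re F(z)-\vp(z)}=e^{-\vp(z)}$ cancels it exactly and (b) follows as you intend. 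Two cosmetic points: passing from $D^o_{2r}$ back to $D^o_r$ requires nothing more than running the argument with $r/2$ in place of $r$ (the remark following Definition \ref{upper-density-gen-def}, which concerns densities and tails of sequences, is not what licenses this), and the derivative bound $|g'(z)|\le C_r\bigl(\int_{D^o_{2r}(z)}|g|^2\omega_o\bigr)^{1/2}$ is indeed standard, so the remainder of (b) is fine.
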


\noi For the proof, see \cite{quimseep}.

\begin{cor}\label{Bergman-sums-eucl}
If $\Gamma$ is a finite union of uniformly separated sequences then 
\begin{enumerate}
\item[(a)] 
\[
\sum _{\gamma\in \Gamma} |f(\gamma)|^2e^{-\vp(\gamma)} \le C_r \sum _{\gamma\in \Gamma} \int _{D^o_r(\gamma)} |f|^2e^{-\vp} \omega _o \le \tilde C_r \int _{\C} |f|^2e^{-\vp} \omega _o,
\]
and 
\item[(b)]
\[
\sum _{\gamma\in \Gamma} |d(|f|^2e^{-\vp})(\gamma)| \le C_r \sum _{\gamma\in \Gamma} \int _{D^o_r(\gamma)} |f|^2e^{-\vp} \omega _o \le \tilde C_r \int _{\C} |f|^2e^{-\vp} \omega _o.
\]
\end{enumerate}
\end{cor}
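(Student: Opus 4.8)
The plan is to deduce both inequalities from Proposition \ref{bergman-eucl} by a standard covering argument, using the uniform separation of $\Gamma$ to control the overlap of the disks $D^o_r(\gamma)$. Write $\Gamma = \Gamma_1 \cup \dots \cup \Gamma_K$ as a finite union of sequences, each uniformly separated with respect to the Euclidean distance; say $\inf\{|\gamma-\gamma'| : \gamma \neq \gamma' \in \Gamma_k\} \ge 2\rho > 0$ for every $k$. Then for each fixed $k$ the disks $D^o_{\rho}(\gamma)$, $\gamma \in \Gamma_k$, are pairwise disjoint. The first step is the pointwise bound: apply part (a) (resp.\ part (b)) of Proposition \ref{bergman-eucl} at each point $\gamma \in \Gamma$ with a fixed radius $r$, giving $|f(\gamma)|^2 e^{-\vp(\gamma)} \le C_r \int_{D^o_r(\gamma)} |f|^2 e^{-\vp}\omega_o$ and similarly for $|d(|f|^2e^{-\vp})(\gamma)|$. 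Summing over $\gamma \in \Gamma$ yields the left-hand inequality in each of (a) and (b) with the same constant $C_r$.

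For the right-hand inequality one must pass from $\sum_{\gamma} \int_{D^o_r(\gamma)}$ to a single integral over $\C$, and this is where uniform separation enters. Fix $k$; since the disks $D^o_{\rho}(\gamma)$ for $\gamma \in \Gamma_k$ are disjoint, a point $\zeta \in \C$ lies in $D^o_r(\gamma)$ only for those $\gamma \in \Gamma_k$ with $|\gamma - \zeta| < r$, and such $\gamma$ have their $\rho$-disks contained in $D^o_{r+\rho}(\zeta)$; comparing areas, the number of such $\gamma$ is at most $(r+\rho)^2/\rho^2 =: N_k$. Hence $\sum_{\gamma \in \Gamma_k} \mathbf{1}_{D^o_r(\gamma)} \le N_k$ pointwise, so $\sum_{\gamma \in \Gamma_k} \int_{D^o_r(\gamma)} |f|^2 e^{-\vp}\omega_o \le N_k \int_{\C} |f|^2 e^{-\vp}\omega_o$. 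Summing over $k = 1, \dots, K$ and setting $\tilde C_r := C_r \sum_{k=1}^K N_k$ gives both right-hand inequalities; the same bound works verbatim for (b) since the integrand there is the same nonnegative function $|f|^2 e^{-\vp}\omega_o$ on the right.

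There is no real obstacle here: the only subtlety is bookkeeping the dependence of the overlap constant on the separation constants $\rho$ and the number $K$ of uniformly separated pieces, but this is harmless because $\Gamma$ and hence these data are fixed throughout. One should note that the constants $\tilde C_r$ depend on $\Gamma$ (through $\rho$ and $K$) as well as on $r$ and $M$, whereas $C_r$ depends only on $r$ and $M$; this is consistent with the statement, which does not claim uniformity in $\Gamma$ for $\tilde C_r$. The argument is purely geometric and uses Proposition \ref{bergman-eucl} as a black box.
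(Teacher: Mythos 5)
Your proof is correct and is exactly the standard argument the paper leaves implicit: the left-hand inequalities follow by summing Proposition \ref{bergman-eucl} over $\gamma$, and the right-hand ones from the finite-overlap count that uniform separation of each piece of $\Gamma$ provides. Nothing further is needed; the dependence of $\tilde C_r$ on the separation radii and the number of pieces is consistent with how the corollary is used later.
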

Finally, we will use the following result.

\begin{lem}\label{unif-1pt-interp-eucl}
Let $\vp \in \sC ^2(\C)$ be a weight function satisfying 
\[
\Delta \vp \ge c\omega _o
\]
for some positive constant $c$.  Then there exists a universal constant $C>0$ such that for any $z\in \C$ there is a function $f \in \sh ^2(\C, e^{-\vp}\omega _o)$ satisfying 
\[
|f(z)|^2e^{-\vp(z)} = 1 \quad \text{and} \quad \int _{\C} |f|^2e^{-\vp} \omega _o \le C/c.
\]
\end{lem}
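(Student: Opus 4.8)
The plan is to obtain the peak function directly from the $L^2$ Extension Theorem \ref{ot-basic}, extending from a single point. Since both the hypothesis $\Delta \vp \ge c\omega _o$ and the Euclidean form $\omega _o$ are translation invariant, it suffices to treat the case $z = 0$; and, replacing $\vp$ by $\vp - \vp (0)$ (which alters neither $\Delta \vp$ nor the conclusion we seek), we may assume $\vp (0) = 0$. Thus we must produce $f \in \co (\C)$ with $|f(0)|^2 = 1$ and $\int _\C |f|^2 e^{-\vp}\omega _o \le C/c$ for some universal $C$.

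I would apply Theorem \ref{ot-basic} with $X = \C$ (which is Stein), with the flat K\"ahler metric $\omega = \omega _o$ (so that ${\rm Ricci}(\omega _o) = 0$), and with $Z := \{0\}$, a smooth divisor. Trivialize $L_Z$ and take the section $T(z) := \tfrac{\sqrt{c}}{2}\,z$ together with the weight $\lambda (z) := \log \left (1 + \tfrac{c}{4}|z|^2 \right )$ for $L_Z$. Then $\sup _\C |T|^2 e^{-\lambda} = \sup _\C \tfrac{(c/4)|z|^2}{1 + (c/4)|z|^2} < 1$, and a direct computation gives $\ii \di \dbar \lambda = \tfrac{(c/4)}{(1 + (c/4)|z|^2)^2}\,\ii\, dz \wedge d\bar z \le \tfrac{c}{4}\,\ii\, dz \wedge d\bar z = \tfrac{c}{2}\,\omega _o$. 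Taking $H \to \C$ to be the trivial bundle with the weight $e^{-\vp}$ and setting $\delta := 1$, the curvature hypotheses of Theorem \ref{ot-basic} become $\Delta \vp \ge \ii \di \dbar \lambda$ and $\Delta \vp \ge 2\, \ii \di\dbar \lambda$, both of which follow from $\Delta \vp \ge c\omega _o \ge 2\,\ii \di \dbar \lambda$; and $\delta = 1$ satisfies the constraint $\delta \le 1$.

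Now I would feed in the datum on $Z$. Since $Z$ is a single point, $H^0(Z,H)$ is just the fibre of $H$ at $0$, and integration over $Z$ against $dA_{\omega _o}$ is evaluation at $0$; taking $f \in H^0(Z,H)$ to be the value $e^{\vp (0)/2} = 1$, the right-hand side of the extension inequality equals $\tfrac{|f|^2 e^{-\vp (0)}}{|dT|^2_{\omega _o}(0)\, e^{-\lambda (0)}} = \tfrac{1}{\kappa\, (c/4)} = \tfrac{4}{\kappa c}$, where $\lambda (0) = 0$ and $\kappa := |dz|^2_{\omega _o}$ is a universal positive constant; in particular this quantity is finite. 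Theorem \ref{ot-basic} then produces $F \in H^0(\C, H) = \co (\C)$ with $F(0) = f$, hence $|F(0)|^2 e^{-\vp (0)} = 1$, and with $\int _\C |F|^2 e^{-\vp}\omega _o \le \tfrac{24\pi}{\delta}\cdot \tfrac{4}{\kappa c} = \tfrac{96\pi}{\kappa}\cdot \tfrac{1}{c}$. Setting $f := F$ and $C := 96\pi / \kappa$ completes the proof.

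The only delicate issue is the bookkeeping of constants: the scale $\sqrt{c}/2$ in $T$ is chosen precisely so that $\ii\di\dbar\lambda \le \tfrac c2\omega _o$, leaving just enough room to absorb the factor $1+\delta$ with $\delta = 1$, while the factor $|dT|^2_{\omega _o}$ in the extension inequality is what produces the decisive $c^{-1}$. It is essential here to use an extension theorem rather than a direct H\"ormander estimate: without an upper bound on $\Delta\vp$ one cannot replace $\vp$ near $0$ by $2\re F$ with $F$ holomorphic and universally controlled error (compare Lemma \ref{weight-centering-eucl}, which does require the two-sided bound), so the $L^2$ mass of the peak function genuinely depends on the full weight and must be controlled through its curvature alone — which is exactly what Theorem \ref{ot-basic} does.
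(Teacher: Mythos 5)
Your proof is correct and takes essentially the same route as the paper: both obtain the peak function by applying the $L^2$ extension theorem (Theorem \ref{ot-basic}) to the one-point divisor $\{z\}$ with a metric $e^{-\lambda}$ for $L_{\{z\}}$ whose curvature is bounded by a fixed fraction of $c\,\omega_o$, so that the factor $|dT|^2_{\omega_o}e^{-\lambda}$ in the denominator delivers the $1/c$. The only difference is the choice of $\lambda$ — you use the explicit weight $\log\left(1+\tfrac{c}{4}|\zeta-z|^2\right)$, while the paper averages $\log|\zeta-z|^2$ over disks of radius $r\sim c^{-1/2}$ — and both choices work.
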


\begin{proof}
Though proofs can be found in many places, we shall give a new one based on the $L^2$ extension theorem.  To this end, consider the holomorphic function $T_z(\zeta) = \zeta -z$ and the function $\lambda_z :\C \to \R$ defined by 
\[
\lambda _z(\zeta) := \frac{1}{\pi r^2} \int _{D^o_r(\zeta)} \log |x -z|^2 \omega _o(x),
\]
seen respectively as a holomorphic section and a singular Hermitian metric for the line bundle on $\C$ associated to the one-point divisor $z$.  Observe that since $\Delta \vp \ge c \omega _o$, for any $\delta > 0$, we can find $r >>0$ such that 
\[
\Delta \vp + {\rm R}(\omega _o) - (1+\delta) \Delta \lambda_z = \Delta \vp - (1+\delta) \Delta \lambda_z \ge (c - 2(1+\delta)r^{-2}) \omega _o \ge 0.
\]
We can therefore apply Theorem \ref{ot-basic} to obtain an extension of the `function' $f : \{z\} \to \R$ defined by 
\[
f(z) := e^{\vp(z)/2}
\]
to a function $F \in \co (\C)$ satisfying 
\[
\int _{\C} |F|^2e^{-\vp} \omega _o \le \frac{C}{|dT_z(z)|^2_{\omega _o}e^{-\lambda _z(z)}} ,
\]
with $C$ independent of $z$.  Now, $|dT_z(z)|^2_{\omega _o} = 1$, and 
\[
\lambda _z(z) = \frac{1}{\pi r^2} \int _{D^o_r(0)} \log |x|^2 \omega _o(x) = \frac{1}{r^2} \int _0 ^{r^2} \log(t) dt = \log r^2 - 1,
\]
This completes the proof.
\end{proof}

\subsection{Jensen Formula}  

We shall make fundamental use of the following weighted analog of the well-known Jensen Formula, which gives a weighted count of the number of zeros of holomorphic functions in disks.  While the weighted version follows rather easily from the unweighted version, we will give a direct and short proof for the reader's convenience.

\begin{d-thm}[Jensen Formula]\label{Jensen-eucl}
Let $f \in \co (\C)$, let $z \in \C$, and let $r > 0$.  Let $a_1,...,a_N$ denote the zeros of $f$ in $D^o_r(z)$, and assume that $f(z) \neq 0$, and that there are no zeros of $f$ on the boundary of the disk $D^o_r (z)$. Then 
\[
 \frac{1}{2\pi} \int _{\di D^o_r(z)} \!\!\!\! \log (|f|^2e^{-\vp}) d\theta_z =  \log \left (|f(z)|^2e^{-\vp(z)}\right )  +  \sum _{j=1} ^N \log \frac{r^2}{|z-a_j|^2} - \frac{1}{2\pi} \int _{D^o_r(z)} \!\!\!\!\log\left ( \frac{r^2}{|\zeta - z|^2}\right ) \Delta \vp(\zeta)
\]
where $\frac{1}{2\pi} d\theta_z$ is the uniformly distributed probability measure on $\di D^o_r(z)$.
\end{d-thm}

\begin{proof}
Recall that $d^c = \frac{\ii}{2}(\dbar - \di)$, so that $dd^c = \Delta$.  Let 
\[
G_z(\zeta) = \log \frac{|\zeta -z|}{r} \quad \text{and} \quad H(\zeta) = \log \left ( \frac{|f(\zeta)|^2e^{-\vp(\zeta)}}{\prod _{j=1} ^N \frac{|\zeta -a_i|^2}{r^2}} \right ).
\]
Note that $d^c G_z = \tfrac{1}{2} d\theta_z$.  By Stokes' Theorem we have 
\begin{equation}\label{green-id-here}
\int _{\di D^o_r(z)} H d^c G_z - G_zd^c H = \int _{D^o_r(z)} H\Delta G_z - G_z \Delta H.
\end{equation}
Now, $G_z|_{\di D^o_r(z)}\equiv 0$ and $\Delta G_z = \pi \delta _z$.  It follows that 
\begin{eqnarray*}
\frac{1}{\pi } \int _{\di D^o_r(z)} \log (|f|^2e^{-\vp}) d^c G_z &=&  \log \left (|f(z)|^2e^{-\vp(z)}\right ) +  \sum _{j=1} ^N   \left (  \log \frac{r^2}{|z-a_i|^2} + 2 \int _{\di D^o_r(z)} G_{a_j} d^c G_z \right )\\
&&  - \frac{1}{\pi}\int _{D^o_r(z)} \log \frac{r}{|\zeta - z|} \Delta \vp(\zeta).
\end{eqnarray*}
But since $G_z|_{\di D^o_r(z)} \equiv 0$, and application of \eqref{green-id-here} with $H= G_{a_j}$ gives
\[
\int _{\di D^o_r(z)} G_{a_j} d^c G_z =  \int _{D^o_r(z)} G_{a_j} \Delta G_z - G_z \Delta G_{a_j} = G_{a_j}(z) - G_z(a_j) = 0,
\]
and thus the result follows.
\end{proof}

\section{Shapiro-Shields Interpolation}\label{SSI}

Strictly speaking, this section of the article is not necessary for the proof of the main result, and may be skipped. However, the discussion ties the problem we are studying to a more classical approach to interpolation in Hilbert spaces of holomorphic functions introduced by Shapiro and Shields.

A Hilbert space of holomorphic functions is a Hilbert space $\sh$ consisting of holomorphic functions on some complex manifold $X$, with the additional property that point evaluation is a bounded linear functional.  The boundedness of point evaluation implies, via the Riesz Representation Theorem, that for each $z \in X$ there is an element $K_z\in \sh$ such that 
\[
f (z) = \left < f|K_z\right >.
\]
In particular, 
\[
K_{\zeta}(z) = \left < K_{\zeta}|K_z\right > = \overline{\left < K_z|K_{\zeta}\right >} = \overline{K_z(\zeta)}.
\]
Moreover, if $||f||=1$ then  
\[
|f(z)|^2 \le  ||K_z||^2 = \left < K_z |K_z\right > = K(z,z) ,
\]
with equality if and only if $f$ is is a unimodular multiple of $K_z$.  Consequently we have the extremal characterization
\begin{equation}\label{bk-ext-char}
K(z,z) = \sup \left \{ |f(z)|^2\ ;\ f \in \sh \text{ and }||f||=1\right \}.
\end{equation}
The function $K(\zeta,z) := K_z (\zeta)$ is often called the {\it kernel function}.

In \cite{ss}, Shapiro and Shields considered the general interpolation problem for Hilbert spaces of holomorphic functions, giving the following definition.

\begin{defn}[Shapiro-Shields Interpolation Sequences]
A sequence $\Gamma \subset X$ is said to be interpolating in the sense of Shapiro-Shields if for each $c = (c_{\gamma}) \in \ell ^2$ there exists $f \in \sh$ such that 
\[
\frac{f(\gamma)}{\sqrt{K(\gamma , \gamma)}} = c_{\gamma}
\]
for all $\gamma \in \Gamma$.
\red
\end{defn}

One can rephrase the problem of whether a sequence is interpolating in the sense of Shapiro-Shields as follows.  Let $\Gamma \subset X$ be a closed discrete subset.  Consider the space 
\[
\ell ^2_K(\Gamma) := \left \{ a : \Gamma \to \C\ ;\ \sum _{\gamma \in \Gamma} \frac{|a(\gamma)|^2}{K(\gamma , \gamma)} < +\infty \right \}.
\]
Then $\Gamma$ is interpolating in the sense of Shapiro-Shields if and only if the map of restriction to $\Gamma$ 
\[
\mathbf{R} _{\Gamma} : \sh \to \ell ^2_K(\Gamma)
\]
is surjective.

Let us now return to our setting.  Our Hilbert space of holomorphic functions is the generalized Bergman space 
\[
\sh = \sh ^2(X, e^{-\vp} \omega).
\]
In our case, $\sh$ is a closed subspace of the Hilbert space
\[
L^2 (X, e^{-\vp}\omega)
\]
of measurable functions $f :X \to \C$ satisfying
\[
\int _X |f|^2 e^{-\vp} \omega < +\infty.
\]
The fact that $\sh^2(X, e^{-\vp}\omega)$ is closed follows from the boundedness of point evaluation and Montel's Theorem.  Consequently, there is a bounded orthogonal projection 
\[
P : L^2 (X, e^{-\vp}\omega) \to \sh ^2 (X, e^{-\vp}\omega),
\]
called the Bergman projection.  By Schwartz's Kernel Theorem, the Bergman projection is an integral operator given by 
\[
(Pf)(z) = \int _X f(\zeta) K(z,\zeta) e^{-\vp}(\zeta) \omega (\zeta).
\]
In particular, since $P$ is the identity on $\sh^2(X, e^{-\vp}\omega)$, $K$ is the Kernel function for $\sh$.

We then have the following well-known proposition.

\begin{prop}\label{ss=us}
Let $(X, \omega)$ be an asymptotically flat finite Riemann surface.  Suppose the weight function $\vp$ satisfies the curvature hypothesis \eqref{curv-bd-gen}.  Then there exists a positive constant $C$ such that 
\begin{equation}\label{bk-est}
C^{-1} \le K(z,z) e^{-\vp(z)} \le C.
\end{equation}
In particular, the two Hilbert spaces $\ell ^2_K(\Gamma)$ and $\ell ^2 (\Gamma , e^{-\vp})$ are quasi-isomorphic as Hilbert spaces, and equal as subsets of $\co (X)$.
\end{prop}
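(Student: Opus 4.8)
The plan is to establish the two-sided pointwise bound \eqref{bk-est} on the Bergman kernel along the diagonal, since the remaining assertions about $\ell^2_K(\Gamma)$ and $\ell^2(\Gamma, e^{-\vp})$ follow immediately: the defining norms differ by the factor $K(\gamma,\gamma)e^{-\vp(\gamma)}$, which \eqref{bk-est} forces to lie in $[C^{-1},C]$, so the two weighted $\ell^2$ spaces have comparable norms and hence coincide as sets. So everything reduces to proving \eqref{bk-est}.

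For the upper bound, I would use the extremal characterization \eqref{bk-ext-char}: $K(z,z) = \sup\{|f(z)|^2 : f\in\sh,\ \|f\|=1\}$. By the curvature hypothesis \eqref{curv-bd-gen} we have $\ii\di\dbar\vp + {\rm R}(\omega) \le M\omega$, and since $\omega$ is asymptotically flat, ${\rm R}(\omega)$ is compactly supported; hence $\Delta\vp$ is bounded above and below by constant multiples of $\omega$ on each asymptotically flat end (after pulling back by the isometries $F_i$ to the Euclidean or, via the covering map, cylindrical model), and is bounded on the compact core $K$. Thus the generalized Bergman inequality, Proposition \ref{bergman-eucl}(a), applies locally in a fixed-radius geodesic ball around any point: $|f(z)|^2 e^{-\vp(z)} \le C_r \int_{B_r(z)} |f|^2 e^{-\vp}\omega \le C_r \|f\|^2$. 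Taking the supremum over unit-norm $f$ gives $K(z,z)e^{-\vp(z)} \le C_r$. (On the compact core one can alternatively just use ordinary interior elliptic/subharmonicity estimates, since there $\vp$ and $\omega$ are uniformly controlled.)

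For the lower bound, I would use the hypothesis $\ii\di\dbar\vp + {\rm R}(\omega) \ge m\omega$: again since ${\rm R}(\omega)$ is compactly supported, $\Delta\vp \ge c\,\omega$ for some $c>0$ outside a compact set, and on the compact core $\Delta\vp$ is bounded below by a constant, so after possibly shrinking $c$ we get $\Delta\vp \ge c\,\omega$ everywhere (this is where I pass, on each Euclidean end, to the Euclidean picture, and on each cylindrical end to the Euclidean picture on the universal cover where the pulled-back weight still has Laplacian bounded below). Then Lemma \ref{unif-1pt-interp-eucl} produces, for each $z$, a function $f\in\sh$ with $|f(z)|^2 e^{-\vp(z)}=1$ and $\|f\|^2 \le C/c$; normalizing and invoking \eqref{bk-ext-char} gives $K(z,z)e^{-\vp(z)} \ge c/C$. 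On the cylindrical ends one should check that a one-point interpolant on the cover that is suitably averaged (or simply the cover interpolant restricted appropriately) descends to a genuine element of $\sh^2(X,e^{-\vp}\omega)$ with controlled norm; since the cover is a $\Z$-cover and we only need one point, this is routine — one can instead work directly on $X$ near a cylindrical end using the same $L^2$-extension argument as in Lemma \ref{unif-1pt-interp-eucl}, which only uses $\Delta\vp \ge c\,\omega$ and the existence of a holomorphic defining function for a one-point divisor, both available on any finite Riemann surface.

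The main obstacle I anticipate is not any single estimate but the bookkeeping of transferring the local Euclidean-model estimates (Proposition \ref{bergman-eucl} and Lemma \ref{unif-1pt-interp-eucl}) uniformly across all three regions — the two types of ends and the compact core — and in particular handling the cylindrical ends, where the natural model is the cover and one must confirm that the constants do not degenerate near the puncture and that cover constructions descend. This is purely a matter of patching finitely many uniform local estimates, so while it requires care it presents no essential difficulty; the curvature hypothesis \eqref{curv-bd-gen} together with the compact support of ${\rm R}(\omega)$ is exactly what makes the Laplacian of $\vp$ comparable to $\omega$ uniformly, which is all the two cited results need.
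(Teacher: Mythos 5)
Your upper bound is fine and is exactly the paper's argument: reduce to the ends (the compact core being trivial), transfer cylindrical ends to the Euclidean picture via the covering map, and combine Proposition \ref{bergman-eucl}(a) with the extremal characterization \eqref{bk-ext-char}. The gap is in the lower bound. Lemma \ref{unif-1pt-interp-eucl} produces a function in $\sh ^2(\C, e^{-\vp}\omega _o)$, i.e.\ an entire function on the whole plane (or, in the cylindrical case, on the universal cover), whereas each end of $X$ is only isometric to the \emph{complement of a disk} in $\C$ or in the cylinder. A function built on the full model does not restrict or descend to an element of $\sh ^2(X, e^{-\vp}\omega)$, and the extremal characterization \eqref{bk-ext-char} requires genuine global holomorphic functions on $X$; restricting attention to an end helps for the upper bound but not for the lower one. (Your ``descend from the cover'' remark suffers the same defect and is, in addition, not routine: a Poincar\'e-series average needs decay estimates you do not supply.) What is actually needed here is a uniform one-point interpolation statement \emph{on $X$ itself}, with constants independent of $z$; this is precisely Lemma \ref{unif-1pt-interp-gen}, which the paper proves by H\"ormander's theorem with a cutoff in a local chart $g:U\to\D$ and the weight-recentering Lemma \ref{weight-centering-eucl} (this is where the upper curvature bound in \eqref{curv-bd-gen} gets used), and which you never invoke or prove.

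Your fallback suggestion --- run the $L^2$ extension argument of Theorem \ref{ot-basic} directly on $X$ for the one-point divisor $\{z\}$ --- is viable (the paper notes this parenthetically, and then the upper bound in \eqref{curv-bd-gen} is not needed), but it is not ``routine'' as written: one must construct, uniformly in $z$, a section $T_z$ and a singular metric $e^{-\lambda _z}$ for the divisor line bundle on all of $X$ with $|T_z|^2e^{-\lambda _z}\le 1$, with $\ii(\di\dbar\vp + {\rm Ricci}(\omega)) \ge (1+\delta)\,\ii\di\dbar \lambda _z$, and with a lower bound on $|dT_z(z)|^2_{\omega}e^{-\lambda _z(z)}$ independent of $z$; this cut-off construction is the substance of the lemma (the paper only carries out such a construction later, in Section \ref{main-section}), and your sketch does not address it. Finally, your preliminary reduction ``$\Delta\vp \ge c\,\omega$ everywhere'' is false as stated: on the compact core ${\rm R}(\omega)$ need not be $\le 0$, so \eqref{curv-bd-gen} only gives $\ii\di\dbar\vp + {\rm R}(\omega) \ge m\omega$ and $\Delta\vp$ may well be negative there. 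This is harmless only if you keep the combined curvature term, which is what both the H\"ormander and the Ohsawa--Takegoshi arguments actually require.
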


\noi Thus we see that our notion of interpolation sequences agrees with the notion introduced by Shapiro and Shields, which lends further justification for our choice of the definition of the Hilbert space $\ell ^2 (\Gamma, e^{-\vp})$.

\medskip

In the proof of Proposition \ref{ss=us} we shall make use of the following global version of Lemma \ref{unif-1pt-interp-eucl} (with slightly stronger curvature hypotheses).

\begin{lem}\label{unif-1pt-interp-gen}
Let $(X, \omega)$ be an asymptotically flat finite Riemann surface, and let $\vp \in \sC ^2(X)$ be a weight function satisfying the curvature hypothesis \eqref{curv-bd-gen}.  Then there exists a constant $C>0$ such that for any $z\in X$ there is a function $f \in \sh ^2(X, e^{-\vp}\omega )$ satisfying 
\[
|f(z)|^2e^{-\vp(z)} = 1 \quad \text{and} \quad \int _{X} |f|^2e^{-\vp} \omega  \le C.
\]
\end{lem}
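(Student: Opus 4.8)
The plan is to reduce the global statement on the asymptotically flat surface $(X,\omega)$ to the Euclidean model case already settled in Lemma \ref{unif-1pt-interp-eucl}, using the $L^2$ extension theorem (Theorem \ref{ot-basic}) together with the curvature hypothesis \eqref{curv-bd-gen}. The key observation is that \eqref{curv-bd-gen} says precisely that $\ii\di\dbar\vp + {\rm R}(\omega) \geq m\omega$, i.e. the \emph{twisted} curvature $\ii(\di\dbar\vp + {\rm Ricci}(\omega))$ is bounded below by a fixed positive multiple of $\omega$. This is exactly the positivity needed to run the single-point extension argument of Lemma \ref{unif-1pt-interp-eucl}, now carried out on the Stein surface $X$ (note $X$ is Stein, being an open Riemann surface). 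So the structure of the proof should mirror that of Lemma \ref{unif-1pt-interp-eucl} almost verbatim.

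First I would fix $z \in X$ and take a holomorphic function $T_z \in \co(X)$ vanishing to order $1$ at $z$ with no other zeros in a suitable neighborhood — more precisely, I want a holomorphic section $T_z$ of the line bundle $L_{\{z\}}$ associated to the one-point divisor $\{z\}$, which exists since $X$ is Stein, together with a smooth metric $e^{-\lambda_z}$ for $L_{\{z\}}$ normalized so that $\sup_X |T_z|^2 e^{-\lambda_z} \le 1$. One can build $\lambda_z$ by taking a logarithmic pole concentrated near $z$ (as in the Euclidean model, $\lambda_z(\zeta) = \frac{1}{\pi r^2}\int_{D^o_r(\zeta)}\log|x-z|^2_{\rm loc}\,\omega_o(x)$ in a fixed coordinate chart around $z$, cut off away from $z$) so that $\ii\di\dbar\lambda_z$ is supported in a small fixed-size coordinate disk about $z$ and is bounded there, uniformly in $z$, by some constant times $\omega$. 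The uniformity over $z \in X$ is where one uses that $(X,\omega)$ is asymptotically flat: outside the compact core $K$ the surface looks like finitely many standard flat ends, so a fixed geodesic ball around any point carries uniformly controlled geometry, and the local model metric and cutoff can be chosen uniformly.

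Next, with $\delta \in (0,1]$ fixed (say $\delta = 1$) I would choose the size of the pole of $\lambda_z$ small enough — equivalently, spread the logarithmic mass over a large enough averaging disk, as in Lemma \ref{unif-1pt-interp-eucl} — so that
\[
\ii(\di\dbar\vp + {\rm Ricci}(\omega)) - (1+\delta)\ii\di\dbar\lambda_z \ \geq\ \big(m - (1+\delta)\cdot(\text{const})r^{-2}\big)\,\omega \ \geq\ 0,
\]
using $\ii\di\dbar\vp + {\rm R}(\omega) \geq m\omega$ from \eqref{curv-bd-gen}; both curvature inequalities required by Theorem \ref{ot-basic} then hold. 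Applying Theorem \ref{ot-basic} with $H$ the trivial bundle carrying the metric $e^{-\vp}$, and extending the ``section over $\{z\}$'' given by the value $e^{\vp(z)/2}$, produces $F \in \co(X)$ with $F(z) = e^{\vp(z)/2}$, hence $|F(z)|^2 e^{-\vp(z)} = 1$, and with
\[
\int_X |F|^2 e^{-\vp}\,\omega \ \leq\ \frac{24\pi}{\delta}\cdot\frac{|F(z)|^2 e^{-\vp(z)}}{|dT_z(z)|^2_\omega e^{-\lambda_z(z)}}.
\]
It remains to check the denominator $|dT_z(z)|^2_\omega e^{-\lambda_z(z)}$ is bounded below by a positive constant independent of $z$; this follows from the uniform choice of local coordinates and of $\lambda_z$ near each $z$, again invoking asymptotic flatness for points near the ends and compactness for points in the core. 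Normalizing $f := F$ gives the claim.

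The main obstacle I anticipate is precisely this uniformity in $z$: making sure that the auxiliary section $T_z$, the metric $\lambda_z$, the size of its pole, and the resulting lower bound on $|dT_z(z)|^2_\omega e^{-\lambda_z(z)}$ can all be chosen with constants independent of $z$, simultaneously for $z$ in the compact core and $z$ heading out the (cylindrical or Euclidean) ends. This is routine but requires a careful patching argument exploiting the explicit flat structure of the ends; everything else is a direct transcription of the proof of Lemma \ref{unif-1pt-interp-eucl}.
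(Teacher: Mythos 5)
Your overall strategy (single-point $L^2$ extension via Theorem \ref{ot-basic}) is viable --- the paper itself remarks that such a proof exists and would even render the upper bound in \eqref{curv-bd-gen} unnecessary --- but the step you dismiss as routine is where your construction actually breaks. Theorem \ref{ot-basic} requires $\ii(\di\dbar\vp+{\rm Ricci}(\omega))\ge(1+\delta)\,\ii\di\dbar\lambda_z$, and under \eqref{curv-bd-gen} the left side is only guaranteed to lie between $m\omega$ and $M\omega$; so you must arrange $\ii\di\dbar\lambda_z\le\tfrac{m}{1+\delta}\omega$ wherever it is positive. If, as you propose, $\lambda_z$ is an average of $\log|\cdot-z|^2$ over a coordinate disk of radius $r$ about $z$ (cut off outside), then $\ii\di\dbar\lambda_z\approx\tfrac{2}{r^2}\omega$ on that disk, so you need $r\gtrsim\sqrt{2(1+\delta)/m}$, i.e.\ a genuinely large flat disk centered at $z$. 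Such disks exist only in the Euclidean ends. For $z$ in the compact core the admissible radius is bounded by the chart geometry, and for $z$ far out a cylindrical end the injectivity radius is at most about $\pi$; in either case, when $m$ is small your $\lambda_z$ violates the curvature hypothesis, and cutting off does not help (the cutoff only adds further curvature terms, which are not small unless they also carry a $1/r^2$ normalization). The repairs require genuine input: in a cylindrical end one must average $\log|\fp^*T_z|^2$ on the universal cover and push down (the covered mean --- exactly how the paper handles Lemma \ref{unif-1pt-interp-cyl}), and for core points one must either spread the curvature of $\lambda_z$ out into the ends using cutoffs carrying the $\tfrac{1}{\pi r^2}$ normalization and then renormalize $T_z$ by a uniform additive constant so that $|T_z|^2e^{-\lambda_z}\le1$ (the device of Section \ref{main-section}), or treat the compact core by a separate argument (e.g.\ positivity and continuity of $K(z,z)e^{-\vp(z)}$ on compact sets). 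Once this is done, the uniform lower bound on $|dT_z(z)|^2_{\omega}e^{-\lambda_z(z)}$ is indeed as routine as you say, in the spirit of Proposition \ref{denom-bds-eucl} and Remark \ref{uniformity-of-denom-bd}.

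For comparison, the paper's own proof of the lemma avoids this issue by taking a different route: it fixes a unit-scale chart $g:U\to\D$ around $z$ with $|dg|_{\omega}$ uniformly controlled, uses the upper bound in \eqref{curv-bd-gen} through the holomorphic recentering Lemma \ref{weight-centering-eucl} to get $F\in\co(U)$ with $|2\re F-\vp+\vp(z)|\le C$, forms the compactly supported candidate $e^{\vp(z)/2}e^{F}\chi\circ g^{-1}$, and corrects it by solving $\dbar u=\dbar\tilde f$ via H\"ormander's theorem against the singular weight $\vp+\log|g|^2$; the lower bound $m\omega$ enters only through H\"ormander, never through a comparison with the curvature of an auxiliary metric, so no large flat disks are needed. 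Your route buys a weaker hypothesis (no upper curvature bound) at the price of the covered-mean and end-spreading constructions above; the paper's buys a short uniform argument at the price of using both bounds in \eqref{curv-bd-gen}.
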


\begin{proof}
Again, this result is not original; a proof based on Ohsawa-Takegoshi can be given in this case as well (and then the upper curvature bound in \eqref{curv-bd-gen} is not needed), but for the sake of variety we shall give the more traditional proof, introduced in \cite{quimbo}, using H\"ormander's Theorem and the holomorphic recentering of weights discussed in Paragraph \ref{holo-recentering}.  

Fix $z \in X$.  Because $(X,\omega)$ is asymptotically flat, we can find an open subset $U$ containing $z$ such that $\int _U \omega$ is bounded independent of $z$, and a biholomorphism $g : U \to \D$ to the unit disk, such that 
\[
g(z) =0 \quad \text{and} \quad \frac{1}{C_o} \le |dg|_{\omega} \le C_o \text{ on }U,
\]
where $C_o$ is a constant that is independent of $z$.  Moreover, since $\ii \di \dbar (g^* \vp) = g^* \ii \di \dbar \vp$, Lemma \ref{weight-centering-eucl} provides a function $F \in \co (U)$ satisfying 
\[
F(z) = 0 \quad \text{and} \quad  \left |2 \re F - \vp + \vp (z)\right | \le C_1
\]
for some constant $C_1$ independent of $z$.  (It is here that we need the upper curvature bounds in \eqref{curv-bd-gen}.)

Let $\chi \in \sC ^{\infty} _0(\D)$ with $\chi \equiv 1$ on $\frac{1}{2} \D$ and $0 \le \chi \le 1$.  Then the function 
\[
\tilde f  = e^{\vp(z)/2} e^F \chi \circ g^{-1} 
\]
has support in $U$ and satisfies $|\tilde f(z)|^2e^{-\vp(z)}=1$, and we have the estimate 
\[
\int _X |\tilde f|^2e^{-\vp} \le  \int _U e^{\vp (z) + 2\re F - \vp} \omega \le c_o
\]
where $c_o$ is independent of $z$.  

Next, since $\chi ' \equiv 0$ on $\frac{1}{2} \D$, we estimate that
\[
\int _{X} \frac{|\dbar \tilde f|^2}{|g|^2}e^{-\vp}\omega \lesssim \int _U e^{\vp (z) +2 \re F - \vp} |dg|^{-2}_{\omega} |\chi '\circ g^{-1}|^2 \omega \le c_1,
\]
where $c_1$ is again independent of $z$.  Since 
\[
\ii \di \dbar \vp + {\rm R}(\omega) \ge m \omega,
\]
H\"ormander's Theorem provides a function $u: X \to \C$ such that $\dbar u = \dbar \tilde f$ and 
\[
\int _X |u|^2 e^{-\vp} \omega \le \int _X \frac{|u|^2e^{-\vp}}{|g|^2} \omega \le c_2,
\]
with $c_2$ again independent of $z$.  In particular, in conjunction with the elliptic regularity of $\dbar$, the second inequality implies that $u(z) = 0$.  Finally, we set 
\[
f = \tilde f - u.
\]
Then 
\[
|f(z)|^2 e^{-\vp(z)} = |\tilde f (z)|^2 e^{-\vp(z)} = 1,
\]
and 
\[
\int _X |f|^2 e^{-\vp} \omega \le 2\left ( \int _X |\tilde f|^2 e^{-\vp} \omega + \int _X |u|^2 e^{-\vp} \omega\right )  \le C
\]
where $C$ is independent of $z$.  The proof is complete.
\end{proof}

\begin{proof}[Proof of Proposition \ref{ss=us}]
Since $(X,\omega)$ is asymptotically flat and the inequality holds trivially on any relatively compact domain $D \relcomp X$ (with the constant depending on $D$) it suffices to show the estimate \eqref{bk-est} at the ends.  Then we are in two possible cases:  (i) The point $z$ lies in a Euclidean end, and (ii) the point $z$ lies in a cylindrical end.  In fact, case (ii) can be reduced to case $(i)$ because the universal convering map is an isometry between the cylindrical and Euclidean metrics, and the pullback of $\vp$ by the universal covering map satisfying the curvature hypothesis \eqref{curv-bd-gen}.  Note, moreover, that since $\omega$ is zero outside a compact set, the hypothesis \eqref{curv-bd-gen} becomes $m \omega _o \le \vp \le M\omega _o$.  Consequently Proposition \ref{bergman-eucl}(a) and the extremal characterization \eqref{bk-ext-char} of the Bergman kernel imply that $K(z,\bar z) e^{-\vp(z)} \le C_1$.  Finally, the inequality $K(z,\bar z) e^{-\vp(z)} \ge C_2$ is an immediate consequence of Lemma \ref{unif-1pt-interp-gen}.  
\end{proof}

\section{Interpolation in $(\C, \omega_o)$}\label{Eucl-section}

\subsection{The interpolation theorem}
Recall that 
\[
\A ^o_r(z) := \{ \zeta \in \C\ ;\ 1 < |\zeta - z| < r \}.
\]
In this section we establish the following result. 

\begin{d-thm}\label{char-eucl}
Let $\vp \in \sC ^2 (\C)$ be a weight function satisfying 
\[
0 \le \Delta \vp \le M\omega _o \quad \text{and} \quad \frac{1}{\pi r^2} \int _{D^o_r(z)} \log \frac{r^2}{|\zeta -z|^2} \Delta \vp (\zeta) \ge m 
\]
for some positive constants $M$ and $m$, and let $\Gamma \subset \C$ be a closed discrete subset.  Then the restriction map $\sr_{\Gamma} :\sh ^2 (\C , e^{-\vp}\omega _o ) \to \ell ^2 (\Gamma, e^{-\vp})$ is surjective if and only if 
\begin{enumerate}
\item[(i)] $\Gamma$ is uniformly  separated with respect to the Euclidean distance, and 
\item[(ii)] the upper density 
\[
D^+_{\vp} (\Gamma) := \limsup _{r \to \infty} \sup _{z \in \C} \frac{2\pi \int _{\A^o_r(z)} \log \frac{r^2}{|\zeta-z|^2} \delta _{\Gamma}}{\int _{D^o_r(z)} \log \frac{r^2}{|\zeta-z|^2}\Delta \vp(\zeta)}
\]
satisfies $D^+_{\vp}(\Gamma) < 1$.  
\end{enumerate}
\end{d-thm}

\begin{s-rmk}
The sufficiency of conditions (i) and (ii) follows from work of the author and V. Pingali, which we will recall below, giving a slightly simpler proof in the present setting. The necessity of conditions (i) and (ii) were essentially established by Ortega Cerd\`a and Seip \cite{quimseep}, and we will adapt their methods here.
\red
\end{s-rmk}

\begin{s-rmk} 
In line with the remark following Definition \ref{Eucl-density-defn}, Theorem \ref{char-eucl} with $D^+_{\vp}(\Gamma)$ replaced by the non-logarithmic version of the density, as given, for example, in Theorem \ref{quimbo-thm} above.  Essentially, for sufficiency one simply removes the logarithms.   For necessity, one has to establish the above result, and use the argument alluded to by Ortega Cerd\`a and Seip to show that the two densities are the same in the Euclidean case.
\red
\end{s-rmk}

It is useful to define the {\it Euclidean separation radius} 
\[
R^o_{\Gamma} := \inf\left  \{ \frac{|\gamma_1-\gamma _2|}{2}\ ;\ \gamma _1 , \gamma _2 \in \Gamma,\ \gamma _1 \neq \gamma _2\right \}
\]
of $\Gamma$.  Of course, the Euclidean separation radius of $\Gamma$ is positive if and only if $\Gamma$ is uniformly separated in the Euclidean distance.

\subsection{Weights and density}

We will use the fact that 
\begin{equation}\label{avg-of-log}
\int _{D^o_r(0)} \log \frac{r^2}{|\zeta|^2} \omega _o(\zeta) = \pi r^2.
\end{equation}

\begin{prop}\label{avged-wt}
Let $\vp \in \sC ^2 (\C)$ be a weight function satisfying 
\[
- M\omega _o \le \Delta \vp \le M \omega _o,
\]
and let 
\[
\vp _r (z) :=  \frac{1}{\pi r^2}\int _{D^o_r(z)} \vp (\zeta) \log \frac{r^2}{|\zeta -z|^2} \omega _o (\zeta)= \frac{1}{\pi r^2} \int _{D^o_r(0)}  \vp (\zeta+z) \log \frac{r^2}{|\zeta|^2}\omega _o(\zeta) , \qquad z \in\C.
\]
Then 
\[
-  M \omega _o \le \Delta \vp_r \le M  \omega _o,
\]
and there is a constant $C_r>0$ such that for all $z \in \C$, 
\[
|\vp (z) - \vp _r(z)| \le C_r.
\]
In particular, we have the following quasi-isometries 
\[
\sh ^2 (\C , e^{-\vp}\omega _o) \asymp \sh ^2 (\C , e^{-\vp_r} \omega _o) \quad \text{and} \quad \ell ^2(\Gamma, e^{-\vp}) \asymp \ell ^2 (\Gamma, e^{-\vp_r}).
\]
of Hilbert spaces given by the identity map.
\end{prop}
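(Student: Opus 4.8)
The plan is to prove the three assertions in turn, all of them resting on elementary properties of the logarithmic averaging operator together with the identity \eqref{avg-of-log}. It is convenient to work with the translation-invariant form $\vp_r(z)=\frac{1}{\pi r^2}\int_{D^o_r(0)}\vp(\zeta+z)\log\frac{r^2}{|\zeta|^2}\omega_o(\zeta)$ and to note that, since $\log\frac{r^2}{|\zeta|^2}\ge 0$ on $D^o_r(0)$, the measure $\mu_r:=\frac{1}{\pi r^2}\log\frac{r^2}{|\zeta|^2}\,\omega_o(\zeta)$ is, by \eqref{avg-of-log}, a probability measure on $D^o_r(0)$. Because $\vp\in\sC^2(\C)$ and the kernel is integrable near the origin, one may differentiate twice under the integral sign; using the translation invariance of $\omega_o$ this shows that $\vp_r\in\sC^2(\C)$ and $\Delta\vp_r(z)=\int_{D^o_r(0)}(\Delta\vp)(\zeta+z)\,d\mu_r(\zeta)$. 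Writing $\Delta\vp=h\,\omega_o$ with $-M\le h\le M$, the right-hand side equals $\big(\int_{D^o_r(0)}h(\zeta+z)\,d\mu_r(\zeta)\big)\omega_o$, and since $\mu_r$ is a probability measure this average again lies in $[-M,M]$; hence $-M\omega_o\le\Delta\vp_r\le M\omega_o$.

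For the uniform bound $|\vp-\vp_r|\le C_r$, I would use that $\mu_r$ is a probability measure to write $\vp_r(z)-\vp(z)=\int_{D^o_r(0)}\big(\vp(\zeta+z)-\vp(z)\big)\,d\mu_r(\zeta)$, then pass to polar coordinates $\zeta=se^{\ii\theta}$ and control the circular mean $A(s):=\frac{1}{2\pi}\int_0^{2\pi}\vp(z+se^{\ii\theta})\,d\theta$. The classical submean-value identity gives $s\,\tfrac{d}{ds}A(s)=c\int_{D^o_s(z)}\Delta\vp$ for a universal constant $c$, and since $-M\omega_o\le\Delta\vp\le M\omega_o$ this yields $|A'(s)|\le cM\pi s$, hence $|A(s)-\vp(z)|=|A(s)-A(0)|\le\tfrac12 cM\pi s^2$. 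Rewriting $\vp_r(z)-\vp(z)=\tfrac{2}{r^2}\int_0^r s\log\tfrac{r^2}{s^2}\big(A(s)-\vp(z)\big)\,ds$ (the radial density $\tfrac{2}{r^2}s\log\tfrac{r^2}{s^2}\,ds$ has total mass $1$ by \eqref{avg-of-log}) and inserting the bound on $A(s)-\vp(z)$ gives $|\vp_r(z)-\vp(z)|\le C_r$ with $C_r$ a constant multiple of $Mr^2$, independent of $z$. Alternatively, one can cite Lemma \ref{weight-centering-eucl}: on $D^o_r(z)$ write $\vp=2\re F+\rho$ with $F$ holomorphic, $F(z)=0$ and $|\rho-\vp(z)|\le C$; since $2\re F$ is harmonic its logarithmic average over $D^o_r(z)$ equals its central value $2\re F(z)=0$, so averaging this decomposition leaves only the bounded error $\rho$.

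Finally, the quasi-isometries are immediate consequences of $|\vp-\vp_r|\le C_r$. That bound gives the pointwise comparison $e^{-C_r}e^{-\vp}\le e^{-\vp_r}\le e^{C_r}e^{-\vp}$, so for every $g\in\co(\C)$ the quantities $\int_\C|g|^2e^{-\vp}\omega_o$ and $\int_\C|g|^2e^{-\vp_r}\omega_o$ differ by a factor at most $e^{C_r}$, and likewise the sums $\sum_{\gamma\in\Gamma}|g(\gamma)|^2e^{-\vp(\gamma)}$ and $\sum_{\gamma\in\Gamma}|g(\gamma)|^2e^{-\vp_r(\gamma)}$; since the underlying spaces of holomorphic functions (resp.\ of sequences on $\Gamma$) coincide, the identity map realizes the asserted equivalences.

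There is no genuine obstacle in this proposition. The only mildly substantive point is the second-order estimate $|A(s)-\vp(z)|\lesssim Ms^2$ used in the second step, and even that is routine given the formula for $\tfrac{d}{ds}A(s)$ in terms of $\int_{D^o_s(z)}\Delta\vp$ — or it can be bypassed entirely by invoking Lemma \ref{weight-centering-eucl}.
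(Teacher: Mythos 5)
Your proposal is correct, and for the central estimate $|\vp-\vp_r|\le C_r$ your primary argument takes a genuinely different route from the paper. The paper invokes Lemma \ref{ddbar-est-eucl} to produce, on $D^o_{2r}(z)$, a potential $u$ with $\Delta u=\Delta\vp$ and $\sup_{D^o_r(z)}|u|\le C_r/2$ uniformly in $z$; it then writes $\vp = h_z+u+(\vp(z)-u(z))$ with $h_z$ harmonic and vanishing at $z$, observes that the radially symmetric logarithmic kernel averages $h_z$ to its central value $0$, and is left with the bounded term $u(\zeta)-u(z)$. Your second, alternative argument via Lemma \ref{weight-centering-eucl} (harmonic part $2\re F$ killed by the mean value property, bounded remainder) is essentially this same decomposition, since that lemma is itself a repackaging of Lemma \ref{ddbar-est-eucl}. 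Your first argument, by contrast, bypasses the potential-theoretic lemma entirely: it exploits only the two-sided bound on $\Delta\vp$ through the derivative identity for circular means, $s\,\tfrac{d}{ds}A(s)=c\int_{D^o_s(z)}\Delta\vp$, which gives $|A(s)-\vp(z)|\lesssim Ms^2$, and then integrates against the radial density $\tfrac{2}{r^2}s\log\tfrac{r^2}{s^2}\,ds$, whose total mass is $1$ by \eqref{avg-of-log}. This is more elementary and self-contained, and it yields the explicit dependence $C_r\lesssim Mr^2$; the paper's route has the advantage of reusing machinery (Lemmas \ref{ddbar-est-eucl} and \ref{weight-centering-eucl}) that is needed elsewhere anyway. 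Your treatment of the Laplacian bounds (differentiation under the integral in the translated form, with the normalized kernel a probability measure by \eqref{avg-of-log}) and of the resulting quasi-isometries of $\sh^2$ and $\ell^2$ spaces coincides with the paper's, which treats those points as immediate.
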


\begin{proof}
The estimates for $\Delta \vp _r$ are clear from \eqref{avg-of-log} and the second integral formula for $\vp _r$.  Next, by Proposition \ref{ddbar-est-eucl} there is a function $u \in \sC ^{2}(D^o_{2r}(z))$ such that 
\[
\Delta u = \Delta \vp  \quad \text{and} \quad \sup _{D^o_r(z)} |u| \le \frac{C_r}{2},
\]
with $C_r$ independent of $z$.  Let 
\[
h_z (\zeta) := \vp  (\zeta) - u(\zeta) - (\vp (z) - u(z)).
\]
Then $h_z$ is harmonic in $D^o_2(z)$ and vanishes at $z$.  It follows that 
\begin{eqnarray*}
\left | \vp (z) - \frac{1}{\pi r^2} \int _{D^0_r(z)}\vp (\zeta) \log \frac{r^2}{|\zeta -z|^2} \omega _o (\zeta)\right | &=&\left |\frac{1}{\pi r^2} \int _{D^o_r(z)} (h_z(\zeta) + u(\zeta) - u(z)) \log \frac{r^2}{|\zeta -z|^2}\omega _o(\zeta) \right | \\
&=&\left |\frac{1}{\pi r^2} \int _{D^o_r(z)}  (u(\zeta) - u(z)) \log \frac{r^2}{|\zeta -z|^2} \omega _o(\zeta) \right | \\
&\le & C_r,
\end{eqnarray*}
as claimed.
\end{proof}

Recall that 
\[
c_r := \int _{\A^o_r(0)} \log \frac{r^2}{|\zeta|^2}\omega _o(\zeta)
\]
and that  
\begin{equation}\label{lambda-eucl}
\lambda _r ^T (z) :=  \frac{1}{c_r}\int _{\A^o_r(z)} \log |T(\zeta)|^2 \log \frac{r^2}{|\zeta -z|^2} \omega _o (\zeta) = \frac{1}{c_r} \int _{\A^o_r(0)} \log |T(\zeta +z)|^2 \log \frac{r^2}{|\zeta|^2}\omega _o (\zeta).
\end{equation}

\begin{prop}\label{geometric-invariants}
Fix $T \in \co (\C)$ such that ${\rm Ord}(T) = \Gamma$.
\begin{enumerate}
\item[(a)] The functions
\[
\sigma ^{\Gamma}_r := |T|^2 e^{-\lambda ^T_r} : \C \to (0,\infty) \quad \text{and} \quad S^{\Gamma}_r := |dT|^2_{\omega _o} e^{-\lambda ^T_r} : \Gamma \to (0,\infty),
\]
and the $(1,1)$-form 
\[
\Upsilon ^{\Gamma}_r := \frac{1}{2\pi} \Delta \lambda ^T_r, 
\]
are independent of the choice of $T$.  In fact, $\sigma ^{\Gamma}_r (z)$ and $S^{\Gamma}_r(\gamma)$ depend only on the finite sets $\Gamma \cap D^o_r(z)$ and $\Gamma \cap D^o_r(\gamma)$ respectively.
\item[(b)]  The inequality $\sigma ^{\Gamma}_r \le 1$ holds.  Moreover, $\sigma ^{\Gamma}_r (z) = 1$ as soon as $D^o_r(z) \cap \Gamma = \emptyset$.
\item[(c)] For any $\gamma \in \Gamma$ and any $z \in D^o_{R^o _{\Gamma}}(\gamma)$ such that $|z-\gamma| > \ve$, we have the estimate 
\[
\sigma ^{\Gamma} _r (z) \ge C_r \ve ^2.
\]
On the other hand, $\frac{1}{\sigma ^{\Gamma}_r}$ is not locally integrable in any neighborhood of any point of $\Gamma$.
\item[(d)] One has the formula 
\[
\Upsilon ^{\Gamma}_r (z) =  \frac{1}{c_r} \left ( \int_{\A^o_r(z)} \log \frac{r^2}{|\zeta-z|^2}\delta _{\Gamma}(\zeta)\right ) \omega _o(z).
\]
\end{enumerate}
\end{prop}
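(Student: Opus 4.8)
The backbone of every part of the proposition is an \emph{averaging identity}: for $r>1$ and $z\in\C$, if $h$ is harmonic on the open disk $D^o_r(z)$ then
\[
\frac{1}{c_r}\int_{\A^o_r(z)} h(\zeta)\,\log\frac{r^2}{|\zeta-z|^2}\,\omega_o(\zeta) = h(z),
\]
and if $u$ is subharmonic on $D^o_r(z)$ the left-hand side is $\ge u(z)$. I would prove this by passing to polar coordinates about $z$, so the integral becomes $\frac{2\pi}{c_r}\int_1^r \ca(s)\,\log\frac{r^2}{s^2}\,s\,ds$, where $\ca(s)$ is the circular mean of $h$ (resp.\ $u$) over $\{|\zeta-z|=s\}$; the mean-value property gives $\ca(s)\equiv h(z)$ for $s<r$ in the harmonic case and $\ca(s)\ge u(z)$ in the subharmonic case, while the elementary computation $c_r=\pi(r^2-1-\log r^2)$ makes $\frac{2\pi}{c_r}\int_1^r\log\frac{r^2}{s^2}s\,ds=1$.

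Part (b) then follows at once: $\log|T|^2$ is subharmonic, so $\lambda^T_r(z)\ge\log|T(z)|^2$, i.e.\ $\sigma^\Gamma_r(z)\le1$; and if $D^o_r(z)\cap\Gamma=\emptyset$ then $\log|T|^2$ is harmonic on $D^o_r(z)$, giving equality. For part (a), if $T_1,T_2$ both have order divisor $\Gamma$ then $T_1=e^gT_2$ with $g\in\co(\C)$, so $\log|T_1|^2-\log|T_2|^2=2\Re g$ is harmonic and the averaging identity yields $\lambda^{T_1}_r-\lambda^{T_2}_r=2\Re g$; the factor $e^{2\Re g}$ produced in $|T_1|^2$ cancels the $e^{-2\Re g}$ produced in $e^{-\lambda^{T_1}_r}$, so $\sigma^\Gamma_r$ is unchanged, and the same cancellation (using $dT_1=e^g\,dT_2$ at zeros of $T_2$) gives it for $S^\Gamma_r$, while $\Delta(2\Re g)=0$ gives it for $\Upsilon^\Gamma_r$. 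For the localization statement I would factor $T=T_1T_2$ with ${\rm Ord}(T_1)=\Gamma\cap D^o_r(z)$ and ${\rm Ord}(T_2)=\Gamma\setminus D^o_r(z)$; then $\log|T_2|^2$ is harmonic on $D^o_r(z)$, so $\lambda^{T_2}_r(z)=\log|T_2(z)|^2$ and the $T_2$-factor of $\sigma^\Gamma_r(z)$ is exactly $1$, leaving $\sigma^\Gamma_r(z)=|T_1(z)|^2e^{-\lambda^{T_1}_r(z)}$, which depends only on the finite set $\Gamma\cap D^o_r(z)$ (and one may take $T_1$ a polynomial); replacing $z$ by $\gamma$ does the same for $S^\Gamma_r(\gamma)$.

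For part (c) I would use $T_1(\zeta)=\prod_{\gamma'\in\Gamma\cap D^o_r(z)}(\zeta-\gamma')$ from the previous step. Uniform separation — packing the pairwise disjoint disks $D^o_{R^o_\Gamma}(\gamma')$, $\gamma'\in\Gamma\cap D^o_r(z)$, into $D^o_{r+R^o_\Gamma}(z)$ — bounds $N:=\#(\Gamma\cap D^o_r(z))$ by a constant $N_r$ depending only on $r$ and $R^o_\Gamma$. When $z\in D^o_{R^o_\Gamma}(\gamma)$ with $|z-\gamma|>\ve$, every $\gamma'\in\Gamma\setminus\{\gamma\}$ has $|z-\gamma'|\ge R^o_\Gamma$, so $|T_1(z)|^2\ge\ve^2\min(1,R^o_\Gamma)^{2N_r}$; meanwhile on $\A^o_r(z)$ each of the $\le N_r$ factors satisfies $|\zeta-\gamma'|<2r$, so $\log|T_1|^2\le N_r\log^+(4r^2)$ there, whence $\lambda^{T_1}_r(z)\le N_r\log^+(4r^2)$ by the averaging identity, and $\sigma^\Gamma_r(z)\ge C_r\ve^2$ with $C_r=C_r(r,R^o_\Gamma)>0$. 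The non-integrability of $1/\sigma^\Gamma_r$ near a point $\gamma_0\in\Gamma$ is then immediate: $\lambda^T_r$ is locally bounded, so $\sigma^\Gamma_r(z)=|T(z)|^2e^{-\lambda^T_r(z)}\le C|z-\gamma_0|^2$ near $\gamma_0$, and $|z-\gamma_0|^{-2}$ is not locally integrable in the plane.

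Finally, for part (d) I would compute $\ii\di\dbar\lambda^T_r$ as a current. By the Poincar\'e--Lelong formula $\ii\di\dbar\log|T|^2=2\pi\delta_\Gamma$; writing $\lambda^T_r(z)=\frac{1}{c_r}\int_{\A^o_r(0)}\log|T(z+\zeta)|^2\log\frac{r^2}{|\zeta|^2}\omega_o(\zeta)$ and pairing $\ii\di\dbar\lambda^T_r$ against a test function $\phi$, Fubini moves $\ii\di\dbar$ onto $\log|T(z+\zeta)|^2$ and produces $\frac{2\pi}{c_r}\int_{\A^o_r(0)}\big(\sum_\gamma\phi(\gamma-\zeta)\big)\log\frac{r^2}{|\zeta|^2}\omega_o(\zeta)$; the substitution $z=\gamma-\zeta$ in each summand rewrites this as $\frac{2\pi}{c_r}\int_\C\phi(z)\big(\sum_{\gamma\in\Gamma\cap\A^o_r(z)}\log\frac{r^2}{|z-\gamma|^2}\big)\omega_o(z)$, i.e.\ $\Upsilon^\Gamma_r=\frac{1}{2\pi}\Delta\lambda^T_r=\frac{1}{c_r}\big(\int_{\A^o_r(z)}\log\frac{r^2}{|\zeta-z|^2}\delta_\Gamma(\zeta)\big)\omega_o$, which is the asserted formula. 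The step I expect to be the main obstacle is (c): it is the only part needing genuine estimates rather than exact identities, and it forces one to combine the localization from (a), the counting bound from uniform separation, and crude bounds on $\log|T_1|^2$ over the annulus into a clean lower bound; everything else reduces to the averaging identity and Poincar\'e--Lelong.
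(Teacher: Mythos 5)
Your proposal is correct and follows essentially the same route as the paper: the reproducing property of the logarithmic annular average for harmonic functions (with the sub-mean-value inequality for subharmonic ones) handles (a) and (b), the polynomial $\prod(\zeta-\gamma')$ cutting out $\Gamma\cap D^o_r(z)$ together with the separation-radius bounds gives (c), and Poincar\'e--Lelong gives (d). In fact you spell out two points the paper leaves implicit --- the factorization $T=T_1T_2$ justifying the localization claim in (a) and the non-integrability of $1/\sigma^{\Gamma}_r$ via the local boundedness of $\lambda^T_r$ --- so no changes are needed.
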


\begin{proof}
If $\tilde T$ is another holomorphic function with ${\rm Ord}(\tilde T)=\Gamma$ then $\tilde T = e^h T$ for some $h \in \co (\C)$, and thus 
\[
\lambda ^{\tilde T}_r = 2\re h + \lambda ^T _r.
\]
Thus (a) follows.  The sub-mean value property implies that $\sigma^{\Gamma}_r \le 1$, and if $D^o_r(z) \cap \Gamma=\emptyset$ then $\log |T|^2 |_{D^o_r(z)}$ is harmonic, and thus by the mean value property for harmonic functions we have $\sigma ^{\Gamma}_r(z) =1$.  Thus (b) holds.   To prove (c), fix $\gamma \in \Gamma$ and $z \in \C$ with $\ve \le |z-\gamma| \le R^o _{\Gamma}$.  By (a), we may use the function 
\[
T(\zeta) = \prod _{\mu \in \Gamma \cap D^o_r(z)-\{ \gamma\}} z-\mu
\]
to cut out $\Gamma \cap D^o_r(z)$.  For ease of notation, let us write $\Gamma \cap D^o_r(z) - \{\gamma\} = \{\mu _1,...,\mu _{N_r}\}$.  We note that since $\Gamma$ is uniformly separated, $N_r$ is uniformly bounded independent of $z$.  

With this choice of $T$, we have 
\begin{eqnarray*}
\log \sigma _r ^{\Gamma}(z)  &=& \log |z-\gamma|^2 - \frac{1}{c_r}\int _{\A^o_r(z)} \log |\zeta  - \gamma|^2 \log \frac{r^2}{|\zeta -z|^2} \omega _o (\zeta) \\
&& + \sum _{j=1} ^{N_r} \log |z -\mu_j |^2 - \frac{1}{c_r}\int _{\A^o_r(z)} \log |\zeta  - \mu_j|^2 \log \frac{r^2}{|\zeta -z|^2}\omega _o (\zeta).
\end{eqnarray*}
Now, $\log |z-\mu _j|^2 \ge \log (R^o _{\Gamma})^2$ for $1 \le j \le N_r$, while 
\[
\frac{1}{c_r} \int_{\A^o_r(z)} \log |\zeta - x|^2 \log \frac{r^2}{|\zeta -z|^2} \omega _o (\zeta) \le \log r^2 \quad \text{for }x=\gamma, \mu _1, ..., \mu _{N_r}.
\]
It follows that 
\[
\sigma ^{\Gamma} _r (z) \ge  r^{2(N_r+1)} (R^o _{\Gamma})^{2N_r} |z-\gamma|^2,
\]
and therefore we have (c).

Finally, (d) is a consequence of the Lelong-Poincar\'e formula 
\[
\frac{1}{2\pi} \Delta \log |T|^2 = \delta _{\Gamma}
\]
in the sense of distributions.
\end{proof}

\subsection{Sufficiency}

In this section we present the following result, which is only a slight modification of a theorem from \cite{pv}.  

\begin{d-thm}[Strong sufficiency: Euclidean case]\label{s-suff-eucl}
Let $\vp :\C \to [-\infty, \infty)$ be any subharmonic weight.  Assume that $\Gamma \subset \C$ is uniformly separated with respect to the Euclidean distance, and that
\begin{equation}\label{density-ot-style}
\Delta \vp  \ge 2\pi \alpha \Upsilon^{\Gamma}_r 
\end{equation}
for some $r >0$ and $\alpha >1$.  Then the restriction $\sr_{\Gamma} :\sh ^2 (\C , e^{-\vp}\omega _o ) \to \ell ^2 (\Gamma, e^{-\vp})$ is surjective.
\end{d-thm}

First, we apply Theorem \ref{ot-basic} to the case at hand.  Let $(X,\omega) = (\C, \omega _o)$, choose $T \in \co (\C)$  with ${\rm Ord}(T)=\Gamma$, and take $\lambda := \lambda ^T_r$ as in \eqref{lambda-eucl}.  Then $|T|^2e^{-\lambda} \le 1$, and thus the curvature conditions of Theorem \ref{ot-basic} mean exactly that $D^+_{\vp}(\Gamma) < 1$ implies the following result.

\begin{d-thm}
Let $\vp$ be a plurisubharmonic function on $\C$, and let $\Gamma \subset \C$ be any closed discrete subset.  Assume that 
\[
\Delta \vp  \ge 2\pi \alpha \Upsilon^{\Gamma}_r 
\]
for some $\alpha >1$.  Then for any $f : \Gamma \to \C$ satisfying 
\[
\sum _{\gamma \in \Gamma} \frac{|f(\gamma)|^2e^{-\vp(\gamma)}}{S_r^{\Gamma}(\gamma)} < +\infty
\]
there exists $F \in \sh ^2 (\C, e^{-\vp}\omega _o)$ such that 
\[
F|_{\Gamma} = f \quad \text{and} \quad \int _{\C} |F|^2e^{-\vp} \omega _o \le \frac{24\pi}{\alpha} \sum _{\gamma \in \Gamma} \frac{|f(\gamma)|^2e^{-\vp(\gamma)}}{S_r^{\Gamma}(\gamma)}.
\]
\end{d-thm}

To finish the proof of Theorem \ref{s-suff-eucl}, it suffices to prove the following result.

\begin{prop}\label{denom-bds-eucl}
Let $\Gamma \subset \C$ be a closed discrete subset.  Then $\Gamma$ is uniformly separated with respect to the Euclidean distance if and only if for any $r > 1$ there exists $C_r>0$ such that 
\[
\inf _{\gamma \in \Gamma} S_r^{\Gamma}(\gamma) \ge C_r.
\]
\end{prop}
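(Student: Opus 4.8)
The plan is to reduce everything, for each fixed $\gamma \in \Gamma$, to an explicit finite product by exploiting the fact (Proposition \ref{geometric-invariants}(a)) that $S^{\Gamma}_r(\gamma)$ is independent of the choice of $T$ with ${\rm Ord}(T)=\Gamma$ and depends only on $\Gamma \cap D^o_r(\gamma)$. So I would fix $\gamma$, list $(\Gamma \cap D^o_r(\gamma))\setminus\{\gamma\} = \{\mu_1,\dots,\mu_{N_r}\}$, and use
\[
T(\zeta) = (\zeta-\gamma)\prod_{j=1}^{N_r}(\zeta-\mu_j).
\]
Then $|dT|^2_{\omega_o}(\gamma) = \prod_j |\gamma-\mu_j|^2$ and $\log|T(\zeta)|^2 = \log|\zeta-\gamma|^2 + \sum_j \log|\zeta-\mu_j|^2$, which upon insertion into $\lambda^T_r(\gamma)$ yields the product formula
\[
S^{\Gamma}_r(\gamma) = e^{-a_r}\prod_{j=1}^{N_r}\exp\bigl(\Phi_\gamma(\mu_j)\bigr), \qquad a_r := \frac{1}{c_r}\int_{\A^o_r(0)}\log|\zeta|^2 \log\tfrac{r^2}{|\zeta|^2}\,\omega_o(\zeta),
\]
where $\Phi_\gamma(\mu) := \log|\gamma-\mu|^2 - \frac{1}{c_r}\int_{\A^o_r(\gamma)}\log|\zeta-\mu|^2 \log\tfrac{r^2}{|\zeta-\gamma|^2}\,\omega_o(\zeta)$ and $a_r$ is a finite constant depending only on $r$. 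Thus the whole statement comes down to estimating the single-factor quantities $\Phi_\gamma(\mu)$.

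The one genuinely structural point I would isolate first is that $\Phi_\gamma(\mu)\le 0$ for every $\mu$. This is the same sub-mean value property already used in the proof of Proposition \ref{geometric-invariants}(b): the function $\zeta\mapsto\log|\zeta-\mu|^2$ is subharmonic on $\C$, so its circular averages about $\gamma$ dominate its value at $\gamma$, and integrating those circular averages against the radial weight $\log\tfrac{r^2}{\rho^2}\rho\,d\rho$ on $1<\rho<r$ (whose total mass is exactly $c_r$) shows the weighted annular average in $\Phi_\gamma(\mu)$ is $\ge \log|\gamma-\mu|^2$. Consequently every factor in the product formula is $\le 1$, so for any single $\mu'\in(\Gamma\cap D^o_r(\gamma))\setminus\{\gamma\}$ we get the one-sided bound $S^{\Gamma}_r(\gamma)\le e^{-a_r}\exp(\Phi_\gamma(\mu'))$.

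For the necessity of uniform separation, I would argue by contraposition: if $\Gamma$ is not uniformly separated, pick $\gamma_k\ne\gamma_k'$ in $\Gamma$ with $\delta_k := |\gamma_k-\gamma_k'|\to 0$. Fix any $r>1$; for $k$ large, $\delta_k<\min(1/2,r)$, so $\gamma_k'$ is one of the factors for $\gamma_k$, and since $|\zeta-\gamma_k|>1$ on $\A^o_r(\gamma_k)$ while $|\gamma_k-\gamma_k'|<1/2$, we get $|\zeta-\gamma_k'|>1/2$ there; hence the integral term in $\Phi_{\gamma_k}(\gamma_k')$ is $\ge \log\tfrac14$, so $\Phi_{\gamma_k}(\gamma_k')\le \log(4\delta_k^2)$. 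With the previous paragraph's one-sided bound this gives $S^{\Gamma}_r(\gamma_k)\le 4e^{-a_r}\delta_k^2\to 0$, so no $C_r$ can exist for this (indeed any) $r$.

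For sufficiency, assume $\Gamma$ has Euclidean separation radius $R^o_{\Gamma}>0$. I would bound each $\Phi_\gamma(\mu_j)$ below: $\log|\gamma-\mu_j|^2\ge\log(4(R^o_{\Gamma})^2)$ since $\mu_j\in\Gamma\setminus\{\gamma\}$, while for $\zeta\in\A^o_r(\gamma)$ and $\mu_j\in D^o_r(\gamma)$ one has $|\zeta-\mu_j|<2r$, so the integral term is $\le\log(4r^2)$; hence $\Phi_\gamma(\mu_j)\ge 2\log(R^o_{\Gamma}/r)$. A standard packing argument — the pairwise disjoint disks $D^o_{R^o_{\Gamma}}(\mu_j)$ all lie in $D^o_{r+R^o_{\Gamma}}(\gamma)$ — bounds $N_r$ by a constant $N^{\max}_r$ depending only on $r$ and $R^o_{\Gamma}$, and plugging into the product formula gives $S^{\Gamma}_r(\gamma)\ge e^{-a_r}\bigl(\min(1,R^o_{\Gamma}/r)\bigr)^{2N^{\max}_r}=:C_r>0$, uniformly in $\gamma$. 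Combining the two directions proves Proposition \ref{denom-bds-eucl}. I do not expect a deep obstacle here; the only steps needing care are the correct derivation of the weighted-annulus sub-mean value inequality used for $\Phi_\gamma(\mu)\le 0$, and the observation that the finite-product reduction together with the uniform bound $N_r\le N^{\max}_r$ is exactly where uniform separation enters essentially — without it the product of lower-bounded factors need not stay bounded away from zero.
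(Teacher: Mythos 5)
Your proposal is correct, and its skeleton is the paper's: by Proposition \ref{geometric-invariants}(a) one computes $S^{\Gamma}_r(\gamma)$ with the local polynomial $T(\zeta)=(\zeta-\gamma)\prod_j(\zeta-\mu_j)$, and your sufficiency half is exactly the paper's estimate (namely $|\gamma-\mu_j|\ge 2R^o_{\Gamma}$, $|\zeta-\mu_j|<2r$ on $\A^o_r(\gamma)$, and a bound on the number of local points coming from uniform separation) merely reorganized into the per-point factors $\Phi_\gamma(\mu_j)$. Where you genuinely diverge is the converse direction. You first observe, via the sub-mean value property against the radial weight, that every factor satisfies $\Phi_\gamma(\mu)\le 0$, so that $S^{\Gamma}_r(\gamma)\le e^{-a_r}e^{\Phi_\gamma(\mu')}$ for a single nearby point $\mu'$, giving $S^{\Gamma}_r(\gamma_k)\le 4e^{-a_r}\delta_k^2\to 0$ when separation fails. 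The paper instead proves a lower bound on $\lambda^T_r(\gamma_o)$ by estimating every term (each term is $\ge -A_r$ after normalizing $\log|\zeta-\gamma_j|^2$ by $4r^2$) and then extracts the nearest-neighbour distance from $|dT(\gamma_o)|^2_{\omega_o}=\prod_j|\gamma_o-\gamma_j|^2\le r^{2(N-1)}|\gamma_o-\gamma_1|^2$; its resulting bound $|\gamma_o-\gamma_1|\ge r^{1-N}\sqrt{C_re^{M_r}}$ still carries the local count $N$, which in that direction is not yet known to be uniformly bounded. Your one-sided bound by a single factor removes this dependence entirely and yields a separation bound depending only on $r$, $a_r$ and $C_r$, so your version of the necessity half is cleaner and more uniform than the paper's, at the modest cost of the subharmonicity observation $\Phi_\gamma(\mu)\le 0$ --- which is in any case the same device the paper already uses to prove $\sigma^{\Gamma}_r\le 1$ in Proposition \ref{geometric-invariants}(b).
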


\begin{proof}
Since, by Proposition \ref{geometric-invariants}(a), for each $\gamma _o \in \Gamma$ the quantity $S_r^{\Gamma}(\gamma_o)$ depends only on finite subset 
\[
\Gamma _r (\gamma _o) := \{ \gamma \in \Gamma\ ;\ |\gamma _o-\gamma| < r \}
\]
of $\Gamma$,  we may use any holomorphic function $T$ that vanishes on $\Gamma _r(\gamma _o)$.  Let us fix $\gamma _o$, then, and enumerate the points of $\Gamma _r(\gamma _o)$ as $\gamma _o, \gamma _1,...,\gamma _N$, in such a way that $\gamma _1$ is the (not necessarily unique) closest point of $\Gamma - \{\gamma _o\}$ to $\gamma _o$.  We take the function 
\[
T(z) = \prod _{j=o} ^N z-\gamma _j.
\]
Note that 
\[
|dT(\gamma_o)|_{\omega _o}^2 = \prod _{j=1} ^N |\gamma_j- \gamma _o|^2.
\]

Now suppose $\Gamma$ is uniformly separated in the Euclidean distance.  Then the number $N= N(\gamma _o)$ is uniformly bounded for each $r$,  independent of $\gamma _o$, and we have 
\[
|dT(\gamma_o)|_{\omega _o}^2 \ge (R^o _{\Gamma})^N.
\] 
On the other hand, since $|T|< r^N$, 
\[
\lambda _r ^T < N \log r.
\]
Thus we see that 
\[
|dT(\gamma _o)|^2_{\omega _o} e^{-\lambda _r ^T(\gamma _o)} \ge C_r
\]
where $C_r$ depends only on $r$.

In the other direction,  observe first that since $|\gamma _j - \zeta| < 2r$ for all $\zeta \in \A^o_r(\gamma _o)$,
\begin{eqnarray*}
\int _{\A^o _r(\gamma _o)}\left ( \log \frac{|\zeta -\gamma _j |^2}{4r^2} \right )\log \frac{r^2}{|\zeta -\gamma _o|^2} \omega _o (\zeta) & \ge & \log 4 \int _{\A^o _r(\gamma _o)}\left ( \log \frac{|\zeta -\gamma _j |^2}{4r^2} \right )\omega _o (\zeta)\\
& \ge & \log 4 \int _{D^o _{2r}(\gamma _j)}\left ( \log \frac{|\zeta -\gamma _j |^2}{4r^2} \right )\omega _o (\zeta)\\
& = & \log 4 \int _{D^o _{2r}(0)}\left ( \log \frac{|\zeta|^2}{4r^2} \right )\omega _o (\zeta),
\end{eqnarray*}
and therefore 
\[
\frac{1}{c_r} \int _{\A^o _r(\gamma _o)}\left ( \log \frac{|\zeta -\gamma _j |^2}{4r^2} \right )\log \frac{r^2}{|\zeta -\gamma _o|^2} \omega _o (\zeta) \ge -A_r
\]
for some constant $A_r$ that is independent of $\Gamma$.  We therefore have 
\[
\lambda _r ^T(\gamma _o) = \sum _{j=o} ^N \frac{1}{c_r} \int _{\A^o _r(\gamma _o)}\left ( \log |\zeta -\gamma _j |^2 \right )\log \frac{r^2}{|\zeta -\gamma _o|^2} \omega _o (\zeta)\\
\ge -(N+1)A_r \ge - M_r,
\]
Where again $M_r$ depends only on $r$.  We therefore have 
\[
C_r e^{M_r} \le |dT(\gamma_o)|^2_{\omega _o} e^{M_r-\lambda _r ^T (\gamma _o)} \le |dT(\gamma_o)|_{\omega _o}^2 = \prod _{j=1} ^N |\gamma _o-\gamma _j|^2 \le r^{2(N-1)} |\gamma _o-\gamma_1|^2.
\]
Thus 
\[
|\gamma _o - \gamma _1| \ge r^{1-N}\sqrt{C_re^{M_r}},
\]
and the proof is thus finished.
\end{proof}

\begin{rmk}\label{uniformity-of-denom-bd}
Notice that if $\Gamma$ is uniformly separated then the constant $C_r$ depends only on the separation radius $R^o_{\Gamma}$, and not on $\Gamma$ itself.  That is to say, the same constant $C_r$ works for all sequences whose separation constant is $\ge R^o_{\Gamma}$.
\red
\end{rmk}

Finally, we observe that if, in place of $\vp$, we use the function $\vp _r$ defined by \eqref{log-avg-defn}, then Theorem \ref{s-suff-eucl} implies the `if' direction of Theorem \ref{char-eucl}. Indeed,  if we replace $\vp$ by $\vp _r$ in Theorem \ref{s-suff-eucl}, then Condition \eqref{density-ot-style} is equivalent to the condition $D^+_{\vp}(\Gamma) < 1$. 
\qed

\subsection{Necessity}

To complete the proof of Theorem \ref{char-eucl}, we  establish the following result, whose proof occupies the final part of this section.

\begin{d-thm}[Necessity: Euclidean case]\label{necess-eucl}
Let $\vp \in \sC ^2(\C)$ be a weight function satisfying 
\[
0 \le \Delta \vp \le M\omega _o \quad \text{and} \quad \frac{1}{\pi r^2} \int _{D^o_r(z)} \log \frac{r^2}{|\zeta -z|^2} \Delta \vp (\zeta) \ge m 
\]
for some positive constants $m$ and $M$, and let $\Gamma \subset \C$ be a closed discrete subset.  If 
\[
\sr _{\Gamma}:\sh ^2(\C , e^{-\vp}\omega _o) \to \ell ^2 (\Gamma ,e^{-\vp})
\]
is surjective, then $\Gamma$ is uniformly separated and $D^+_{\vp}(\Gamma) < 1$.
\end{d-thm}

For the rest of this section, we assume that our weight $\vp$ is as in Theorem \ref{necess-eucl}.

\subsubsection{\sf The interpolation constant}
Observe that if the restriction map $\sr _{\Gamma} : \sh ^2(\C ,e^{-\vp} \omega _o) \to \ell ^2 (\Gamma, e^{-\vp})$ is surjective, then it has a bounded section (i.e., there exists a bounded extension operator).  The argument is as follows.  First, for each $f \in \ell ^2 (\Gamma, e^{-\vp})$ take the extension $\se _{\Gamma}(f)$ that is orthogonal to the kernel of $\sr _{\Gamma}$.  Since ${\rm Kernel}(\sr _{\Gamma})$ is a closed subspace of $\sh ^2 (\C , e^{-\vp}\omega _o)$, this extension is well-defined, and is in fact the (unique) extension of minimal norm in $\sh^2(\C, e^{-\vp}\omega_o)$.  Note that $\se _{\Gamma}: \ell ^2(\Gamma, e^{-\vp}) \to {\rm Kernel}(\sr _{\Gamma})^{\perp}$ has closed graph: indeed, if $f_j \to f$ in $\ell ^2(\Gamma, e^{-\vp})$ and $\se _{\Gamma}(f_j) \to G$ in $\sh ^2(\C, e^{-\vp}\omega_o)$, then since convergence in $\sh ^2(\C, e^{-\vp}\omega _o)$ implies locally uniform convergence, $G$ extends $f$.  Furthermore, since $G\in {\rm Kernel}(\sr _{\Gamma})^{\perp}$, we must have $G = \se_{\Gamma}(f)$ by the uniqueness of the minimal extension.  Boundedness of $\se_{\Gamma}$ now follows from the Closed Graph Theorem.

\begin{defn}
Let $\Gamma$ be an interpolation sequence.  The number
\[
\sa _{\Gamma} = \inf \{ A\ ;\ \exists \ E : \ell ^2 (\C , e^{-\vp}) \to \sh ^2 (\C , e^{-\vp}\omega _o) \text{ with }\sr _{\Gamma}E ={\rm Id} \text{ and }  ||E|| \le A\}
\]
is called the {\it interpolation constant} of $\Gamma$.
\red
\end{defn}
Note that in fact, $\sa _{\Gamma} = ||\se _{\Gamma}||$.

\subsubsection{\sf Necessity of uniform separation}
Suppose $\Gamma$ is an interpolation sequence, and let $\gamma _1, \gamma_2 \in \Gamma$ be any two distinct points.  Consider the $\ell^2(\Gamma, e^{-\vp})$-datum $f:\Gamma \to \C$ defined by 
\[
f (\gamma_1) = e^{\vp (\gamma_1)/2}, \quad f(\mu) = 0 \text{ for all }\mu \in \Gamma - \{\gamma _1\}.
\]
Note that $||f||^2 _{\ell ^2(\Gamma, e^{-\vp})} = 1$.  Since $\Gamma$ is an interpolation sequence, the function 
\[
F := \se _{\Gamma}(f) \in \sh ^2(\C , e^{-\vp} \omega _o)
\]
satisfies 
\[
|F(\gamma_1)|^2e^{-\vp(\gamma _1)} = 1, \quad  |F(\gamma_2)|^2e^{-\vp(\gamma_2)} = 0  \quad \text{ and } \quad \int _{\C} |F|^2e^{-\vp} \omega _o \le \sa _{\Gamma}^2 .
\]
By Proposition \ref{bergman-eucl}(b), 
\[
\frac{1}{|\gamma _1 - \gamma_2|} = \frac{|F(\gamma_1)|^2e^{-\vp(\gamma _1)} - |F(\gamma_2)|^2e^{-\vp(\gamma_2)}}{|\gamma _1 - \gamma_2|} \le \sup |d(|F|^2e^{-\vp})| \le C_r \sa _{\Gamma}^2.
\]
Thus any interpolation sequence is uniformly separated.

\subsubsection{\sf Perturbation of interpolation sequences}

In order to estimate the density, we are going to need to be able to perturb our sequences $\Gamma$ a little bit.  We shall do so in two ways.  In the first way, we just perturb the points of $\Gamma$ so that each point moves at most a distance smaller than the separation radius, while in the second way, we add a single point to $\Gamma$.  The upshot is that both sequences remain interpolation sequences, though in the second case we must also to perturb the weight.  The precise results are as follows, and the proofs are simple modifications of proofs of analogous results in \cite{quimseep}.

\begin{prop}\label{jiggle-eucl}
Let $\Gamma \subset \C$ be an interpolation sequence with separation radius $R^o_{\Gamma}$, enumerated as $\Gamma = \{ \gamma _1, \gamma_2, ... \}$, and let $\sa _{\Gamma}$ be the interpolation constant of $\Gamma$.  Suppose $\Gamma '$ is another sequence, such that there exists a constant $\delta \in (0, \min (\sa _{\Gamma}^{-1}, R^o_{\Gamma}))$, and an enumeration $\Gamma ' = \{ \gamma '_1, \gamma '_2, ... \}$ such that 
\[
\sup _{i\in \N} |\gamma _i - \gamma '_i | \le \delta ^2.
\]
Then $\Gamma '$ is also an interpolation sequence, and its interpolation constant is at most 
\[
C\frac{\sa _{\Gamma}}{1-\delta \sa _{\Gamma}},
\]
where $C$ is independent of $\Gamma$ (but depends on the upper bound for $\Delta \vp$).
\end{prop}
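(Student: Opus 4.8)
The plan is to run the classical ``small perturbation of a boundedly invertible operator'' argument, following Ortega Cerd\`a and Seip \cite{quimseep}: we realize an extension operator for $\Gamma'$ as the one for $\Gamma$ twisted by a convergent Neumann series. Write $E := \se_{\Gamma} : \ell^2(\Gamma, e^{-\vp}) \to \sh^2(\C, e^{-\vp}\omega_o)$ for the minimal extension operator, so $\sr_{\Gamma}E = \mathrm{Id}$ and $\|E\| = \sa_{\Gamma}$. First I would record two soft facts. Since $|\gamma_i - \gamma_i'| \le \delta^2 < R^o_{\Gamma}$, the perturbed set has $R^o_{\Gamma'} \ge R^o_{\Gamma} - \delta^2 > 0$, so $\Gamma'$ is uniformly separated and, by Corollary \ref{Bergman-sums-eucl}, $\sr_{\Gamma'} : \sh^2(\C, e^{-\vp}\omega_o) \to \ell^2(\Gamma', e^{-\vp})$ is bounded. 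Also, since $\Delta\vp$ is bounded, $|\vp(\gamma_i) - \vp(\gamma_i')| \le C(M)$ uniformly (Lemma \ref{weight-centering-eucl}).

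Next, for each $i$ apply the holomorphic recentering Lemma \ref{weight-centering-eucl} with the fixed radius $r = 1$ to obtain $G_i \in \co(D^o_1(\gamma_i))$ with $G_i(\gamma_i) = 0$ and $|2\Re G_i - \vp + \vp(\gamma_i)| \le C(M)$ on $D^o_1(\gamma_i)$, and define the ``phase-corrected reindexing'' $J : \ell^2(\Gamma', e^{-\vp}) \to \ell^2(\Gamma, e^{-\vp})$ by $(Ja)_i := a_i\, e^{-G_i(\gamma_i')}$ (placing the value at $\gamma_i'$ as a value at $\gamma_i$). Because $2\Re G_i(\gamma_i') = \vp(\gamma_i') - \vp(\gamma_i) + O(C(M))$, the map $J$ is bi-bounded with constants depending only on $M$. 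Now set $\Phi := \sr_{\Gamma'} \circ E \circ J$, a bounded operator on $\ell^2(\Gamma', e^{-\vp})$. For $a = (a_i)$ the function $F_0 := E(Ja) \in \sh^2$ satisfies $F_0(\gamma_i) = a_i e^{-G_i(\gamma_i')}$ and $\|F_0\| \le \sa_{\Gamma}\|J\|\,\|a\|$; writing $\tilde F_0 := F_0 e^{-G_i}$ (holomorphic near $\gamma_i$, with $\tilde F_0(\gamma_i) = F_0(\gamma_i)$) the $e^{G_i(\gamma_i')}$ factors telescope, giving $(\Phi a)_i = F_0(\gamma_i') = a_i + \big(\tilde F_0(\gamma_i') - \tilde F_0(\gamma_i)\big)e^{G_i(\gamma_i')}$, i.e.\ $\Phi = \mathrm{Id} + B$ with $Ba := \big((\tilde F_0(\gamma_i') - \tilde F_0(\gamma_i))\,e^{G_i(\gamma_i')}\big)_i$. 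The point of the phase correction in $J$ is exactly that it cancels a term $\tilde F_0(\gamma_i)(e^{G_i(\gamma_i')} - 1)$ that would otherwise not be small.

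The crux is to estimate $\|B\|$ by a constant multiple of $\delta \sa_{\Gamma}$ uniformly in $\Gamma$. For fixed $i$, a Cauchy estimate on the circle $\{|w - \gamma_i| = \rho\}$ with $\rho := \tfrac12\min(1, R^o_{\Gamma})$, together with the sub-mean-value inequality for $|\tilde F_0|^2$, bounds $|\tilde F_0(\gamma_i') - \tilde F_0(\gamma_i)|^2$ by $C\,\delta^4\rho^{-4}\int_{D^o_\rho(\gamma_i)}|\tilde F_0|^2\omega_o$; since $|2\Re G_i - \vp + \vp(\gamma_i)| \le C(M)$ on $D^o_\rho(\gamma_i) \subset D^o_1(\gamma_i)$ we have $|\tilde F_0|^2 \asymp e^{\vp(\gamma_i)}|F_0|^2 e^{-\vp}$ and $|e^{G_i(\gamma_i')}|^2 \asymp e^{\vp(\gamma_i') - \vp(\gamma_i)}$ there, so the factor $e^{\vp(\gamma_i)}$ cancels and
\[
|(Ba)_i|^2 e^{-\vp(\gamma_i')} \le C(M)\,\frac{\delta^4}{\rho^4}\int_{D^o_\rho(\gamma_i)}|F_0|^2 e^{-\vp}\,\omega_o .
\]
Summing over $i \in \N$ and using that the disks $D^o_\rho(\gamma_i)$ are pairwise disjoint (as $2\rho \le R^o_{\Gamma}$ while distinct points of $\Gamma$ are $\ge 2R^o_{\Gamma}$ apart) yields $\|Ba\|^2 \le C(M)(\delta/\rho)^4\|F_0\|^2 \le C(M)(\delta/\rho)^4\sa_{\Gamma}^2\|a\|^2$. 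The hypothesis $\delta < \min(\sa_{\Gamma}^{-1}, R^o_{\Gamma})$ — in conjunction with the perturbation being of size $\delta^2$, not $\delta$ — is precisely what renders $(\delta/\rho)^2\sa_{\Gamma}$ a definite constant $< 1$ (after shrinking the admissible range of $\delta$ by a factor depending only on $M$, if necessary), so that $\|B\| \le C(M)\,\delta\sa_{\Gamma} < 1$. Then $\Phi$ is invertible with $\|\Phi^{-1}\| \le (1 - \|B\|)^{-1}$, the operator $E' := E \circ J \circ \Phi^{-1}$ satisfies $\sr_{\Gamma'}E' = \mathrm{Id}$, so $\Gamma'$ is an interpolation sequence, and $\|E'\| \le \|E\|\,\|J\|\,(1-\|B\|)^{-1} \le C(M)\,\sa_{\Gamma}/(1 - \delta\sa_{\Gamma})$, as claimed.

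The main obstacle I anticipate is exactly the uniform estimate on $\|B\|$: the Cauchy and sub-mean-value bounds degrade at the small scale $R^o_{\Gamma}$ (the $\rho^{-4}$), and one must verify that this degradation is absorbed by the smallness $\delta^2$ of the displacement together with the two competing constraints $\delta < R^o_{\Gamma}$ and $\delta < \sa_{\Gamma}^{-1}$, leaving a final constant that depends only on the upper bound $M$ for $\Delta\vp$. The careful choice of the phase-corrected reindexing $J$ — so that the zeroth-order part of $\Phi$ comes out exactly as the identity — is the other place where some bookkeeping with the recentering $G_i$ is needed.
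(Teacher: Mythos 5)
Your construction is, at bottom, the paper's own proof recast at the operator level: the paper transplants the data from $\Gamma'$ to $\Gamma$, applies the minimal extension operator, and corrects the resulting error by successive approximation with per-step ratio comparable to $\delta\sa_\Gamma$ — which is exactly the Neumann series for your $\Phi^{-1}=({\rm Id}+B)^{-1}$. The genuinely different ingredients are your transfer map $J$ built from the recentering multipliers $e^{-G_i(\gamma_i')}$ of Lemma \ref{weight-centering-eucl} (a clean way to make the zeroth-order term exactly the identity, where the paper instead uses the isometric scalar transfer $e^{\frac12(\vp(\gamma_i)-\vp(\gamma_i'))}$ and a choice of unimodular constants $\alpha_j$), and your estimation of $B$ by Cauchy estimates at the scale $\rho=\tfrac12\min(1,R^o_{\Gamma})$, where the paper uses the fixed-scale gradient estimate of Proposition \ref{bergman-eucl}(b) and Corollary \ref{Bergman-sums-eucl}(b) to get \eqref{comparison-eucl}.

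The gap is precisely the step you flag and then dismiss. Your argument yields $\|B\|\le C(M)\,\delta^2\rho^{-2}\sa_{\Gamma}=4C(M)\,\delta^{2}\min(1,R^o_{\Gamma})^{-2}\sa_{\Gamma}$, and the claim that this is $\le C(M)\,\delta\sa_{\Gamma}<1$ ``after shrinking the admissible range of $\delta$ by a factor depending only on $M$'' is not justified: absorbing $\rho^{-2}$ requires $\delta\lesssim\min(1,R^o_{\Gamma})^{2}$, a restriction depending on $\Gamma$ through $R^o_{\Gamma}$, which does not follow from $\delta<\min(\sa_{\Gamma}^{-1},R^o_{\Gamma})$. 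Indeed, when $R^o_{\Gamma}<1$ the hypothesis only gives $\delta^{2}\rho^{-2}\sa_{\Gamma}\le 4(\delta/R^o_{\Gamma})\,\sa_{\Gamma}$, and with $\delta$ near $R^o_{\Gamma}$ there is no smallness at all; so invertibility of $\Phi$ is not secured on the stated range of $\delta$, and your final constant depends on $\Gamma$, contrary to the statement. The repair is to avoid zooming to the scale $R^o_{\Gamma}$: estimate $\tilde F_0(\gamma_i')-\tilde F_0(\gamma_i)$ by a Cauchy/sub-mean-value bound on the \emph{unit} disk $D^o_1(\gamma_i)$ (where, by Lemma \ref{weight-centering-eucl}, the recentered weight is comparable to a constant with comparison constant $C(M)$), which gives $|(Ba)_i|^2e^{-\vp(\gamma_i')}\le C(M)\,\delta^4\int_{D^o_1(\gamma_i)}|F_0|^2e^{-\vp}\omega_o$, and then sum using the bounded overlap of the disks $D^o_1(\gamma_i)$, whose multiplicity is $O\big((1+1/R^o_{\Gamma})^2\big)$. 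This yields $\|B\|\le C(M)\,\delta^{2}(1+1/R^o_{\Gamma})\,\sa_{\Gamma}\le C(M)(\delta^2+\delta)\sa_{\Gamma}\le C'(M)\,\delta\sa_{\Gamma}$, because $\delta<R^o_{\Gamma}$ absorbs exactly one factor of $1/R^o_{\Gamma}$ and $\delta<\sa_{\Gamma}^{-1}\le C(M)$ (since $\sa_{\Gamma}$ is bounded below in terms of $M$ alone, by Proposition \ref{bergman-eucl}(a)); this is the role played in the paper by Proposition \ref{bergman-eucl}(b) and Corollary \ref{Bergman-sums-eucl}(b) in establishing \eqref{comparison-eucl}. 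Two smaller points to tidy in the same spirit: you need $\delta^2$ less than the recentering radius so that $\gamma_i'$ lies in the domain of $G_i$, and your disjointness argument also tacitly needs $\delta^2<\rho/2$; neither is automatic from the hypotheses, but both disappear once you work at unit scale with bounded overlap instead of disjointness.
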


\begin{proof}
By Corollary \ref{Bergman-sums-eucl}(b), if $F \in \sh ^2(\C, e^{-\vp}\omega_o)$ then 
\begin{equation}\label{comparison-eucl}
\sum _{j=1} ^{\infty} \left | |F(\gamma_j)|^2e^{-\vp(\gamma_j)} - |F(\gamma'_j)|^2e^{-\vp(\gamma'_j)}\right | \lesssim \delta ^2 \int _{\C} |F|^2e^{-\vp} \omega _o.
\end{equation}
Now let $f \in \ell ^2(\Gamma ',e^{-\vp})$ with $\sum _{j} |f(\gamma_j')|^2e^{-\vp(\gamma_j')} \le 1$.  Since $\Gamma$ is an interpolation sequence, there exist functions $\{ G_j \ ;\ j =1,2,...\} \subset \sh ^2 (\C, e^{-\vp}\omega _o)$ such that 
\[
G_j (\gamma _i) = f(\gamma _i') e^{\frac{1}{2}(\vp (\gamma _i) - \vp(\gamma _i'))}\delta _{ij} \quad \text{and} \quad \int _{\C}  | \sum _{j=1} ^{\infty} G_j | ^2 e^{-\vp} \omega _o \le \sa _{\Gamma} ^2.
\]
(Indeed, we simply take each $G_j$ to be the minimal extension of $g_j(\gamma _i) := f(\gamma _i') e^{\frac{1}{2}(\vp (\gamma _i) - \vp(\gamma _i'))}\delta _{ij}$, and use the fact that the minimal extension operator is linear.)  The function $F = \sum G_j$ does not extend $f$, but a modification of it comes close.  Indeed, by \eqref{comparison-eucl} we have the estimate 
\[
\sum _{j=1} ^{\infty} \left | |f(\gamma _j')|^2e^{-\vp(\gamma_j')} - |G_j(\gamma_j')|^2 e^{-\vp(\gamma_j')}\right | \lesssim \delta ^2 \sa _{\Gamma}^2.
\]
Thus for an appropriate choice of unimodular constants $\alpha _j$, the function 
\[
F_1 := \sum_j \alpha _j G_j
\]
then satisfies the estimate 
\[
\int _{\C} |F_1|^2e^{-\vp} \omega _o \le \sa _{\Gamma}^2 \quad \text{and} \quad \sum _{j=1} ^{\infty} \left | f(\gamma _j')  - F_1(\gamma_j') \right |^2 e^{-\vp(\gamma_j')} \lesssim \delta ^2 \sa _{\Gamma}^2.
\]
The function $F_1$ almost achieves the extension of the datum $f$, so we correct the error inductively as follows.  Set $f_1 = f :\Gamma \to \C$, and let 
\[
f_2 := f_1 - F_1|_{\Gamma}.
\]
Assuming $F_j$ has been found with
\[
F_j(\gamma _i) = f_j(\gamma_i') e^{\frac{1}{2} \left ( \vp(\gamma _i) - \vp (\gamma _i')\right )}, \quad i=1,2,...,
\]
\[
\int _{\C} |F_j|^2e^{-\vp} \omega _o \le \delta ^{2(j-1)} \sa^{2j} _{\Gamma}, \quad \text{and} \quad \sum _{j=1} ^{\infty} \left | f_j(\gamma _j')  - F_j(\gamma_j') \right |^2 e^{-\vp(\gamma_j')} \lesssim (\delta ^2 \sa _{\Gamma}^2)^j,
\]
set $f_{j+1} := f_j - F_j|_{\Gamma}$ and apply the above procedure to obtain $F_{j+1}$ satisfying 
\[
\int _{\C} |F_j|^2e^{-\vp} \omega _o \le \delta ^{2j} \sa^{2(j+1)} _{\Gamma} \quad \text{and} \quad  \sum _{j=1} ^{\infty} \left | f_{j+1}(\gamma _j')  - F_{j+1}(\gamma_j') \right |^2 e^{-\vp(\gamma_j')} \lesssim (\delta ^2 \sa _{\Gamma}^2)^{j+1}.
\]
Letting 
\[
\tilde F_n = \sum _{j=1} ^n  F_j,
\]
we have 
\[
||\tilde F_n|| \lesssim \frac{\sa _{\Gamma}}{1-\delta \sa _{\Gamma}},
\]
so by Proposition \ref{bergman-eucl} and Montel's Theorem, $\tilde F_n$ is a normal family.  Passing to a locally uniformly convergent subsequence, we obtain a function $F \in \sh ^2(\C, e^{-\vp} \omega _o)$ such that 
\[
F|_{\Gamma} = f \quad \text{and} \quad  ||F|| \le \frac{\sa _{\Gamma}}{1-\delta \sa _{\Gamma}},
\]
as desired.
\end{proof}

\begin{lem}\label{more-pos-wt-better}
If $\Gamma$ is an interpolation sequence for $\vp$, then for any $z \in \C$ and any $\ve > 0$, $\Gamma$ is an interpolation sequence for $\vp +\ve |\cdot -z|^2$, with interpolation constant independent of $z$, and at most a multiple of $\ve^{-3/2}$, with the multiple depending only on $\sa _{\Gamma}$ and the upper bound of $\Delta \vp$.  
\end{lem}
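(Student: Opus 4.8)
The plan is to deduce the new interpolation from the given extension operator $\se_\Gamma$ for $\vp$ by a damping--recentering--$\dbar$ argument; the exponent $\ve^{-3/2}$ comes out of tracking the curvature floor $2\ve\,\omega_o$ of the new weight through H\"ormander's estimate.

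Write $\psi:=\vp+\ve|\cdot-z|^2$, so $\ii\di\dbar\psi=\ii\di\dbar\vp+2\ve\,\omega_o$ and hence $2\ve\,\omega_o\le\ii\di\dbar\psi\le(M+2\ve)\,\omega_o$. Since $|w-z|^2=|w|^2+2\re\!\big(\tfrac{|z|^2}2-\bar z w\big)$ differs from $|w|^2$ by the real part of an entire function, multiplication by $e^{\ve(|z|^2/2-\bar z w)}$ is a unitary isomorphism $\sh^2(\C,e^{-(\vp+\ve|\cdot|^2)}\omega_o)\to\sh^2(\C,e^{-\psi}\omega_o)$ intertwining the restriction maps, so it is enough to treat $z=0$; this also accounts for the independence of the center. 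So fix $\psi=\vp+\ve|\cdot|^2$ and $f\in\ell^2(\Gamma,e^{-\psi})$ with $\|f\|_{\ell^2(e^{-\psi})}=1$. First I would damp: the map $f\mapsto\tilde f$, $\tilde f(\gamma):=f(\gamma)e^{-\ve|\gamma|^2/2}$, is a bijective isometry $\ell^2(\Gamma,e^{-\psi})\to\ell^2(\Gamma,e^{-\vp})$, so $\tilde F:=\se_\Gamma\tilde f$ is entire with $\tilde F(\gamma)=f(\gamma)e^{-\ve|\gamma|^2/2}$ and $\|\tilde F\|_{L^2(e^{-\vp})}\le\sa_\Gamma$. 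For each $\gamma$ put $F_\gamma(w):=\tilde F(w)\,e^{\ve|\gamma|^2/2}\,e^{\ve\bar\gamma(w-\gamma)}$; this is entire, $F_\gamma(\gamma)=f(\gamma)$, and a direct computation gives $|F_\gamma(w)|^2e^{-\psi(w)}=|\tilde F(w)|^2e^{-\vp(w)}\,e^{-\ve|w-\gamma|^2}$, the extra factor being the Bargmann--Fock Gaussian centered at $\gamma$.

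Next I would patch the $F_\gamma$'s. Choosing $\rho$ comparable to $R^o_\Gamma$ so the disks $D^o_\rho(\gamma)$ are disjoint, and cut-offs $\theta_\gamma$ with $\theta_\gamma\equiv1$ on $D^o_{\rho/2}(\gamma)$ and support in $D^o_\rho(\gamma)$, set $W:=\sum_\gamma\theta_\gamma F_\gamma$. Then $W\in\sC^\infty(\C)$ with $W|_\Gamma=f$ (only the $\gamma$-th term survives at $\gamma$), disjointness of supports and the Gaussian factor give $\|W\|^2_{L^2(e^{-\psi})}\le\sum_\gamma\int_{D^o_\rho(\gamma)}|\tilde F|^2e^{-\vp}\le\|\tilde F\|^2_{L^2(e^{-\vp})}\le\sa_\Gamma^2$, and $\dbar W=\sum_\gamma(\dbar\theta_\gamma)F_\gamma$ is supported in the disjoint annuli $D^o_\rho(\gamma)\setminus D^o_{\rho/2}(\gamma)$ --- so it vanishes near $\Gamma$ --- with $\|\dbar W\|^2_{L^2(e^{-\psi})}\lesssim\rho^{-2}e^{-c\ve\rho^2}\sa_\Gamma^2$ (from $|\dbar\theta_\gamma|\lesssim\rho^{-1}$ and the Gaussian factor). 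Now correct: since $\ii\di\dbar\psi\ge2\ve\,\omega_o$, H\"ormander's theorem gives $u$ with $\dbar u=\dbar W$ and $\|u\|^2_{L^2(e^{-\psi})}\lesssim(2\ve)^{-1}\|\dbar W\|^2_{L^2(e^{-\psi})}$; as $\dbar W$ vanishes near $\Gamma$, $u$ is holomorphic there, so Proposition \ref{bergman-eucl} gives $\|u|_\Gamma\|_{\ell^2(e^{-\psi})}\le q$ with $q^2\lesssim\ve^{-1}\rho^{-4}e^{-c\ve\rho^2}\sa_\Gamma^2$. When $q<1$ one iterates --- $W-u$ is entire and agrees with $f$ on $\Gamma$ up to the error $u|_\Gamma$ of norm $\le q$; feeding $u|_\Gamma$ back in and summing the geometric series produces $F\in\sh^2(\C,e^{-\psi}\omega_o)$ with $F|_\Gamma=f$ and $\|F\|_{L^2(e^{-\psi})}\lesssim\sa_\Gamma/(1-q)$.

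The remaining work --- and what I expect to be the main obstacle --- is the bookkeeping that turns this into the clean bound $\lesssim\ve^{-3/2}$ uniformly in $\ve$: the curvature floor of $\psi$ is only $2\ve\,\omega_o$, so the H\"ormander constant contributes $\ve^{-1/2}$ and one must choose $\rho$ so that the residual $q$ stays below $1$; since $\rho\lesssim R^o_\Gamma$ and, by the uniform-separation estimate already established, $R^o_\Gamma$ is bounded below in terms of $\sa_\Gamma$ and $M$, this is where the extra power of $\ve$ enters. Two devices help: one may first absorb the bounded part $\ve\min(|\cdot|^2,R^2)$ of the perturbation into $\vp$ using the equivalence of the Hilbert spaces (as in Proposition \ref{avged-wt}), and, where needed, run the $\dbar$-step on the averaged weight $\psi_r$ rather than $\psi$. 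Finally, for $\ve$ large --- say $\ve\gtrsim(R^o_\Gamma)^{-2}$ --- one can bypass the surgery entirely: $\Gamma$ being uniformly separated, $\Upsilon^\Gamma_r\lesssim(R^o_\Gamma)^{-2}\omega_o$ by Proposition \ref{geometric-invariants}(d), so $\ii\di\dbar\psi\ge2\ve\,\omega_o\ge2\pi\alpha\,\Upsilon^\Gamma_r$ for some $\alpha>1$, and Theorem \ref{s-suff-eucl} --- through Theorem \ref{ot-basic} and the lower bound $\inf_\gamma S^\Gamma_r(\gamma)\ge C_r$ of Proposition \ref{denom-bds-eucl} --- gives the extension directly.
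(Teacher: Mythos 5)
Your construction of the local candidates is sound (the identity $|F_\gamma(w)|^2e^{-\psi(w)}=|\tilde F(w)|^2e^{-\vp(w)}e^{-\ve|w-\gamma|^2}$ is correct, as is the reduction to $z=0$), but the cut-and-correct step does not close, and this is not just bookkeeping: it fails exactly in the regime the lemma is used in. Your only source of smallness for $\dbar W$ is the Gaussian factor $e^{-c\ve\rho^2}$ on the annuli, and $\rho$ is capped by the separation radius $R^o_\Gamma$ (otherwise $W|_\Gamma\neq f$, since the $F_\gamma$, being built from the single global extension $\tilde F$, do not vanish at the other points of $\Gamma$). Hence for $\ve\lesssim (R^o_\Gamma)^{-2}$ --- and in the density estimate one takes $\ve=r^{-2}\to 0$ --- the factor $e^{-c\ve\rho^2}$ is of order $1$, while the H\"ormander constant for $\Delta\psi\ge 2\ve\,\omega_o$ contributes $\ve^{-1/2}$, so your residual satisfies $q\gtrsim \ve^{-1/2}\rho^{-1}\sa_\Gamma\to\infty$ and the geometric series diverges. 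Neither rescue you propose repairs this: absorbing $\ve\min(|\cdot|^2,R^2)$ with bounded quasi-isometry constants forces $R\lesssim\ve^{-1/2}$ and leaves the perturbation untouched outside $D_R$, and running the $\dbar$-step with the averaged weight (which does have $\Delta\vp_r\ge m\,\omega_o$ under the standing hypotheses) only replaces $\ve^{-1/2}$ by $m^{-1/2}$, leaving $q\sim m^{-1/2}\rho^{-1}\sa_\Gamma$, a fixed quantity with no reason to be $<1$. The standard way to make such a correction harmless is to force it to vanish on $\Gamma$ (solve the $\dbar$-equation with $\log|T|^2$-type singularities added to the weight), not to iterate; you did not do that, and without it the scheme breaks. (Separately, your large-$\ve$ branch yields an $\ve$-independent constant, not $\lesssim\ve^{-3/2}$, though that regime is not the one the paper needs.)

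The paper avoids $\dbar$ and iteration altogether, and the fix is close to what you already wrote down: apply your Gaussian shift not to one global extension but to the minimal extensions $F_\gamma$ of the individual delta data, which already vanish on $\Gamma-\{\gamma\}$ and satisfy $\|F_\gamma\|\le\sa_\Gamma$. Setting $\tilde F_\gamma(\zeta)=F_\gamma(\zeta)e^{\frac\ve2(2(\bar\gamma-\bar z)(\zeta-z)-|\gamma-z|^2)}$, one has $|\tilde F_\gamma|^2e^{-\vp-\ve|\cdot-z|^2}=|F_\gamma|^2e^{-\vp}e^{-\ve|\zeta-\gamma|^2}$, so $F_f=\sum_\gamma c_\gamma\tilde F_\gamma$ (with the obvious normalizing coefficients) interpolates $f$ on the nose, and the norm is controlled by Cauchy--Schwarz against the Gaussians: one factor $\ve^{-1}$ from Bergman's inequality plus $\int e^{-\ve|\zeta-\gamma|^2}\omega_o$, one from $\sup_\zeta\sum_\gamma e^{-\frac\ve2|\zeta-\gamma|^2}\lesssim\ve^{-1}$ (uniform separation), and one from integrating $e^{-\frac\ve2|\zeta-\gamma|^2}$, giving the $\ve^{-3}$ bound on the squared norm, i.e.\ the stated $\ve^{-3/2}$.
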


\begin{proof}
Since $\Gamma$ is an interpolation sequence, there exist functions $F_{\gamma} \in \sh ^2(\C , e^{-\vp}\omega _o)$ such that 
\[
F_{\gamma}(\mu) = \delta _{\gamma \mu}e^{\vp(\gamma)/2} \quad \text{and} \quad ||F_{\gamma}|| \le \sa _{\Gamma}.
\]
Let 
\[
\tilde F _{\gamma}(\zeta) := F_{\gamma}(\zeta) e^{\frac{\ve}{2}(2 (\bar \gamma -\bar z) (\zeta -z) -|\gamma -z|^2)}.
\]
Then 
\[
\tilde F _{\gamma} (\mu) = \delta _{\gamma\mu} e^{\frac{1}{2}( \vp (\gamma) +\ve |\gamma -z|^2)}
\]
and
\begin{eqnarray*}
\int _{\C} |\tilde F_{\gamma}(\zeta)|^2 e^{-\vp (\zeta)- \ve |\zeta -z|^2}\omega _o(\zeta) &=& \int _{\C} |F_{\gamma}(\zeta)|^2 e^{-\vp (\zeta)} e^{- \ve\left (|\zeta -z|^2 - 2 \re (\overline{\gamma -z})(\zeta-z) + |\gamma -z|^2\right )}\omega _o(\zeta)\\
&=& \int _{\C} |F_{\gamma}(\zeta)|^2 e^{-\vp (\zeta)} e^{- \ve |\zeta -\gamma|^2}\omega _o(\zeta) \\
&\le & C \sa _{\Gamma}^2 \int _{\C} e^{-\ve |\zeta-\gamma|^2}\omega _o (\zeta) = C \ve ^{-1} \sa _{\Gamma}^2,
\end{eqnarray*}
where $C$ depends only on the upper bound $M$ for $\Delta \vp$.  The inequality follows from Proposition \ref{bergman-eucl}(a), and then Proposition \ref{bergman-eucl}(a) implies that 
\[
|\tilde F_{\gamma}(\zeta)|^2e^{-\vp (\zeta) - \ve |\zeta-z|^2} \lesssim \sa _{\Gamma}^2 \ve ^{-1}.  
\]

Now let $f \in \ell ^2 (\Gamma , e^{-\vp - 2\ve |\cdot -z|^2})$.  Define 
\begin{eqnarray*}
F_f (\zeta)&:=& \sum _{\gamma \in \Gamma} f(\gamma) e^{-\frac{1}{2}(\vp(\gamma) + \ve |\gamma -z|^2)} e^{\ve( (\bar \gamma -\bar z) (\zeta -z) -|\gamma -z|^2)}\tilde F_{\gamma}(\zeta)\\
&=& \sum _{\gamma \in \Gamma} f(\gamma) e^{-\frac{1}{2}(\vp(\gamma) + 2\ve |\gamma -z|^2)} e^{\frac{\ve}{2}( 2 (\bar \gamma -\bar z) (\zeta -z) -|\gamma -z|^2)}\tilde F_{\gamma}(\zeta).
\end{eqnarray*}
Then 
\[
F_f|_{\Gamma} = f,
\]
and 
\begin{eqnarray*}
&& \int _{\C} |F_f(\zeta)|^2 e^{-\vp(\zeta) - 2\ve |\zeta -z|^2}\omega _o(\zeta) \\
&\le & \int _{\C}\left ( \sum _{\gamma \in \Gamma} |f(\gamma)| e^{-\frac{1}{2}(\vp(\gamma) + 2\ve |\gamma -z|^2)}e^{\frac{\ve}{2} ( 2 \re (\bar \gamma - \bar z)(\zeta -z) - |\gamma -z|^2)}  |\tilde F_{\gamma}(\zeta)| \right )^2 e^{-\vp(\zeta) - 2\ve |\zeta -z|^2}\omega _o(\zeta)\\
&=& \int _{\C}\left ( \sum _{\gamma \in \Gamma} |f(\gamma)| e^{-\frac{1}{2}(\vp(\gamma) + 2\ve |\gamma -z|^2)}e^{\frac{\ve}{2} (-|\zeta -z|^2 + 2 \re (\bar \gamma - \bar z)(\zeta -z) - |\gamma -z|^2)}  |\tilde F_{\gamma}(\zeta)| e^{-\frac{1}{2} (\vp(\zeta) +\ve |\zeta -z|^2)}\right )^2 \omega _o(\zeta)\\
&\le & \frac{\sa _{\Gamma}^2}{\ve} \int _{\C} \left ( \sum _{\gamma \in \Gamma} |f(\gamma)| e^{-\frac{1}{2}(\vp(\gamma) + 2\ve |\gamma -z|^2)}e^{-\frac{\ve}{2} |\zeta -\gamma|^2} \right )^2 \omega _o(\zeta)\\
&\le&  \frac{\sa _{\Gamma}^2}{\ve} \int _{\C} \left ( \sum _{\gamma \in \Gamma} |f(\gamma)|^2 e^{-(\vp(\gamma) + 2\ve |\gamma -z|^2)}e^{-\frac{\ve}{2} |\zeta -\gamma|^2}\right ) \left ( \sum _{\gamma \in \Gamma} e^{-\frac{\ve}{2} |\zeta -\gamma|^2} \right )\omega _o(\zeta).
\end{eqnarray*}
Since $\Gamma$ is uniformly separated, the second sum converges uniformly for any $\ve > 0$ to a function that is bounded by $\ve ^{-1}$ times a constant that depends on the separation radius of $\Gamma$.  We therefore have 
\[
\int _{\C} |F_f(\zeta)|^2 e^{-\vp(\zeta) - 2\ve |\zeta -z|^2}\omega _o(\zeta) \le \frac{C}{\ve ^3} \sum _{\gamma \in \Gamma}  |f(\gamma)|^2 e^{-(\vp(\gamma) + 2\ve |\gamma -z|^2)}
\]
where $C$ depends only on $M$ and $\sa _{\Gamma}$.  This completes the proof.
\end{proof}

\begin{prop}\label{add-eucl}
Let $\Gamma$ be an interpolation sequence, and let $z \in \C - \Gamma$ satisfy ${\rm dist}(z,\Gamma) > \delta$.  Then the sequence $\Gamma_z  := \Gamma \cup \{z\}$ is an interpolation sequence for the weight $\psi (\zeta) := \vp (\zeta) + \ve |\zeta -z|^2$, and its interpolation constant is bounded above by a constant of the form $K/(\delta \ve ^{5/2})$, where $K$ depends only on $M$ and $\Gamma$.
\end{prop}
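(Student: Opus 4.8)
The plan is to reduce the statement to the construction of a single auxiliary ``peak function'' at $z$ that vanishes on $\Gamma$, and then to build that function from a one-point extension using the Gaussian positivity carried by the term $\ve|\cdot-z|^2$. Throughout write $\psi := \vp + \ve|\cdot-z|^2$ and $\|\cdot\|_\psi$ for the norm of $\sh ^2(\C, e^{-\psi}\omega _o)$, and note $\ii\di\dbar\psi = \ii\di\dbar\vp + 2\ve\omega _o$, so that $2\ve\omega _o \le \ii\di\dbar\psi \le (M+2\ve)\omega _o$.

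\emph{Step 1: the extension for $\Gamma$ relative to $\psi$.} By Lemma \ref{more-pos-wt-better}, $\Gamma$ is an interpolation sequence for $\psi$ with interpolation constant $A_1$ bounded by a multiple of $\ve^{-3/2}$ (the multiple depending only on $M$ and $\sa_\Gamma$); moreover the proof of that lemma produces, for each $\gamma\in\Gamma$, a function $\tilde F_\gamma\in\sh ^2(\C, e^{-\psi}\omega _o)$ with $\tilde F_\gamma(\mu) = \delta_{\gamma\mu}e^{\psi(\gamma)/2}$ for $\mu\in\Gamma$, with $\|\sum_\gamma a_\gamma \tilde F_\gamma\|^2_\psi \lesssim \ve^{-3}\sum_\gamma |a_\gamma|^2$, and — this is what I will use — with the Gaussian localization $|\tilde F_\gamma(\zeta)|^2 e^{-\psi(\zeta)} \le C_r \sa_\Gamma^2 e^{-\ve|\zeta-\gamma|^2}$ for all $\zeta$, which follows from $|\tilde F_\gamma(\zeta)|^2 e^{-\psi(\zeta)} = |F_\gamma(\zeta)|^2 e^{-\vp(\zeta)}e^{-\ve|\zeta-\gamma|^2}$ together with Proposition \ref{bergman-eucl}(a) applied to $\vp$ (whose Laplacian is bounded).

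\emph{Step 2: reduction to one auxiliary function.} Suppose we have produced $H\in\sh ^2(\C, e^{-\psi}\omega _o)$ with $H|_\Gamma = 0$, $|H(z)|^2 e^{-\psi(z)} = 1$ and $\|H\|_\psi^2 \le C_0$; after multiplying $H$ by a unimodular constant we may assume $H(z) = e^{\psi(z)/2}$. Then $\Gamma_z$ is an interpolation sequence for $\psi$ with interpolation constant $\lesssim A_1 + \ve^{-1/2}\sqrt{C_0}$, the implied constant depending only on $M$, $\sa_\Gamma$ and $R^o_\Gamma$. Indeed, given $f$ on $\Gamma_z$ with $\sum_{p\in\Gamma_z}|f(p)|^2 e^{-\psi(p)}\le 1$, set
\[
F := f(z)e^{-\psi(z)/2}H + \sum_{\gamma\in\Gamma} f(\gamma)e^{-\psi(\gamma)/2}\Bigl(\tilde F_\gamma - \tfrac{\tilde F_\gamma(z)}{H(z)}\,H\Bigr).
\]
Since $H|_\Gamma = 0$ and $\tilde F_\gamma(\mu) = \delta_{\gamma\mu}e^{\psi(\gamma)/2}$, one checks $F(z) = f(z)$ and $F(\mu) = f(\mu)$ for every $\mu\in\Gamma$. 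For the norm, the first term contributes $|f(z)|e^{-\psi(z)/2}\|H\|_\psi \le \sqrt{C_0}$, the main sum contributes at most $A_1$ by Step 1, and, using $\tfrac{|\tilde F_\gamma(z)|}{|H(z)|} = |\tilde F_\gamma(z)|e^{-\psi(z)/2}\le \sqrt{C_r}\,\sa_\Gamma\, e^{-\ve|\gamma-z|^2/2}$, the Cauchy--Schwarz inequality and the elementary estimate $\sum_{\gamma\in\Gamma} e^{-\ve|\gamma-z|^2}\lesssim \ve^{-1}(R^o_\Gamma)^{-2}$ for a uniformly separated sequence, the correction sum contributes at most $\bigl(\sum_{\gamma} |f(\gamma)|e^{-\psi(\gamma)/2}\tfrac{|\tilde F_\gamma(z)|}{|H(z)|}\bigr)\|H\|_\psi \lesssim \ve^{-1/2}\sqrt{C_0}$. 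Hence $\|F\|_\psi\lesssim A_1 + \ve^{-1/2}\sqrt{C_0}$; with $A_1\lesssim\ve^{-3/2}$ and (as Step 3 will give) $C_0\lesssim\delta^{-2}\ve^{-4}$ the interpolation constant of $\Gamma_z$ is $\lesssim \ve^{-3/2} + \delta^{-1}\ve^{-5/2}\lesssim K/(\delta\ve^{5/2})$ for $\delta\le 1$, which is the asserted bound (the case $\delta>1$ being subsumed, as only $\dist(z,\Gamma)>\delta$ is used).

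\emph{Step 3: construction of $H$; the main obstacle.} The building block is a one-point peak function for $\psi$: since $\ii\di\dbar\psi\ge 2\ve\omega _o$, the $L^2$ Extension Theorem \ref{ot-basic}, applied exactly as in the proof of Lemma \ref{unif-1pt-interp-eucl} with $Z = \{z\}$, $T_z(\zeta)=\zeta-z$ and the smooth logarithmic-average metric, yields $g\in\sh ^2(\C, e^{-\psi}\omega _o)$ with $g(z) = e^{\psi(z)/2}$ and $\|g\|_\psi^2\lesssim \ve^{-1}$. The function $g$ does not vanish on $\Gamma$, and the heart of the matter is to remedy this while keeping $|H(z)|^2 e^{-\psi(z)}$ bounded below. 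One cannot simply solve a $\dbar$-equation with a weight forcing vanishing on all of $\Gamma$: such a weight would have to dip to $-\infty$ at every point of $\Gamma$ while staying bounded above, which forces a negative contribution to its curvature whose average density is comparable to that of $\Gamma$, and this cannot be absorbed by $\ii\di\dbar\psi\ge 2\ve\omega _o$ once $\ve$ is small. Instead one leans on the interpolation property of $\Gamma$ together with the hypothesis $\dist(z,\Gamma)>\delta$, following the perturbation technique of Ortega Cerd\`a and Seip \cite{quimseep}: one localizes $g$ by a cutoff supported in $D^o_{\delta/2}(z)$ (disjoint from $\Gamma$), corrects the resulting $\dbar$-error so that the correction still vanishes at $z$, and disposes of its remaining values on $\Gamma$ — finitely many nearby ones and exponentially small far-away ones — using the bounded extension operator $\se_\Gamma$ for $(\Gamma,\psi)$ from Lemma \ref{more-pos-wt-better}, controlling the far-away sum by Corollary \ref{Bergman-sums-eucl}. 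The separation radius enters quantitatively through Proposition \ref{geometric-invariants}(c): on the cutoff region $\sigma^\Gamma_r$ is bounded below by a constant multiple of $\delta^2$, and carrying this factor through the $L^2$ estimates — together with the $\ve^{-1}$ from the curvature lower bound and from the relevant Gaussian moments — yields $H$ with $H|_\Gamma=0$, $|H(z)|^2 e^{-\psi(z)}=1$ and $\|H\|_\psi^2\lesssim \delta^{-2}\ve^{-4}$, the implied constant depending only on $M$ and $\Gamma$. Combined with Step 2 this finishes the proof, and the genuine difficulty is precisely this last construction: decoupling the single point $z$ from the infinite set $\Gamma$ in an $L^2$-controlled manner when $z$ is only barely separated from $\Gamma$ and $\ve$ is small.
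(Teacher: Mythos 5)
Your Steps 1--2 are fine: they simply make explicit the reduction the paper also uses (since $\Gamma$ is already interpolating for $\psi$ by Lemma \ref{more-pos-wt-better}, it suffices to produce one function $H$ with $H|_{\Gamma}=0$, $|H(z)|^2e^{-\psi(z)}=1$ and controlled norm). But Step 3, which you yourself identify as ``the heart of the matter,'' is not a proof: the bound $\|H\|^2_{\psi}\lesssim \delta^{-2}\ve^{-4}$ is asserted, not derived, and the sketched route (cut off the peak function on $D^o_{\delta/2}(z)$, correct the $\dbar$-error so that it vanishes at $z$, then dispose of the leftover values on $\Gamma$ via $\se_{\Gamma}$) is never carried out; in particular the quantitative claims in it --- how the correction is forced to vanish at $z$ with the right weight, and the appeal to Proposition \ref{geometric-invariants}(c) to extract a factor $\delta^2$ on the cutoff region, which is not what that proposition says for points far from $\Gamma$ --- are unjustified. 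Since the entire content of the proposition is exactly this construction, the proposal has a genuine gap at its central step.

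The missing idea is much simpler than the machinery you sketch, and needs no cutoff or $\dbar$-estimate: decouple $z$ from $\Gamma$ by \emph{dividing by $\zeta-z$}, and reserve half of the Gaussian to pay for multiplying back. Set $\chi:=\vp+\tfrac{\ve}{2}|\cdot-z|^2$. Lemma \ref{unif-1pt-interp-eucl} gives $G\in\sh^2(\C,e^{-\chi}\omega_o)$ with $G(z)=e^{\vp(z)/2}$ and $\|G\|^2_{\chi}\lesssim \ve^{-1}$. Because $\dist(z,\Gamma)>\delta$, Corollary \ref{Bergman-sums-eucl}(a) gives $\sum_{\gamma}\frac{|G(\gamma)|^2e^{-\chi(\gamma)}}{|\gamma-z|^2}\lesssim \delta^{-2}\ve^{-1}$, so the data $\gamma\mapsto G(\gamma)/(\gamma-z)$ lie in $\ell^2(\Gamma,e^{-\chi})$; since $\Gamma$ is interpolating for $\chi$ with constant $\lesssim\ve^{-3/2}\sa_{\Gamma}$ (Lemma \ref{more-pos-wt-better} again), there is $H$ with $H(\gamma)=G(\gamma)/(\gamma-z)$ and $\|H\|^2_{\chi}\lesssim \sa_{\Gamma}^2\delta^{-2}\ve^{-4}$. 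Then $F:=G-(\cdot-z)H$ satisfies $F(z)=e^{\vp(z)/2}$ and $F|_{\Gamma}\equiv 0$, and since $|\zeta-z|^2e^{-\psi(\zeta)}=|\zeta-z|^2e^{-\frac{\ve}{2}|\zeta-z|^2}e^{-\chi(\zeta)}\lesssim \ve^{-1}e^{-\chi(\zeta)}$, one gets $\|F\|_{\psi}\lesssim (1+\sa_{\Gamma})\delta^{-1}\ve^{-5/2}$, which is exactly the function your Step 2 requires (with an even better exponent than your claimed $C_0$). This Peter--Paul splitting of $\ve|\cdot-z|^2$ plus the division trick is the whole proof in the paper; the obstacle you flagged as genuinely difficult dissolves once the interpolation property of $\Gamma$ is applied to the divided values rather than to a $\dbar$-corrected cutoff.
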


\begin{proof}
It suffices to show that there exists $F \in \sh ^2(\C, e^{-\psi}\omega _o)$ satisfying 
\[
F(z) = e^{\vp(z)/2} \quad \text{and} \quad F|_{\Gamma} \equiv 0
\]
with appropriate norm bounds.  To this end, write 
\[
\chi (\zeta) := \vp (\zeta) + \frac{\ve}{2} |\zeta -z|^2.
\]
Lemma \ref{unif-1pt-interp-eucl} provides us with a function $G \in \sh ^2(\C, e^{-\chi}\omega _o)$ such that 
\[
G(z) = e^{\vp (z)/2} \quad \text{and} \quad \int _{\C} |G|^2 e^{-\chi} \omega _o \le \frac{C}{\ve},
\]
where $C$ only depends on the upper bound of $\Delta \vp$, and not on $z$ or $\Gamma$.  Observe that by Corollary \ref{Bergman-sums-eucl}(a)
\[
\sum _{\gamma \in \Gamma} \frac{|G(\gamma)|^2e^{-\chi(\gamma)}}{|z-\gamma|^2} \lesssim \frac{1}{\delta ^2}\sum _{\gamma \in \Gamma} \int _{D_{R_{\Gamma}}(\gamma)} |G|^2e^{-\chi} \omega _o \lesssim \frac{1}{\delta ^2 \ve}.
\]
Since $\Gamma$ is an interpolation sequence for $\sh ^2 (\C , e^{-\vp}\omega _o)$, it is also an interpolation sequence for $\sh ^2 (\C , e^{-\chi}\omega _o)$.  Thus there exists $H \in \sh ^2 (\C , e^{-\chi}\omega _o)$ such that 
\[
H(\gamma ) = \frac{G(\gamma)}{z-\gamma} \quad \text{and} \quad \int _{\C} |H|^2 e^{-\chi} \omega _o \lesssim \frac{\sa _{\Gamma}^2}{\delta ^2 \ve^4}.
\]
(We have used the fact that the interpolation constant with respect to $\chi$ is controlled by $\ve ^{-3/2}$ times the interpolation constant with respect to $\vp$.)  Let $F \in \co (\C)$ be defined by 
\[
F (\zeta) := G(\zeta) - (\zeta - z)H(\zeta).
\]
Then 
\[
F(z) = G(z) = e^{\vp(z)/2}, \quad \text{and} \quad F(\gamma) = G(\gamma) - (\gamma -z)H(\gamma) = 0
\]
for all $\gamma \in \Gamma$.  Finally, 
\begin{eqnarray*}
\left ( \int _{\C} |F|^2 e^{-\psi} \omega _o \right )^{1/2} &\le& \left ( \int _{\C} |G|^2 e^{-\psi} \omega _o \right )^{1/2} + \left ( \int _{\C} |H(\zeta)|^2|\zeta-z|^2 e^{-\psi(\zeta)} \omega _o(\zeta) \right )^{1/2}\\
& \le &  \left ( \int _{\C} |G|^2 e^{-\chi} \omega _o \right )^{1/2} + \frac{1}{\sqrt{\ve}} \left ( \int _{\C} |H(\zeta)|^2e^{-\chi(\zeta)}\omega _o(\zeta) \right )^{1/2}\\
&\le&  \frac{C(1 + \sa _{\Gamma})}{\delta \ve^{5/2}},
\end{eqnarray*}
as desired.
\end{proof}

\subsubsection{\sf Estimating the density of an interpolation sequence}

We are going to estimate the density of $\Gamma$ in two exhaustive, mutually exclusive cases.  In the first case, we estimate the density at a point $z$ of distance at most $\min (\sa _{\Gamma}^{-1}, R^o_{\Gamma})$ to $\Gamma$, and in the second case, when $z$ lies at least a distance $\min (\sa _{\Gamma}^{-1}, R^o_{\Gamma})$ from $\Gamma$.

In the first case, by Proposition \ref{jiggle-eucl} we may replace the nearest point $\gamma \in \Gamma$ by $z$, and still obtain an interpolation sequence $\Gamma '$, with slightly worse interpolation constant.  Since $\Gamma '$ is an interpolation sequence, we can find a function $F \in \sh ^2(\C, e^{-\vp}\omega _o)$ that vanishes on $\Gamma '-\{z\}$ and satisfies 
\[
F(z) = e^{\vp(z)/2} \quad \text{and} \quad ||F|| \lesssim \sa _{\Gamma}.
\]
By Jensen's Formula \ref{Jensen-eucl} applied to $F$, we have 
\[
\int_{\A^o_r(z)} \log \frac{r^2}{|\zeta -z|^2} \delta_{\Gamma}  \le \frac{1}{2\pi} \int_{D^o_r(z)} \log \frac{r^2}{|\zeta -z|^2} \Delta \vp (\zeta) + \frac{1}{2\pi} \int _{\di D^o_r(z)} \log (|F|^2e^{-\vp}) d\theta_z 
\]
By Proposition \ref{bergman-eucl}, we have 
\[
\int_{\A^o_r(z)} \log \frac{r^2}{|\zeta -z|^2} \delta_{\Gamma}  \le \frac{1}{2\pi} \int_{D^o_r(z)} \log \frac{r^2}{|\zeta -z|^2} \Delta \vp (\zeta) + C, 
\]
where $C$ is independent of $z$ and $r$.

Turning to the second case, by Proposition \ref{add-eucl} the sequence $\Gamma _z := \Gamma \cup \{z\}$ is an interpolation sequence for $\psi$, with interpolation constant at most $\frac{K}{\ve ^{\alpha}}$.  We can thus find $F \in \sh ^2(\C , e^{-\psi} \omega_o)$ such that 
\[
F(z) = e^{\vp(z)/2}, \quad F|_{\Gamma} \equiv 0 \quad \text{and} \quad |F|^2e^{-\psi} \lesssim ||F|| \lesssim \frac{K^2}{\ve ^{2\alpha}}.
\]
Again by Jensen's Formula and Proposition \ref{bergman-eucl}, we have 
\begin{equation}\label{jensen-est}
\int_{\A^o_r(z)} \log \frac{r^2}{|\zeta -z|^2} \delta_{\Gamma}  \le \frac{1}{2\pi} \int_{D^o_r(z)} \log \frac{r^2}{|\zeta -z|^2} (\Delta \vp (\zeta) + \ve \omega _o(\zeta)) - C\log \ve. 
\end{equation}
Thus in both cases, we have the estimate \eqref{jensen-est}.

Now consider the sequence obtained by moving all the points of $\Gamma$ a small distance $\delta$ toward $z$.  By Proposition \ref{jiggle-eucl}, this new sequence is an interpolation sequence as well.  Applying Jensen's formula to this modified sequence and making the change of variables $\zeta \mapsto \frac{r(\zeta-z)}{r+\delta} +z$, we have the estimate
\[
\int_{2\le |\zeta-z| \le r} \log \frac{r^2}{|\zeta -z|^2} \delta_{\Gamma}  \le \frac{1}{2\pi(1+\frac{\delta}{r})} \int_{D^o_{r}(z)} \log \frac{r^2}{|\zeta -z|^2} (\Delta \vp (\zeta) + \ve \omega _o(\zeta)) - C\log \ve. 
\]
Notice that, up to this point, we have not needed the lower bound on $\Delta \vp_r$.  But to control the enormous constant $-C\log \ve$, we need this hypothesis.  Indeed, let us choose $\ve = r^{-2}$.  Then we have 
\begin{eqnarray*}
\int_{2\le |\zeta-z| \le r} \log \frac{r^2}{|\zeta -z|^2} \delta_{\Gamma}  &\le& \frac{1}{2\pi} \int_{D^o_{r}(z)} \log \frac{r^2}{|\zeta -z|^2} \Delta \vp (\zeta)  - \frac{(m-r^{-2}) \delta r^2}{1+\frac{\delta}{r}}+ 2C\log r
\end{eqnarray*}
It follows that for sufficiently large $r$, 
\[
D^+_{\vp}(\Gamma) \le 1-\frac{m\delta }{2M} < 1.
\]
This completes the proof of Theorem \ref{necess-eucl}, and thus of Theorem \ref{char-eucl}.
\qed

\section{Interpolation in $(\C^*, \omega _c)$}\label{cyl-section}

\subsection{Cylindrical distance, covered means, and cover density} 
We make use of the {\it cylindrical distance}, i.e., the geodesic distance $d_c$ of the cylindrical metric $\omega _c$.  Since the universal covering map 
\[
\fp :\C \to \C^*; \zeta \mapsto e^{\zeta}
\]
is a local isometry, and the deck group of $\fp$ is generated by the translation $z\mapsto z+2\pi \ii$, 
\begin{enumerate}
\item[(i)] the distance between two points $\zeta, \eta \in \C^*$ is 
\[
d_c(\zeta,\eta) = \sqrt{(\log |\zeta/\eta|)^2 + (\arg (\zeta/\eta) )^2},
\]
where $\arg$ is the argument starting from the ray that is orthogonal to any half-space containing the points $\eta$ and $\zeta$ and whose boundary contains the origin (so that in particular, $\arg (\zeta/\eta) \in [0,\pi]$), and 
\item[(ii)] a sequence $\Gamma \subset \C^*$ is uniformly separated in the cylindrical distance if and only if the inverse image $\fp ^{-1}(\Gamma)$ is uniformly separated in the Euclidean distance.
\end{enumerate}

\noi By analogy with the case of Euclidean space, we define the {\it separation radius} 
\[
R^c_{\Gamma} := \frac{1}{2}  \inf \{ d_c (\gamma _1, \gamma _2)\ ;\ \gamma _1, \gamma _2\in \Gamma,\ \gamma _1 \neq \gamma _2 \}
\]
of $\Gamma$, so that again $\Gamma$ is uniformly separated if and only if $R^c_{\Gamma} > 0$.

Let $\vp \in L^{1}_{\ell oc} (\C^*)$.  Using the notation \eqref{log-avg-defn}, consider the function 
\[
(\fp ^* \vp)_r(z), \quad z \in \C.
\]
Since $\fp (z+2\pi \ii) = \fp (z)$, 
\[
(\fp ^*\vp)_r(z+2\pi \ii) = (\fp ^*\vp)_r(z),
\]
and thus it follows that 
\[
(\fp ^*\vp)_r (z) = \mu(\vp)_r (e^z)
\]
for some uniquely determined function $\mu(\vp)_r : \C^* \to \R$.
\begin{defn}\label{covered-mean-defn}
The function $\mu(\vp)_r$ is called the {\it covered mean} of $\vp$ (over the disk of radius $r$).
\red
\end{defn}

Observe that if $\vp$ is subharmonic, then so is $\fp ^* \vp$.  Thus for subharmonic $\vp$, 
\[
\fp ^*\vp \le (\fp ^*\vp)_r, \quad \text{and therefore} \quad \vp \le \mu(\vp)_r.
\]

Finally, we turn to the cover density. 

\begin{defn}
Let $\vp :\C^* \to [-\infty,\infty)$ be subharmonic.  The {\it cover density} of a sequence $\Gamma \subset \C^*$ is
\[
\tilde D^+_{\vp}(\Gamma) := D^+_{\fp ^*\vp} (\tilde \Gamma),
\]
where $\fp : \C\to \C^*$ is the (universal covering) exponential map and $\tilde \Gamma = \fp ^{-1}(\Gamma)$.
\red
\end{defn}

\subsection{The main result for $(\C^*, \omega _c)$}
The main interpolation result for $(\C^*, \omega _c)$ can now be stated.

\begin{d-thm}\label{char-cyl}
Let $\vp \in \sC ^2 (\C^*)$ be a weight function satisfying 
\begin{equation}\label{cyl-curv-bd}
0< m\omega _c \le \Delta \vp \le M\omega _c, 
\end{equation}
and let $\Gamma \subset \C^*$ be a closed discrete subset.  Denote by  
\[
\sr_{\Gamma} :\sh ^2 (\C^* , e^{-\vp}\omega _c ) \to \ell ^2 (\Gamma, e^{-\vp})
\]
the restriction map.  If 
\begin{enumerate}
\item[(i+)] $\Gamma$ is uniformly  separated with respect to the cylindrical distance, and 
\item[(ii+)] $\tilde D^+_{\vp}(\Gamma) < 1$, 
\end{enumerate}
then $\sr _{\Gamma}$ is surjective.  Conversely, if $\sr _{\Gamma}$ is surjective, then 
\begin{enumerate}
\item[(i-)] $\Gamma$ is uniformly  separated with respect to the cylindrical distance, and 
\item[(ii-)] $\tilde D^+_{\vp}(\Gamma) \le1$. 
\end{enumerate}
\end{d-thm}

\begin{s-rmk}
Even if we assume only that $\vp \in L^1_{\ell oc}(\C^*)$, standard regularity theory and condition \eqref{cyl-curv-bd} imply that $\vp \in \sC ^{1,\alpha}$.
\red
\end{s-rmk}

\subsection{Sufficiency}

We begin with the analogue of Theorem \ref{s-suff-eucl} for $(\C^*, \omega _c)$.

\begin{d-thm}[Strong sufficiency: Cylindrical case]\label{s-suff-cyl}
Let $\vp :\C^* \to [-\infty, \infty)$ be any subharmonic weight.  Assume that $\Gamma \subset \C^*$ is uniformly separated with respect to the cylindrical distance, and that
\[
\Delta \vp  \ge \alpha \Delta \mu(\log |T|)_r 
\]
for some $\alpha > 1$.  Then the restriction $\sr_{\Gamma} :\sh ^2 (\C^* , e^{-\vp}\omega _c ) \to \ell ^2 (\Gamma, e^{-\vp})$ is surjective.
\end{d-thm}

As in the proof of Theorem \ref{s-suff-eucl}, we begin by applying the $L^2$ Extension Theorem, namely, Theorem \ref{ot-basic}.  In that theorem, set $(X,\omega) = (\C^*, \omega _c)$, fix a function $T \in \co (\C^*)$ with ${\rm Ord}(T)=\Gamma$, and take $\lambda := \mu(\log|\fp^*T|^2)_r$.  Then $|T|^2e^{-\lambda} \le 1$, and the curvature conditions of Theorem \ref{ot-basic} mean exactly that $\tilde D^+_{\vp}(\Gamma) < 1$.  We therefore have the following result.

\begin{d-thm}
Let $\vp$ be a plurisubharmonic function on $\C^*$, and let $\Gamma \subset \C^*$ be any closed discrete subset satisfying $\tilde D^+_{\vp}(\Gamma) < 1$.  Then for any $f : \Gamma \to \C$ satisfying 
\[
\sum _{\gamma \in \Gamma} \frac{|f(\gamma)|^2e^{-\vp(\gamma)}}{|dT(\gamma)|^2_{\omega _c} e^{-\mu(\log |\fp^*T|^2)_r(\gamma)}} < +\infty
\]
there exists $F \in \sh ^2 (\C^*, e^{-\vp}\omega _c)$ such that 
\[
F|_{\Gamma} = f \quad \text{and} \quad \int _{\C^*} |F|^2e^{-\vp} \omega _c \le \frac{24\pi}{1-D^+_{\vp}(\Gamma)} \sum _{\gamma \in \Gamma} \frac{|f(\gamma)|^2e^{-\vp(\gamma)}}{|dT(\gamma)|^2_{\omega _c} e^{-\mu(\log|\fp^*T|^2)_r(\gamma)}}.
\]
\end{d-thm}

The proof of Theorem \ref{s-suff-cyl} then follows from the following result.

\begin{prop}\label{unif-sep-lower-bd-cyl}
Let $\Gamma \subset \C^*$ be a closed discrete subset.  Then $\Gamma$ is uniformly separated with respect to the cylindrical distance if and only if for any $r > 1$ there exists $C_r>0$ such that 
\[
\inf _{\gamma \in \Gamma} |dT(\gamma)|^2_{\omega_c} e^{-\mu(\log |\fp^*T|^2)_r(\gamma)} \ge C_r.
\]
\end{prop}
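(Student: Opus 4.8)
The plan is to transfer everything to the universal cover via the exponential map $\fp:\C\to\C^*$, $\zeta\mapsto e^\zeta$, and then rerun the proof of Proposition \ref{denom-bds-eucl}. Recall from Section \ref{cyl-section} that $\fp$ is a local isometry of $(\C,\omega_o)$ onto $(\C^*,\omega_c)$, that its deck group is generated by $z\mapsto z+2\pi\ii$, and that $\Gamma$ is uniformly separated in the cylindrical distance precisely when $\tilde\Gamma:=\fp^{-1}(\Gamma)$ is uniformly separated in the Euclidean distance. I would begin by fixing $T\in\co(\C^*)$ with ${\rm Ord}(T)=\Gamma$ and setting $\tilde T:=\fp^*T=T\circ\fp\in\co(\C)$; then $\tilde T$ is $2\pi\ii$-periodic and ${\rm Ord}(\tilde T)=\tilde\Gamma$.

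The key bookkeeping step is to identify the quantity in the statement with an object on the cover. For $\gamma\in\Gamma$, pick a lift $\tilde\gamma\in\tilde\Gamma$. Since $\fp$ is a local isometry and $\tilde T=T\circ\fp$, one has $|dT(\gamma)|^2_{\omega_c}=|d\tilde T(\tilde\gamma)|^2_{\omega_o}$; and by the definition of the covered mean, $\mu(\log|\fp^*T|^2)_r(\gamma)$ equals the logarithmic disk average $(\log|\tilde T|^2)_r(\tilde\gamma)$ in the sense of \eqref{log-avg-defn} (this is independent of the chosen lift, since $\tilde T$ is periodic). Hence
\[
|dT(\gamma)|^2_{\omega_c}\,e^{-\mu(\log|\fp^*T|^2)_r(\gamma)}=|d\tilde T(\tilde\gamma)|^2_{\omega_o}\,e^{-(\log|\tilde T|^2)_r(\tilde\gamma)},
\]
and, exactly as in Proposition \ref{geometric-invariants}(a), the right-hand side depends only on the finite set $\tilde\Gamma\cap D^o_r(\tilde\gamma)$. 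So the proposition reduces to the assertion that $\tilde\Gamma$ is uniformly separated (Euclidean) if and only if $\inf_{\tilde\gamma\in\tilde\Gamma}|d\tilde T(\tilde\gamma)|^2_{\omega_o}e^{-(\log|\tilde T|^2)_r(\tilde\gamma)}\ge C_r>0$ for all $r>1$.

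This last assertion I would prove just as in Proposition \ref{denom-bds-eucl}, using the concrete choice $\tilde T(\zeta)=\prod_{\mu\in\tilde\Gamma\cap D^o_r(\tilde\gamma)}(\zeta-\mu)$, so that $|d\tilde T(\tilde\gamma)|^2_{\omega_o}=\prod_{\mu\neq\tilde\gamma}|\tilde\gamma-\mu|^2$. In the forward direction, uniform separation bounds $N:=\#(\tilde\Gamma\cap D^o_r(\tilde\gamma))$ uniformly, $|d\tilde T(\tilde\gamma)|^2_{\omega_o}\ge(2R^o_{\tilde\Gamma})^{2(N-1)}$ is bounded below, and $(\log|\tilde T|^2)_r(\tilde\gamma)\le 2N\log(2r)$ is bounded above (since $|\tilde T|<(2r)^N$ on $D^o_r(\tilde\gamma)$ and the kernel $\log\frac{r^2}{|\zeta-\tilde\gamma|^2}\ge0$ there has integral $\pi r^2$), which gives the lower bound. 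In the reverse direction one argues exactly as in Proposition \ref{denom-bds-eucl}: a lower bound on the displayed quantity forces $\prod_{\mu\neq\tilde\gamma}|\tilde\gamma-\mu|^2$ to be bounded below once $(\log|\tilde T|^2)_r(\tilde\gamma)$ is bounded below by a constant depending only on $r$, and then, since all but the nearest factor are $<(2r)^2$, the nearest-neighbour distance is bounded below. As in Remark \ref{uniformity-of-denom-bd}, $C_r$ depends only on $r$ and $R^o_{\tilde\Gamma}$.

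The one point that differs from Proposition \ref{denom-bds-eucl}, and which I expect to be the only (mild) obstacle, is the lower bound on $(\log|\tilde T|^2)_r(\tilde\gamma)$: the Euclidean section obtains it from the \emph{annulus} average $\lambda^T_r$, whose kernel $\log\frac{r^2}{|\zeta-\tilde\gamma|^2}$ is bounded on $\A^o_r(\tilde\gamma)$, whereas the covered mean is the \emph{disk} average, whose kernel blows up at $\tilde\gamma$. This is harmless: writing $\log|\tilde T|^2=\sum_{\mu\in\tilde\Gamma\cap D^o_r(\tilde\gamma)}\log|\zeta-\mu|^2$ and applying Cauchy--Schwarz to each term gives $\bigl|\tfrac{1}{\pi r^2}\int_{D^o_r(\tilde\gamma)}\log\tfrac{r^2}{|\zeta-\tilde\gamma|^2}\log|\zeta-\mu|^2\,\omega_o\bigr|\le C_r$, because $\log\frac{r^2}{|\cdot-\tilde\gamma|^2}$ and $\log|\cdot-\mu|^2$ both lie in $L^2(D^o_r(\tilde\gamma),\omega_o)$ with norms bounded in terms of $r$ alone; hence $|(\log|\tilde T|^2)_r(\tilde\gamma)|\le (N+1)C_r$, and this estimate replaces the annulus computation of Proposition \ref{denom-bds-eucl} throughout. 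Everything else transcribes verbatim from the Euclidean argument via the isometric covering $\fp$.
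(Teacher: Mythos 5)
Your proposal is correct and takes essentially the same route as the paper: the paper's own proof of Proposition \ref{unif-sep-lower-bd-cyl} consists exactly of the observation that uniform separation in the cylindrical distance is equivalent to Euclidean uniform separation of $\fp^{-1}(\Gamma)$, after which the statement is reduced to its Euclidean analogue, Proposition \ref{denom-bds-eucl}, together with the definition of the covered mean. Your extra Cauchy--Schwarz argument handling the disk kernel of the covered mean (as opposed to the bounded annulus kernel appearing in $\lambda^T_r$) is a correct filling-in of a detail that the paper's one-line reduction leaves implicit.
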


\begin{proof}
Recall that $\Gamma$ is uniformly separated with respect to the cylindrical distance if and only if $\tilde \Gamma \subset \C$ is uniformly separated with respect to the Euclidean distance.  The result therefore follows from its Euclidean analogue, Proposition \ref{denom-bds-eucl}, and the definition of the covered mean $\mu (\vp)_r$.
\end{proof}

Finally, if we replace of $\vp$ by $\mu (\vp)_r$,  Theorem \ref{s-suff-cyl} implies the `if' direction of Theorem \ref{char-cyl}.

\subsection{Necessity}

As in the Euclidean case, we now turn our attention to the necessity of the conditions of Theorem \ref{char-cyl}.  That is to say, we shall prove the following theorem.

\begin{d-thm}\label{necess-cyl}
Let $\vp \in \sC ^2 (\C^*)$ be a weight function satisfying 
\[
m\omega _c \le\Delta \vp \le M \omega _c
\]
for some positive constants $m$ and $M$, and let $\Gamma \subset \C^*$ be a closed discrete subset.  If 
\[
\sr _{\Gamma} :\sh ^2(\C^*, e^{-\vp}\omega _c ) \to \ell ^2 (\Gamma, e^{-\vp})
\]
is surjective, then $\Gamma$ is uniformly separated with respect to the cylindrical distance, and $\tilde D^+_{\vp} (\Gamma) \le 1$.
\end{d-thm}

\subsubsection{\sf The interpolation constant}

As in the Euclidean case, if $\Gamma \subset \C^*$ is an interpolation sequence, then the restriction operator 
\[
\sr _{\Gamma} : \sh ^2(\C ^*, e^{-\vp}\omega _o) \to \ell ^2 (\Gamma, e^{-\vp})
\]
has bounded inverses, and the extension operator of minimal norm 
\[
\se _{\Gamma} : \ell ^2 (\Gamma, e^{-\vp}) \to {\rm Kernel}(\sr _{\Gamma})^{\perp} \subset \sh ^2(\C ^*, e^{-\vp}\omega _o)
\]
is one such operator.  Moreover, the interpolation constant 
\[
\sa _{\Gamma} := \inf \{ A\ ;\ \exists E : \ell ^2 (\Gamma, e^{-\vp}) \to  \sh ^2(\C ^*, e^{-\vp}\omega _o) \text{ with }\sr _{\Gamma}E={\rm Id}\text{ and }||E||\le A\}
\]
is precisely the norm of $\se _{\Gamma}$.

\subsubsection{\sf Necessity of Uniform Separation}

Suppose $\Gamma \subset \C^*$ is an interpolation sequence.  We show that $\tilde \Gamma \subset \C$ is uniformly separated in the Euclidean distance.  For each $t\in \R$, denote by $S_t \subset \C$ the set of all points $z$ such that 
\[
t \le \im z < t+2\pi.
\]
For any $t \in \R$, the strip $S_t$ is a fundamental domain of the universal covering map $\fp (z) = e^z$.

Fix two points $\gamma _1, \gamma _2 \in \Gamma$.  We choose points $\tilde \gamma _1 \in \fp ^{-1}(\gamma_1)$ and $\tilde \gamma _2 \in \fp ^{-1}(\gamma_2)$, and a real number $t$,  such that the Euclidean distance between $\tilde \gamma _1$ and $\tilde \gamma _2$ is the cylindrical distance between $\gamma _1$ and $\gamma_2$, and which is equal to the length of the straight line in $S_t$ connecting  $\tilde \gamma _1$ and $\tilde \gamma _2$.  After choosing and appropriate branch of the logarithm, we may write $\tilde \gamma _i = \log \gamma _i$, $i=1,2$.  We can assume that the straight line joining $\log \gamma _1$ and $\log \gamma _2$ has Euclidean length at most $\pi$; otherwise the two points are at least a distance $\pi$ apart, and there is nothing to prove.  We define the $f \in \ell ^2(\Gamma , e^{-\vp})$ by 
\[
f(\gamma _1) = e^{\vp(\gamma _1)/2} \quad \text{and} \quad f(\mu) = 0 \text{ for all }\gamma \in \Gamma - \{\gamma _1\}.
\]
Since $||f||^2_{\ell ^2(\Gamma , e^{-\vp})} = 1$ and $\Gamma$ is an interpolation sequence, there is a function 
\[
F \in \sh ^2(\C^*, e^{-\vp}\omega _c)
\]
such that 
\[
|F(\gamma _1)|^2e^{-\vp(\gamma_1)} = 1, \quad F(\gamma _2)= 0 \quad \text{and} \quad \int _{\C^*} |F|^2 e^{-\vp} \omega _c \le \sa _{\Gamma}^2.
\]

Now define 
\[
\tilde F = \fp ^* F \quad \text{and} \quad \tilde \vp = \fp ^*\vp.
\]
Then, with $U :=  (S_t-2\pi \ii) \cup S_t \cup (S_t+2\pi \ii)$, 
\[
|\tilde F(\log \gamma _1)|^2e^{-\tilde \vp(\log \gamma_1)} = 1, \quad \tilde F(\log \gamma _2)= 0 \quad \text{and} \quad \int _U |\tilde F|^2 e^{-\tilde \vp} \omega _o \le 3\sa ^2_{\Gamma}.
\]
By Proposition \ref{bergman-eucl}(b) with $r = 2\pi$, we conclude that 
\[
\frac{1}{{\rm dist}_c (\gamma _1, \gamma _2)}= \frac{1}{|\log \frac{\gamma _1}{\gamma_2}|} = \frac{|\tilde F(\log \gamma_1)|^2e^{-\tilde \vp(\log \gamma _1)} - |\tilde F(\log \gamma_2)|^2e^{-\tilde \vp(\log \gamma_2)}}{|\log \gamma _1 - \log \gamma_2|} \le  C
\]
for some constant $C$ independent of $\gamma _1$ and $\gamma_2$.  Thus $\Gamma$ is uniformly separated in the cylindrical metric.

\subsubsection{\sf Uniform interpolation at a point}
\begin{lem}\label{unif-1pt-interp-cyl}
Let $\vp \in \sC ^2(\C^*)$ be a weight function satisfying 
\[
\Delta \vp \ge a\omega _c
\]
for some positive constant $a$.  Then there exists a constant $C>0$ such that for any $z\in \C^*$ there is a function $F\in \sh ^2(\C^*, e^{-\vp}\omega _c)$ satisfying 
\[
|F(z)|^2e^{-\vp(z)} = 1 \quad \text{and} \quad \int _{\C^*} |F|^2e^{-\vp} \omega _c \le C.
\]
\end{lem}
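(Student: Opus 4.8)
The plan is to reduce Lemma \ref{unif-1pt-interp-cyl} to its Euclidean counterpart, Lemma \ref{unif-1pt-interp-eucl}, by lifting to the universal cover. Recall that the exponential map $\fp : \C \to \C^*$, $\zeta \mapsto e^{\zeta}$, is an isometry from the Euclidean metric $\omega _o$ to the cylindrical metric $\omega _c$, and the deck group is generated by the translation $z \mapsto z + 2\pi \ii$. So if $\vp$ satisfies $\Delta \vp \ge a\omega _c$ on $\C^*$, then $\tilde \vp := \fp ^* \vp$ satisfies $\Delta \tilde \vp \ge a\omega _o$ on $\C$, and $\tilde \vp$ is $2\pi \ii$-periodic. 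Given $z \in \C^*$, fix $\tilde z \in \fp ^{-1}(z)$.

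The main obstacle is that pulling back via $\fp$ does \emph{not} commute with the construction of an $L^2$ holomorphic function in the naive way: a function $\tilde F \in \sh ^2(\C, e^{-\tilde \vp}\omega _o)$ need not descend to $\C^*$, since it need not be periodic. So I cannot simply apply Lemma \ref{unif-1pt-interp-eucl} to $\tilde \vp$ at $\tilde z$ and push forward. Instead, the natural approach is to build a periodic competitor by hand: apply Lemma \ref{unif-1pt-interp-eucl} to $\tilde \vp$ at $\tilde z$ to get $\tilde G \in \co (\C)$ with $|\tilde G(\tilde z)|^2 e^{-\tilde \vp(\tilde z)} = 1$ and $\int _{\C} |\tilde G|^2 e^{-\tilde \vp}\omega _o \le C/a$, and then periodize. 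However, the sum $\sum _{k \in \Z} \tilde G(\cdot + 2\pi \ii k)$ generally diverges, so one should instead use Lemma \ref{unif-1pt-interp-eucl}'s underlying construction (based on the $L^2$ extension theorem) applied to the quotient: namely, work directly on $\C^*$.

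So the cleanest route is to mimic the proof of Lemma \ref{unif-1pt-interp-eucl} \emph{on $\C^*$ itself} using Theorem \ref{ot-basic}. Take the one-point divisor $Z = \{z\}$ on the Stein manifold $X = \C^*$, and choose a holomorphic section $T_z$ of the associated line bundle $L_Z$ together with a metric $e^{-\lambda _z}$ normalized so that $\sup _{\C^*} |T_z|^2 e^{-\lambda _z} \le 1$; for instance one can take $T_z(\zeta) = \zeta - z$ on $\C^*$ and $\lambda _z$ a smooth logarithmic average of $\log|\zeta - z|^2$ (cut off away from $0$ and $\infty$, or simply work with the covered mean $\mu(\log|\fp^*T_z|^2)_r$, which is exactly the quantity appearing in Proposition \ref{unif-sep-lower-bd-cyl}). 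Since $\Delta \vp \ge a\omega _c$ and $\mathrm{R}(\omega _c) = 0$, for any $\delta \in (0,1]$ we can choose the averaging radius $r$ large enough that $\Delta \vp + \mathrm{R}(\omega _c) - (1+\delta)\Delta \lambda _z \ge (a - O(r^{-2}))\omega _c \ge 0$, which verifies the curvature hypotheses of Theorem \ref{ot-basic}. Applying that theorem to extend the value $f(z) := e^{\vp(z)/2}$ from $Z$ to $\C^*$ yields $F \in \sh ^2(\C^*, e^{-\vp}\omega _c)$ with $|F(z)|^2 e^{-\vp(z)} = 1$ and
\[
\int _{\C^*} |F|^2 e^{-\vp}\omega _c \le \frac{24\pi}{\delta}\,\frac{|f(z)|^2 e^{-\vp(z)}}{|dT_z(z)|^2_{\omega _c} e^{-\lambda _z(z)}}.
\]
By Proposition \ref{unif-sep-lower-bd-cyl} applied to the one-point sequence $\Gamma = \{z\}$ (or by a direct computation of $|dT_z(z)|_{\omega _c}$ and $\lambda _z(z)$, which by scale-invariance of $\omega _c$ is independent of $z$), the denominator is bounded below by a constant depending only on $r$, hence only on $a$. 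Fixing $\delta = 1$ gives the desired uniform bound $\int _{\C^*}|F|^2 e^{-\vp}\omega _c \le C$ with $C$ independent of $z$, completing the proof.
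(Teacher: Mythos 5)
Your proposal is correct and follows essentially the same route as the paper: apply the $L^2$ extension theorem (Theorem \ref{ot-basic}) directly on $\C^*$ with $T_z(\zeta)=\zeta-z$ and the covered-mean metric $\lambda_z=\mu(\log|T_z|^2)_r$, choosing $r$ large so the curvature hypothesis follows from $\Delta\vp\ge a\omega_c$, and then bound the denominator $|dT_z(z)|^2_{\omega_c}e^{-\lambda_z(z)}$ below via Proposition \ref{unif-sep-lower-bd-cyl} applied to the one-point sequence, with uniformity in $z$ as in Remark \ref{uniformity-of-denom-bd}. (The only quibble is that the error from $\Delta\lambda_z$ is not quite $O(r^{-2})$ here, since $\fp^*T_z$ vanishes on the whole fiber $\fp^{-1}(z)$, but it still tends to $0$ as $r\to\infty$, which is all that is needed.)
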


\begin{proof}
We adapt the idea of the proof of Lemma \ref{unif-1pt-interp-eucl}.  Consider the holomorphic function $T_z(\zeta) = \zeta -z$ and the function $\lambda_z := \mu (\log |T_z|^2)_r :\C^* \to \R$.  Observe that since $\Delta \vp \ge a \omega _c$, for any $\delta > 0$, we can find $r >>0$ such that 
\[
\ii(\di \dbar  \vp + {\rm Ricci}(\omega _c)) = \Delta \vp  \ge (1+\delta) \lambda_z.
\]
We can therefore apply Theorem \ref{ot-basic} to obtain a function $F \in \co (\C^*)$ such that 
\[
F(z) = e^{\vp(z)/2} \quad \text{and} \quad \int _{\C^*} |F|^2e^{-\vp} \omega _c \le \frac{C}{|dT_z(z)|^2_{\omega _c}e^{-\lambda _z(z)}} ,
\]
with $C$ independent of $z$.  Since a sequence consisting of a single point is uniformly separated, an application of Proposition \ref{unif-sep-lower-bd-cyl}, especially in view of Remark \ref{uniformity-of-denom-bd}, completes the proof.
\end{proof}

\subsubsection{\sf Perturbation of interpolation sequences}

\begin{prop}\label{jiggle-cyl}
Let $\Gamma \subset \C^*$ be an interpolation sequence with separation radius $R^c_{\Gamma}$, enumerated as $\Gamma = \{ \gamma _1, \gamma_2, ... \}$, let $\sa _{\Gamma}$ be the interpolation constant of $\Gamma$.  Suppose $\Gamma '\subset \C^*$ is another sequence, such that there exists a constant $\delta \in (0, \min (\sa^{-1} _{\Gamma}, R^c_{\Gamma}))$, and an enumeration $\Gamma ' = \{ \gamma '_1, \gamma '_2, ... \}$ so that 
\[
\sup _{i\in \N} d_c(\gamma _i,\gamma '_i) \le \delta ^2.
\]
Then $\Gamma '$ is also an interpolation sequence, and its interpolation constant is at most 
\[
C\frac{\sa _{\Gamma}}{1-\delta \sa _{\Gamma}},
\]
where $C$ is independent of $\Gamma$.
\end{prop}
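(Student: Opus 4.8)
The plan is to reduce Proposition \ref{jiggle-cyl} to its Euclidean counterpart, Proposition \ref{jiggle-eucl}, by passing to the universal cover $\fp:\C\to\C^*$, $\fp(z)=e^z$. Write $\tilde\vp:=\fp^*\vp$, $\tilde\Gamma:=\fp^{-1}(\Gamma)$, and for $F\in\sh^2(\C^*,e^{-\vp}\omega_c)$ put $\tilde F:=\fp^*F$. Since $\fp$ is a local isometry from $(\C,\omega_o)$ onto $(\C^*,\omega_c)$, we have $\Delta\tilde\vp=\fp^*\Delta\vp$, so $-M\omega_o\le\Delta\tilde\vp\le M\omega_o$; also $|F(\gamma)|^2e^{-\vp(\gamma)}=|\tilde F(\tilde\gamma)|^2e^{-\tilde\vp(\tilde\gamma)}$ for every lift $\tilde\gamma$ of $\gamma$, and, as recalled just before Definition \ref{covered-mean-defn}, $\Gamma$ is uniformly separated in $d_c$ if and only if $\tilde\Gamma$ is uniformly separated in the Euclidean distance, with separation radius comparable to $R^c_\Gamma$. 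I would note at the outset that the pointwise bounds of Proposition \ref{bergman-eucl}(a),(b) are purely local, so they apply to the holomorphic function $\tilde F$ on any Euclidean disk, with constants depending only on $M$.

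The step that requires genuine work, and which I expect to be the main obstacle, is the cylindrical analogue of the comparison inequality \eqref{comparison-eucl}: for every $F\in\sh^2(\C^*,e^{-\vp}\omega_c)$,
\[
\sum_{j}\bigl|\,|F(\gamma_j)|^2e^{-\vp(\gamma_j)}-|F(\gamma'_j)|^2e^{-\vp(\gamma'_j)}\,\bigr|\ \lesssim\ \delta^2\int_{\C^*}|F|^2e^{-\vp}\omega_c,
\]
with implied constant depending only on $M$ and $R^c_\Gamma$. To prove it I would fix a fundamental strip $S_0$ for the deck translation $z\mapsto z+2\pi\ii$ (as in the necessity-of-uniform-separation step of the proof of Theorem \ref{necess-cyl}) and let $\tilde\gamma_j$ be the lift of $\gamma_j$ in $S_0$; lifting the minimal geodesic from $\gamma_j$ to $\gamma'_j$ starting at $\tilde\gamma_j$ yields a lift $\tilde\gamma'_j$ of $\gamma'_j$ with $|\tilde\gamma_j-\tilde\gamma'_j|=d_c(\gamma_j,\gamma'_j)\le\delta^2$. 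Proposition \ref{bergman-eucl}(b) applied to $\tilde F$ at the points of the segment $[\tilde\gamma_j,\tilde\gamma'_j]$ gives
\[
\bigl|\,|\tilde F(\tilde\gamma_j)|^2e^{-\tilde\vp(\tilde\gamma_j)}-|\tilde F(\tilde\gamma'_j)|^2e^{-\tilde\vp(\tilde\gamma'_j)}\,\bigr|\ \lesssim\ \delta^2\int_{D^o_r(\tilde\gamma_j)}|\tilde F|^2e^{-\tilde\vp}\omega_o
\]
for a fixed radius $r$, and summing over $j$ — using that the $\tilde\gamma_j$ are uniformly separated, so the disks $D^o_r(\tilde\gamma_j)$ have bounded overlap, together with the $2\pi\ii$-periodicity of $\tilde F$ and $\tilde\vp$, which identifies $\int_{S_0}|\tilde F|^2e^{-\tilde\vp}\omega_o$ with $\int_{\C^*}|F|^2e^{-\vp}\omega_c$ — yields the displayed estimate. (Equivalently, one states and proves the cylindrical versions of Proposition \ref{bergman-eucl} and Corollary \ref{Bergman-sums-eucl} by this same lifting and quotes them.) The care needed here is the bookkeeping: one must check that the strip decomposition and the overlap count produce constants depending on $\Gamma$ only through $R^c_\Gamma$, in the spirit of Remark \ref{uniformity-of-denom-bd}, and that $\Gamma'$ remains uniformly separated with separation radius controlled from below by that of $\Gamma$.

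With this estimate in hand, the iterative scheme from the proof of Proposition \ref{jiggle-eucl} applies verbatim. Given $f\in\ell^2(\Gamma',e^{-\vp})$ with $\sum_j|f(\gamma'_j)|^2e^{-\vp(\gamma'_j)}\le1$, I would transport it to $\Gamma$ by setting $g_j:=f(\gamma'_j)e^{\frac12(\vp(\gamma_j)-\vp(\gamma'_j))}$ (an isometry of the two weighted $\ell^2$ norms), extend each singleton datum by the minimal-norm extension operator $\se_\Gamma$ for $\sh^2(\C^*,e^{-\vp}\omega_c)$ — which is linear, being orthogonal projection off $\mathrm{Kernel}(\sr_\Gamma)$ — to get $G_j$ with $G_j(\gamma_i)=g_j\delta_{ij}$ and $\|\sum_j G_j\|^2\le\sa_\Gamma^2$, and then choose unimodular constants $\alpha_j$ so that $F_1:=\sum_j\alpha_j G_j$ satisfies $\|F_1\|^2\le\sa_\Gamma^2$ and $\sum_j|f(\gamma'_j)-F_1(\gamma'_j)|^2e^{-\vp(\gamma'_j)}\lesssim\delta^2\sa_\Gamma^2$, the latter coming from the comparison estimate. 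Iterating on the residual data $f_{n+1}:=f_n-F_n|_{\Gamma'}$ produces functions $F_n$ with $\|F_n\|^2\lesssim(\delta\sa_\Gamma)^{2(n-1)}\sa_\Gamma^2$, summable because $\delta\sa_\Gamma<1$; the partial sums $\sum_{n\le N}F_n$ are then uniformly bounded by a multiple of $\sa_\Gamma/(1-\delta\sa_\Gamma)$, hence form a normal family by the cylindrical Bergman inequality (via lifting, Proposition \ref{bergman-eucl}(a)) and Montel's theorem, and a locally uniform limit $F\in\sh^2(\C^*,e^{-\vp}\omega_c)$ extends $f$ with $\|F\|\le C\sa_\Gamma/(1-\delta\sa_\Gamma)$. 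This gives the asserted bound on the interpolation constant of $\Gamma'$, and in particular shows that $\Gamma'$ is an interpolation sequence.
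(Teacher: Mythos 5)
Your proposal is correct and takes essentially the same route as the paper: the paper likewise proves the comparison estimate \eqref{comparison-cyl} by lifting uniform-radius disks around the points of $\Gamma$ to the universal cover and applying Corollary \ref{Bergman-sums-eucl}(b) (possible because uniform separation in $d_c$ is already known), and then runs the iteration of Proposition \ref{jiggle-eucl} unchanged. Your extra bookkeeping with the fundamental strip, bounded overlap, and $2\pi\ii$-periodicity simply spells out details the paper leaves implicit.
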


\begin{proof}
First observe that if $F \in \sh ^2(\C^*, e^{-\vp}\omega_c)$ then 
\begin{equation}\label{comparison-cyl}
\sum _{j=1} ^{\infty} \left | |F(\gamma_j)|^2e^{-\vp(\gamma_j)} - |F(\gamma'_j)|^2e^{-\vp(\gamma'_j)}\right | \lesssim \delta ^2 \int _{\C^*} |F|^2e^{-\vp} \omega _c.
\end{equation}
To obtain this estimate for $F$, we must lift small disks containing the points of $\Gamma$ to the universal cover and use Corollary \ref{Bergman-sums-eucl}(b).  We can carry out this step with disks of a uniform radius because we have already shown that an interpolation sequence is uniformly separated with respect to the cylindrical distance.

The rest of the proof is the same as the Euclidean case, established previously as Proposition \ref{jiggle-eucl}.
\end{proof}

\begin{lem}\label{1-pt-estimate-cyl}
Let $a > 0$, let $\delta \in (0,1/2)$, and let $x \in \C^*$.  
\begin{enumerate}
\item[(i)] $|1-x|^2e^{-a(\log|x|)^2} \le 4e^{a^{-1}}.$
\item[(ii)] If $d_c(x,1) \ge \delta$ then $|1-x|^2 \ge C_{\delta}$, where 
\[
\lim_{\delta \to 0}\delta ^{-2}C_{\delta} =1.
\]
\end{enumerate}
\end{lem}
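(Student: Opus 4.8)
The plan is to reduce both bounds to elementary one-variable estimates by writing $x = e^{t+i\theta}$ with $t = \log|x|\in\R$ and $\theta = \arg x \in(-\pi,\pi]$, so that
\[
|1-x|^2 = (1-e^t)^2 + 2e^t(1-\cos\theta), \qquad d_c(x,1)^2 = t^2+\theta^2 .
\]
For (i) I would first note $|1-x|^2 \le (1+e^t)^2 \le 4e^{2|t|}$, valid for every $\theta$ since $\cos\theta\ge-1$; hence $|1-x|^2 e^{-a(\log|x|)^2} \le 4e^{2|t|-at^2}$, and completing the square, $2|t|-at^2 = \tfrac1a - a\bigl(|t|-\tfrac1a\bigr)^2 \le \tfrac1a$, gives the stated bound $4e^{1/a}$. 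No smallness of $\delta$ enters here.

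For (ii) I would take $C_\delta := \inf\{\,|1-x|^2 : d_c(x,1)\ge\delta\,\}$ and prove $\delta^{-2}C_\delta\to1$. The inequality $\delta^{-2}C_\delta\le1$ is immediate from the test point $x=e^{i\delta}$, for which $|1-x|^2 = 2(1-\cos\delta)\le\delta^2$. For the matching lower bound, fix a small $\delta_0>0$ and split the constraint set. On the ``far'' piece $\{t^2+\theta^2\ge\delta_0^2\}$ a compactness argument --- using that $|1-x|^2$ is continuous, positive off $x=1$, tends to $1$ as $|x|\to0$ and to $\infty$ as $|x|\to\infty$ --- shows $|1-x|^2\ge c(\delta_0)>0$. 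On the ``near'' piece $\{\delta^2\le t^2+\theta^2<\delta_0^2\}$ the Taylor expansions $(1-e^t)^2 = t^2(1+O(|t|))$ and $2e^t(1-\cos\theta) = \theta^2(1+O(t^2+\theta^2))$ give $|1-x|^2\ge(t^2+\theta^2)(1-K\delta_0)\ge\delta^2(1-K\delta_0)$ for an absolute constant $K$, once $\delta_0$ is small enough. Since $c(\delta_0)$ is fixed while $\delta^2\to0$, for $\delta$ small enough both cases yield $C_\delta\ge\delta^2(1-K\delta_0)$, so $\liminf_{\delta\to0}\delta^{-2}C_\delta\ge1-K\delta_0$; letting $\delta_0\to0$ completes the argument.

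The whole lemma is elementary, and the only mildly delicate point is extracting the sharp constant $1$ in (ii) rather than a mere lower bound of the shape $c\,\delta^2$: this forces one to keep the quadratic terms in both Taylor expansions with errors that are uniformly $o(1)$ relative to $t^2+\theta^2$, and to combine that ``near'' estimate with the crude bound on the complementary region. Nothing beyond single-variable calculus and a compactness argument is required.
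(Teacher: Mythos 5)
Your proof is correct and follows essentially the same route as the paper's: part (i) via the bound $|1-x|\le 1+e^{t}$ and completing the square in the exponent (yielding the same constant $4e^{1/a}$), and part (ii) via the parametrization $x=e^{t+i\theta}$ and the Taylor expansion $|1-x|^2=t^2+\theta^2+o(t^2+\theta^2)$ near $x=1$. The only difference is that you make explicit the uniform lower bound on the region $d_c(x,1)\ge\delta_0$ away from $1$, a point the paper's one-line Taylor argument leaves implicit.
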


\begin{proof}
(i) Let $r = \log |x|$ and $\theta = \arg x$.  Then 
\[
|1-x|e^{-a(\log |x|)^2/2} \le (1+e^r)e^{-ar^2/2} \le 1 + e^{-\frac{a}{2}(r^2 -\frac{2r}{a})} = 1 + e^{\frac{1}{2a}-\frac{a}{2}(r -\frac{1}{a})^2} \le 1+e^{1/2a}.
\]
Taking squares, we have $|1-x|^2e^{-a(\log|x|)^2} \le 1+2e^{1/2a}+e^{1/a} \le 4e^{1/a}$.

\noi (ii) If we write $x=e^{s+\ii t}$ then $d_c(x,1) = s^2+t^2$, while $|x-1|^2 = e^{2s}+1 - 2e^s \cos t$.  Taylor's Theorem shows that for $s$ and $t$ small, $e^{2s}+1 - 2e^s \cos t = s^2 + t^2 + o(s^2+t^2)$.
\end{proof}

\begin{prop}\label{add-cyl}
Assume $m\omega _c \le \Delta \vp \le M \omega _c$ for some positive constants $m$ and $M$.  Let $\Gamma$ be an interpolation sequence, and let $z \in \C^* - \Gamma$ satisfy ${\rm dist}_c(z,\Gamma) \ge \delta> 0$.  Then for $\ve >0$ the sequence $\Gamma_z  := \Gamma \cup \{z\}$ is an interpolation sequence for $\sh ^2(\C^*, e^{-(\vp + \ve (\log |\cdot/z|)^2}\omega _c)$, and its interpolation constant is bounded above by some constant $K/ \ve $, where $K$ depends only on $M$, $\Gamma$ and $\delta$, and in particular, not on $z$.
\end{prop}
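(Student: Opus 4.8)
The plan is to imitate the proof of Proposition~\ref{add-eucl}, replacing the Euclidean ingredients by cylindrical ones, and to isolate the two places where the geometry of the cylinder forces a different argument. Write $\psi(\zeta):=\vp(\zeta)+\ve(\log|\zeta/z|)^2$. As in the Euclidean case it suffices to produce a single $F\in\sh^2(\C^*,e^{-\psi}\omega_c)$ with $F(z)=e^{\vp(z)/2}$, $F|_\Gamma\equiv0$, and a norm bound of the stated order: the reduction of the general interpolation statement for $\Gamma_z=\Gamma\cup\{z\}$ to this one function uses that $\Gamma$ is itself an interpolation sequence for the larger weight $\psi$ with interpolation constant comparable to $\sa_\Gamma$ — the cylindrical analogue of Lemma~\ref{more-pos-wt-better} — together with the Bergman-type estimate $|G(z)|^2e^{-\psi(z)}\lesssim\|G\|^2_{\sh^2(\C^*,e^{-\psi}\omega_c)}$, which follows from Proposition~\ref{bergman-eucl} after lifting a small cylindrical disk about $z$ to the universal cover $\fp:\C\to\C^*$.

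For the construction of $F$, set $\chi(\zeta):=\vp(\zeta)+\tfrac\ve2(\log|\zeta/z|)^2$. A short computation (e.g.\ $\di\dbar(\log|\zeta/z|)^2=2\di v\wedge\dbar v$ with $v=\log|\zeta|-\log|z|$ harmonic) gives $\ii\di\dbar(\log|\cdot/z|)^2=\omega_c$, so that $m\omega_c\le\Delta\chi\le(M+\ve)\omega_c$; in particular $\Delta\chi\ge\tfrac\ve2\omega_c>0$. Lemma~\ref{unif-1pt-interp-cyl} then produces $G\in\sh^2(\C^*,e^{-\chi}\omega_c)$ with $G(z)=e^{\vp(z)/2}$ and $\int_{\C^*}|G|^2e^{-\chi}\omega_c$ bounded independently of $z$ and $\Gamma$. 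Next choose a holomorphic $T_z$ on $\C^*$ whose only zero is a simple one at $z$ and which is symmetric about the two ends, in the sense that $|T_z(\zeta)|$ depends only on $|\zeta/z|$; the function $T_z(\zeta)=\zeta/z-z/\zeta$ will do (it also vanishes at $-z$, which does no harm). Using ${\rm dist}_c(z,\Gamma)\ge\delta$ one checks $|T_z(\gamma)|\gtrsim\delta$ for every $\gamma\in\Gamma$, and lifting to $\C$ and applying the cylindrical version of Corollary~\ref{Bergman-sums-eucl}(a) gives $\sum_{\gamma\in\Gamma}|G(\gamma)|^2e^{-\chi(\gamma)}/|T_z(\gamma)|^2\lesssim\delta^{-2}\|G\|^2_{\sh^2(\C^*,e^{-\chi}\omega_c)}$. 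Since $\Gamma$ is an interpolation sequence for $\sh^2(\C^*,e^{-\chi}\omega_c)$, pick $H\in\sh^2(\C^*,e^{-\chi}\omega_c)$ with $H|_\Gamma=(G/T_z)|_\Gamma$ and a corresponding norm bound, and put $F:=G-T_zH$. Then $F(z)=e^{\vp(z)/2}$, $F|_\Gamma\equiv0$, and, using $\psi\ge\chi$, $\int_{\C^*}|F|^2e^{-\psi}\omega_c\le2\int_{\C^*}|G|^2e^{-\chi}\omega_c+2\bigl(\sup_{\C^*}|T_z|^2e^{-\frac\ve2(\log|\cdot/z|)^2}\bigr)\int_{\C^*}|H|^2e^{-\chi}\omega_c$, the supremum being estimated by Lemma~\ref{1-pt-estimate-cyl}(i).

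The hard part will be the $\ve$-dependence, and it stems from the fact that a non-constant holomorphic function on $\C^*$ vanishing at $z$ cannot be bounded: it blows up exponentially in $|\log|\zeta/z||$ at one or both ends. Consequently $\sup_{\C^*}|T_z|^2e^{-\frac\ve2(\log|\cdot/z|)^2}$, controlled by Lemma~\ref{1-pt-estimate-cyl}(i), is only of size $e^{O(1/\ve)}$, so the crude division argument above gives only a bound of the form $Ke^{C/\ve}$; to recover the sharper $K/\ve$ one must keep the Gaussian factor from ever being paired with the growth of $T_z$ far from $z$. The natural device is to pass to the universal cover, where $\psi$ pulls back to $\tilde\vp+\tfrac\ve4|\cdot-\tilde z|^2+\tfrac\ve4\re((\cdot-\tilde z)^2)$ (for $e^{\tilde z}=z$) and the last summand equals $\log|e^{\frac\ve8(\cdot-\tilde z)^2}|^2$, so that multiplying by the entire function $e^{\frac\ve8(w-\tilde z)^2}$ turns the problem into a genuine Bargmann--Fock Gaussian perturbation of a weight with bounded Laplacian, to which the one-dimensional interpolation machinery applies directly; one then transfers the extension back to $\C^*$. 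This same passage to the cover is what is needed to establish the cylindrical analogue of Lemma~\ref{more-pos-wt-better} invoked in the first paragraph, since the Euclidean translation trick fails on $\C^*$ (the required multiplier, a fractional power of $\zeta$, is multivalued); the price is that $e^{\frac\ve8(w-\tilde z)^2}$ is not invariant under $w\mapsto w+2\pi\ii$, so its equivariance must be carried along throughout and one works with a $2\pi\ii$-periodic version of Lemma~\ref{more-pos-wt-better}. With these two points settled the remaining estimates are routine, uniform separation of $\Gamma$ entering, as in Proposition~\ref{jiggle-cyl}, through the lifting to $\C$.
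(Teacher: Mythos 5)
Your skeleton (reduce to producing a single $F=G-T_zH$ with $F|_\Gamma\equiv0$, $F(z)=e^{\vp(z)/2}$, bounded in the perturbed norm) is the same as the paper's, but two of your specific choices create genuine gaps. First, $T_z(\zeta)=\zeta/z-z/\zeta$ has a second zero at $-z$, and nothing in the hypotheses keeps $-z$ away from $\Gamma$: writing $w=\gamma/z$, one has $T_z(\gamma)=(w-1)(w+1)/w$, so if some $\gamma$ is near (or equal to) $-z$ the claimed bound $|T_z(\gamma)|\gtrsim\delta$ fails and the datum $G(\gamma)/T_z(\gamma)$ is uncontrolled or undefined; also $|T_z(\zeta)|$ does not depend only on $|\zeta/z|$ (compare $\zeta=\ii z$ with $\zeta=z$). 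The paper uses $1-\zeta/z$, whose only zero in $\C^*$ is $z$, and Lemma \ref{1-pt-estimate-cyl}(ii) supplies the lower bound on $\Gamma$. Second, and more seriously, you take $H$ to interpolate $(G/T_z)|_\Gamma$ with respect to the enlarged weight $\chi=\vp+\tfrac{\ve}{2}(\log|\cdot/z|)^2$; the assertion that $\Gamma$ is interpolating for $\chi$ is precisely the cylindrical analogue of Lemma \ref{more-pos-wt-better}, which you have not proved and which you also invoke for the reduction in your first paragraph, so at this point your construction is circular. The paper sidesteps this: it takes $G$ bounded with respect to the smaller weight $\psi_z=\vp-\tfrac{m}{2}(\log|\cdot/z|)^2$ (Lemma \ref{unif-1pt-interp-cyl} applies since $\Delta\psi_z\ge\tfrac{m}{2}\omega_c$), checks via Lemma \ref{1-pt-estimate-cyl}(ii) and Corollary \ref{Bergman-sums-eucl}(a) (after lifting) that $\gamma\mapsto G(\gamma)/(1-\gamma/z)$ lies in $\ell^2(\Gamma,e^{-\vp})$ with norm $\lesssim\delta^{-1}$, and then uses the hypothesis that $\Gamma$ interpolates for the \emph{original} weight $\vp$ to get $H\in\sh^2(\C^*,e^{-\vp}\omega_c)$, finally setting $F=G-(1-\zeta/z)H$.

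Your diagnosis of the $\ve$-dependence is sharp-eyed: the division argument only controls $\sup|T_z|^2e^{-c\,\ve(\log|\cdot/z|)^2}$, and Lemma \ref{1-pt-estimate-cyl}(i) gives $4e^{1/\ve}$ for this, not a power of $1/\ve$; note that the same remark applies to the paper's final display, and that the discrepancy is harmless downstream, since in the density estimate one lets $r\to\infty$ at fixed $\ve$ before letting $\ve\to0$, so only a bound independent of $r$ and $z$ is needed. But your proposed repair does not work as sketched: $e^{\frac{\ve}{8}(w-\tilde z)^2}$ is not invariant under the deck transformation $w\mapsto w+2\pi\ii$, hence is not a function on $\C^*$, and multiplying elements of $\sh^2(\C^*,e^{-\vp}\omega_c)$ (i.e.\ deck-invariant functions on the cover) by it leaves the category in which the interpolation problem is posed; saying that "the equivariance must be carried along" is not an argument, and the same objection blocks your deferred proof of the cylindrical version of Lemma \ref{more-pos-wt-better}. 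Concretely: replace $T_z$ by $1-\zeta/z$, route $H$ through the $\vp$-interpolation hypothesis as the paper does (removing the circularity in the construction of $F$), and either give an honest proof of the cylindrical analogue of Lemma \ref{more-pos-wt-better} (the multivalued multiplier $(\zeta/z)^{c}$ must be replaced, e.g.\ by nearby integer powers, with the Gaussian absorbing the error) or content yourself with the single-function statement and an $\ve$-dependence of the form $e^{O(1/\ve)}$, which is all that the later arguments use.
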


\begin{proof}
Write 
\[
\psi _z := \vp - \frac{m}{2} ( \log |\zeta/z|)^2 \quad \text{and} \quad \eta _z := \vp + \ve ( \log |\zeta/z|)^2.
\]
Since $\eta _z (z) = \vp(z)$, it suffices to show that there exists $F \in \sh ^2(\C^*, e^{-\eta _z}\omega _c)$ satisfying 
\[
F(z) = e^{\vp(z)/2} \quad \text{and} \quad F|_{\Gamma} \equiv 0
\]
with appropriate norm bounds.  To this end, since $\Delta \psi _z \ge \frac{m}{2} \omega _c$, Proposition \ref{unif-1pt-interp-cyl} provides us with a function $G \in \sh ^2(\C^*, e^{-\psi_z}\omega _c)$ such that 
\[
G(z) = e^{\vp (z)/2} \quad \text{and} \quad \int _{\C^*} |G|^2 e^{-\psi_z} \omega _c \le C,
\]
where $C$ does not depend on $z$ or $\Gamma$.  

Now, by (ii) of Lemma \ref{1-pt-estimate-cyl} and Corollary \ref{Bergman-sums-eucl}(a) (the latter of which can be applied after passing to the universal cover as in the proof of Proposition \ref{jiggle-cyl}) we have the estimate  
\[
\sum _{\gamma \in \Gamma} \frac{|G(\gamma)|^2e^{-\vp(\gamma)}}{|1-\frac{\gamma}{z}|^2} \lesssim \frac{1}{\delta ^2}\sum _{\gamma \in \Gamma} \int _{D^c_{R_{\Gamma}}(\gamma)} |G|^2e^{-\psi_z} \omega _c \lesssim \frac{1}{\delta ^2}.
\]
Since $\Gamma$ is an interpolation sequence for $\sh ^2 (\C^* , e^{-\vp}\omega _c)$, there exists $H \in \sh ^2 (\C^* , e^{-\vp}\omega _c)$ such that 
\[
H(\gamma ) = \frac{G(\gamma)}{1-\frac{\gamma}{z}}, \ \gamma \in \Gamma, \quad \text{and} \quad \int _{\C^*} |H|^2 e^{-\vp} \omega _c \lesssim \frac{\sa _{\Gamma}^2}{\delta ^2}.
\]
Let $F \in \co (\C^*)$ be defined by 
\[
F (\zeta) := G(\zeta) - \left ( 1 - \frac{\zeta}{z} \right )H(\zeta).
\]
Then 
\[
|F(z)|^2e^{-\vp(z)} = |G(z)|^2e^{-\vp(z)}=1, \quad \text{and} \quad F(\gamma) = G(\gamma) - \left (1- \frac{\gamma}{z}\right )H(\gamma) = 0
\]
for all $\gamma \in \Gamma$.  Finally, using (i) of Lemma \ref{1-pt-estimate-cyl}, we estimate that 
\begin{eqnarray*}
\left ( \int _{\C^*} |F|^2 e^{-\eta_z} \omega _c \right )^{1/2} &\le& \left ( \int _{\C^*} |G|^2 e^{-\eta_z} \omega _c \right )^{1/2} + \left ( \int _{\C^*} |H(\zeta)|^2|1-\tfrac{\zeta}{z}|^2 e^{-\eta_z(\zeta)} \omega _c(\zeta) \right )^{1/2}\\
& \le &  \left ( \int _{\C} |G|^2 e^{-\psi _z} \omega _c \right )^{1/2} + \frac{1}{\sqrt{\ve}} \left ( \int _{\C} |H(\zeta)|^2e^{-\vp(\zeta)}\omega _o(\zeta) \right )^{1/2}\\
&\le&  \frac{C(1 + \sa _{\Gamma})}{\delta \ve},
\end{eqnarray*}
as desired.
\end{proof}

\subsubsection{\sf Estimating the density of an interpolation sequence}

As in the Euclidean case, we will estimate the cover density of $\Gamma$ in two exhaustive, mutually exclusive cases.  In the first case, we estimate the cover density at a point $z$ of cylindrical distance at most the square of $\min (\sa _{\gamma}^{-1}, R^c_{\Gamma})$ to $\Gamma$, and in the second case, when the cylindrical distance from $z$ to $\Gamma$ is at least the square of $\min (\sa _{\Gamma}^{-1}, R^c_{\Gamma})$.

In the first case, if ${\rm dist}_c(z,\Gamma)\le \delta^2$ for some $\delta < \min (\sa _{\gamma}^{-1}, R^c_{\Gamma})$, by Proposition \ref{jiggle-cyl} we may replace the nearest point $\gamma \in \Gamma$ by $z$, and still obtain an interpolation sequence $\Gamma '$, with a possibly slightly worse interpolation constant that is at most $C \frac{\sa _{\Gamma}}{1-\delta \sa _{\Gamma}}$.  Since $\Gamma '$ is an interpolation sequence, we can find a function $F \in \sh ^2(\C^*, e^{-\vp}\omega _c)$ that vanishes on $\Gamma '-\{z\}$ and satisfies 
\[
F(z) = e^{\vp(z)/2} \quad \text{and} \quad ||F|| \lesssim \frac{\sa _{\Gamma}}{1-\delta \sa_{\Gamma}}.
\]
Now write 
\[
\tilde F := \fp ^*F,\quad \tilde \vp := \fp ^*\vp,\quad \tilde \Gamma := \fp ^{-1}(\Gamma) \text{, \quad and \quad }\tilde \Gamma _z := \fp ^{-1}(\Gamma - \{z\}),
\]
where $\fp : \C \to \C^*$ is the universal cover.  By Jensen's Formula \ref{Jensen-eucl} applied to $\tilde F$, for any $x \in \fp ^{-1}(z)$ we have 
\[
\int_{\A^o_r(x)} \log \frac{r^2}{|\zeta -x|^2} \delta_{\tilde \Gamma_z}  \le \frac{1}{2\pi} \int_{D^o_r(x)} \log \frac{r^2}{|\zeta -x|^2} \Delta \tilde \vp (\zeta) + \frac{1}{2\pi} \int _{\di D^o_r(x)} \log (|\tilde F|^2e^{-\vp}) d\theta_x.
\]
By Proposition \ref{bergman-eucl}, we have 
\[
\int_{\A^o_r(x)} \log \frac{r^2}{|\zeta -x|^2} \delta_{\tilde \Gamma}  \le \frac{1}{2\pi} \int_{D^o_r(x)} \log \frac{r^2}{|\zeta -x|^2} \Delta \tilde \vp (\zeta) + C, 
\]
where $C$ is independent of $z$ and $r$.  

Turning to the second case, by Proposition \ref{add-cyl} the sequence $\Gamma _z := \Gamma \cup \{z\}$ is an interpolation sequence for $\eta _z = \vp + \ve (\log |\zeta/z|)^2$, with interpolation constant at most ${K}/{\ve}$.  We can thus find $F \in \sh ^2(\C^* , e^{-\eta _z} \omega_c)$ such that 
\[
F(z) = e^{\vp(z)/2}, \quad F|_{\Gamma} \equiv 0 \quad \text{and} \quad |F|^2e^{-\eta_z} \lesssim ||F|| \lesssim \frac{K^2}{\ve ^{2}}.
\]
Again by Jensen's Formula and Proposition \ref{bergman-eucl}, we have 
\begin{equation}\label{jensen-est-cyl}
\int_{\A^o_r(x)} \log \frac{r^2}{|\zeta -x|^2} \delta_{\tilde \Gamma}  \le \frac{1}{2\pi} \int_{D^o_r(x)} \log \frac{r^2}{|\zeta -x|^2} (\Delta \tilde \vp (\zeta) + \ve \omega _o(\zeta)) -  C\log \ve. 
\end{equation}
Thus in both cases, we have the estimate \eqref{jensen-est-cyl}.  

Since 
\[
\lim_{r \to \infty} \frac{1}{2\pi} \int_{D^o_r(x)} \log \frac{r^2}{|\zeta -x|^2} \Delta \tilde \vp (\zeta)  = +\infty
\]
and, in view of \eqref{cyl-curv-bd},
\[
\lim_{r \to \infty}\frac{\int_{D^o_r(x)} \log \frac{r^2}{|\zeta -x|^2} \omega _o(\zeta)}{\int_{D^o_r(x)} \log \frac{r^2}{|\zeta -x|^2} \Delta \tilde \vp (\zeta)} \le \frac{1}{m}, 
\]
the estimate \eqref{jensen-est-cyl} implies that $\tilde D ^+_{\vp}(\Gamma) \le 1+\ve/m$.  Since $\ve >0$ is arbitrary, $D^+_{\vp}(\Gamma) \le 1$.  This completes the proof of Theorem \ref{necess-cyl}, and thus of Theorem \ref{char-cyl}.
\qed

\section{Interpolation on asymptotically flat finite Riemann surfaces}\label{main-section}

We are now ready to turn to the proof of Theorem \ref{main}.  Let us fix once and for all a compact set $K \relcomp X$ with smooth codimension-$1$ boundary, disjoint open sets $U_1,...,U_n, U_{n+1},...,U_{n+m} \subset X-K$ such that  
\[
K \cup \bigcup _{j=1} ^{n+m} U_j = X,
\]
and biholomorphic maps $F_j : \C - D_j \to U_j$, $1\le j \le n+m$, such that 
\[
F_j ^*\omega = \omega _o \quad \text{for }1 \le j \le n \quad \text{and} \quad F_{n+j} ^*\omega = \omega _c \quad \text{for }1 \le j \le m.
\]
(Either $n$ or $m$ can be zero, but not both.)  We also let 
\[
V_j := F_j(\C - 2D_j),
\]
and cutoff functions $\chi _j \in \sC ^{\infty}(X)$ such that 
\[
\chi _j |_{V_j} \equiv 1 \quad \text{and} \quad  {\rm Support}(\chi _j) \subset U_j, \quad 1 \le j \le n+m.
\]

\subsection{Necessity}

Conveniently, necessity of the conditions of Theorem \ref{main} follows rather easily from the special cases of the Euclidean plane and the cylinder.  We therefore reverse the trend set in the special cases, and begin with necessity.

\subsubsection{\sf Uniform separation of interpolation sequences}

\begin{prop}
If $\Gamma$ is an interpolation sequence then $\Gamma$ is uniformly separated in the geodesic distance associated to $\omega$.
\end{prop}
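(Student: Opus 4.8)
The plan is to argue by contradiction and reduce to the two model situations already settled, namely Theorems~\ref{necess-eucl} and~\ref{necess-cyl}. First, since $\sr_{\Gamma}$ is surjective, the Closed Graph Theorem argument used in the Euclidean case (Section~\ref{Eucl-section}) goes through verbatim on $X$: point evaluations on $\sh^2(X,e^{-\vp}\omega)$ are bounded by Proposition~\ref{ss=us}, and convergence in $\sh^2$ implies locally uniform convergence, so there is a bounded minimal extension operator $\se_{\Gamma}$ and a finite interpolation constant $\sa_{\Gamma}$. Suppose the proposition fails, so there are distinct $\gamma_1^{(k)},\gamma_2^{(k)}\in\Gamma$ with $d_\omega(\gamma_1^{(k)},\gamma_2^{(k)})\to 0$. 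Because $\Gamma$ is closed and discrete, no subsequence of $(\gamma_1^{(k)})$ can stay in a fixed compact subset of $X$ (otherwise $\Gamma$ would accumulate there), so for large $k$ the point $\gamma_1^{(k)}$ lies outside the relatively compact set $X\setminus\bigcup_j V_j$; after passing to a subsequence it lies, for all large $k$, in a single end $V_{j_0}$ and runs to the corresponding puncture, so $d_\omega(\gamma_1^{(k)},\partial V_{j_0})\to\infty$. Once $d_\omega(\gamma_1^{(k)},\gamma_2^{(k)})<1$, the minimizing $\omega$-geodesic between the two points is too short to leave $V_{j_0}$; hence $\gamma_2^{(k)}\in V_{j_0}$ too, and, since $F_{j_0}$ is an isometry there, $d_\omega(\gamma_1^{(k)},\gamma_2^{(k)})$ equals the model distance between the images.

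Consider first a Euclidean end, $F_{j_0}^*\omega=\omega_o$ on $\C-2D_{j_0}$. Put $\tilde\vp:=F_{j_0}^*\vp$; since ${\rm R}(\omega)$ vanishes on $U_{j_0}$, hypothesis~\eqref{curv-bd-gen} gives $-M\omega_o\le \Delta\tilde\vp\le M\omega_o$ there, so Proposition~\ref{bergman-eucl}(b) is available on $\C-2D_{j_0}$ (its gradient estimate only uses the weight on a small disk, so the fact that $\tilde\vp$ is not defined on all of $\C$ is harmless; alternatively one cuts $\tilde\vp$ off outside a large disk and adds a multiple of $\omega_o$). Now take $f\in\ell^2(\Gamma,e^{-\vp})$ with $f(\gamma_1^{(k)})=e^{\vp(\gamma_1^{(k)})/2}$ and $f\equiv 0$ elsewhere, so $\|f\|=1$, and set $F:=\se_{\Gamma}(f)$; then $|F(\gamma_1^{(k)})|^2e^{-\vp(\gamma_1^{(k)})}=1$, $F(\gamma_2^{(k)})=0$, and $\int_X|F|^2e^{-\vp}\omega\le\sa_{\Gamma}^2$. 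Writing $\tilde F:=F\circ F_{j_0}$ and $z_i:=F_{j_0}^{-1}(\gamma_i^{(k)})$, we get $\int_{\C-2D_{j_0}}|\tilde F|^2e^{-\tilde\vp}\omega_o\le\sa_{\Gamma}^2$, and Proposition~\ref{bergman-eucl}(b) bounds $|d(|\tilde F|^2e^{-\tilde\vp})|$ by $C\sa_{\Gamma}^2$ on the unit $\omega_o$-ball about $z_1$. Hence
\[
\frac{1}{|z_1-z_2|}=\frac{|\tilde F(z_1)|^2e^{-\tilde\vp(z_1)}-|\tilde F(z_2)|^2e^{-\tilde\vp(z_2)}}{|z_1-z_2|}\le \sup|d(|\tilde F|^2e^{-\tilde\vp})|\le C\sa_{\Gamma}^2,
\]
so $d_\omega(\gamma_1^{(k)},\gamma_2^{(k)})=|z_1-z_2|\ge (C\sa_{\Gamma}^2)^{-1}$, contradicting $d_\omega\to 0$.

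If instead $U_{j_0}$ is a cylindrical end, one repeats the same computation after lifting to the universal cover $\fp:\C\to\C^*$, $\fp(w)=e^w$, exactly as in the proof of necessity of uniform separation for Theorem~\ref{necess-cyl}: the lift of $F_{j_0}^*\vp$ satisfies a two-sided bound against $\omega_o$, one works on three adjacent fundamental strips, and Proposition~\ref{bergman-eucl}(b) with $r=2\pi$ yields $1/d_c(\gamma_1^{(k)},\gamma_2^{(k)})\le C\sa_{\Gamma}^2$, hence $d_\omega(\gamma_1^{(k)},\gamma_2^{(k)})\ge (C\sa_{\Gamma}^2)^{-1}$, again a contradiction. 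Since there are only finitely many ends, one of the two cases occurs along a further subsequence, and this completes the proof.

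The only genuinely new ingredient beyond the two special cases is the reduction to a single end — this is where finiteness of $\Gamma$ in compacta and the geometry of the asymptotically flat ends enter; the rest is the pointwise Bergman gradient estimate already exploited in Sections~\ref{Eucl-section} and~\ref{cyl-section}. I do not expect a serious obstacle here: the main point to check carefully is that a sufficiently short $\omega$-geodesic between two points deep in an end stays in that end, so that the model distance may be substituted for $d_\omega$, and that the locality of Proposition~\ref{bergman-eucl} lets one use it with the weight $F_{j_0}^*\vp$ that is only defined on the end.
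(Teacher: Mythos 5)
Your proof is correct and follows essentially the same route as the paper: the paper also reduces to the ends, where the Euclidean and cylindrical necessity arguments (the Bergman gradient estimate applied to a norm-controlled extension of a one-point datum) give separation, and handles the compact part by noting that $\Gamma \cap K$ is finite. The only difference is one of detail: you rerun that gradient estimate directly inside each end using the global extension operator on $X$, whereas the paper simply asserts that each $\Gamma \cap U_j$ is an interpolation sequence for the corresponding model case and invokes the special-case results.
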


\begin{proof}
Clearly, for each $j$, $\Gamma \cap {U_j}$ is then an interpolation sequence for either the Euclidean case, or the cylindrical case.  It follows that each $\Gamma \cap {U_j}$ is uniformly separated in the geodesic distance for $\omega$.  Since $K$ is compact and $\Gamma$ is a closed discrete subset, the set $\Gamma \cap K$ is finite.  Therefore $\Gamma$ is uniformly separated.
\end{proof}

\subsubsection{\sf Density bound for interpolation sequences}

\begin{prop}
If $\Gamma$ is an interpolation sequence then $D^+_{\vp}(\Gamma) \le1$.  Moreover, if $(X,\omega)$ has no cylindrical ends, then $D^+_{\vp}(\Gamma) < 1$.
\end{prop}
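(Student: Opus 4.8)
The plan is to localize the interpolation hypothesis to each end of $X$ and then apply the Euclidean and cylindrical necessity theorems, Theorems \ref{necess-eucl} and \ref{necess-cyl}. Since $D^+_{\vp}(\Gamma)=\max_i D^+_{\vp,i}(\Gamma)$, it is enough to show that $D^+_{\vp,i}(\Gamma)\le 1$ for every end $U_i$, with strict inequality whenever $U_i$ is a Euclidean end. Fix such an end; I treat the Euclidean case first, the cylindrical case being identical with $\C$ replaced by $\C^*$ and $\omega _o$ by $\omega _c$. Because $\omega$ is flat on $U_i$, the curvature hypothesis \eqref{curv-bd-gen} reads $m\omega _o\le \Delta(F_i^*\vp)\le M\omega _o$ on $\C-D_i$; I would extend $F_i^*\vp$ across $D_i$ to a weight $\psi _i\in\sC^2(\C)$ which coincides with $F_i^*\vp$ on $\C-D_i$ and satisfies $c\,\omega _o\le\Delta\psi _i\le M'\omega _o$ for suitable $0<c\le m\le M\le M'$ (glue with a large multiple of $|z|^2$; this is the ``cut off and add a multiple of the Euclidean metric'' remedy noted in the introduction, and by \eqref{avg-of-log} the resulting weight meets the hypotheses of Theorem \ref{necess-eucl}). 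Put $\Gamma _i:=F_i^{-1}(\Gamma\cap V_i)\subset\C-2D_i$; it is uniformly separated in the Euclidean distance because $\Gamma$ is uniformly separated in the geodesic distance of $\omega$.

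The heart of the argument is the claim that $\Gamma _i$ is an interpolation sequence for $\sh^2(\C,e^{-\psi _i}\omega _o)$. Given $a\in\ell^2(\Gamma _i,e^{-\psi _i})$, transplant it to the datum $\hat a$ on $\Gamma$ equal to $a\circ F_i^{-1}$ on $\Gamma\cap V_i$ and $0$ elsewhere; since $\psi _i=F_i^*\vp$ on $\C-2D_i$ and $F_i$ is an isometry there, $\hat a\in\ell^2(\Gamma,e^{-\vp})$ with comparable norm, and the interpolation hypothesis on $X$ (the restriction map on $X$ admits a bounded section by the Closed Graph Theorem, exactly as in the special cases) yields $G\in\sh^2(X,e^{-\vp}\omega)$ with $G|_\Gamma=\hat a$ and $\|G\|\lesssim\|\hat a\|$. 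Now $\tilde G:=F_i^*(\chi _i G)$, extended by $0$ across $D_i$, is a smooth function on $\C$ that is holomorphic on $\{\chi _i\equiv1\}$, hence on a neighborhood of $\Gamma _i$, and satisfies $\tilde G|_{\Gamma _i}=a$; moreover $\dbar\tilde G=F_i^*((\dbar\chi _i)G)$ is supported on a compact set $A\subset\C-D_i$ lying at positive distance from $\Gamma _i$. Fix $h\in\co(\C)$ with ${\rm Ord}(h)=\Gamma _i$ and apply H\"ormander's theorem on $\C$ with the weight $\psi _i+2\log|h|^2$, whose Laplacian is $\Delta\psi _i+4\pi\delta _{\Gamma _i}\ge c\,\omega _o$: this produces $u$ with $\dbar u=\dbar\tilde G$ and $\int _\C|u|^2|h|^{-2}e^{-\psi _i}\omega _o\lesssim\int _A|\dbar\tilde G|^2|h|^{-2}e^{-\psi _i}\omega _o\lesssim\|G\|^2$, the last step using that $|h|^{-2}$ and $(\Delta\psi _i)^{-1}$ are bounded on the compact set $A$. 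The non-integrability of $|h|^{-2}$ at points of $\Gamma _i$ forces $u|_{\Gamma _i}=0$, so $F:=\tilde G-u$ is entire with $F|_{\Gamma _i}=a$ and $\int _\C|F|^2e^{-\psi _i}\omega _o\lesssim\|a\|^2$, which proves the claim.

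By Theorem \ref{necess-eucl}, the fact that $\Gamma _i$ is an interpolation sequence for $(\C,\omega _o,\psi _i)$ forces $D^+_{\psi _i}(\Gamma _i)<1$. Since $\psi _i=F_i^*\vp$ outside a compact set, $\Gamma _i$ differs from $F_i^{-1}(\Gamma\cap U_i)$ by only finitely many points, and the asymptotic density depends only on the infinite tails of the sequence and on the curvature of the weight (which is unchanged outside a compact set), we conclude $D^+_{\vp,i}(\Gamma)=D^+_{\psi _i}(\Gamma _i)<1$. Repeating the argument verbatim at a cylindrical end, with $\C$ replaced by $\C^*$, $\omega _o$ by $\omega _c$, and Theorem \ref{necess-eucl} by Theorem \ref{necess-cyl}, gives only the weaker conclusion $D^+_{\vp,i}(\Gamma)=\tilde D^+_{\psi _i}(\Gamma _i)\le 1$. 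Taking the maximum over all ends yields $D^+_{\vp}(\Gamma)\le 1$ in general, and $D^+_{\vp}(\Gamma)<1$ when $X$ has no cylindrical end.

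The step I expect to be the main obstacle is the $\dbar$-correction in the second paragraph: one must simultaneously repair the failure of $\chi _i G$ to be holomorphic along the ``collar'' of the end and preserve the prescribed values on $\Gamma _i$. Both are achieved because $\operatorname{supp}\dbar\chi _i$ is a fixed compact set disjoint from $\Gamma _i$ — so the error term is harmless for the weighted $L^2$ estimate even against the singular factor $|h|^{-2}$ — and because running H\"ormander against the extra weight $2\log|h|^2$ automatically annihilates the correction on $\Gamma _i$. The remaining ingredients (extending the weight with controlled Laplacian, matching norms under the isometries $F_i$, and the stability of the density under finite modifications and compactly supported changes of the weight) are routine.
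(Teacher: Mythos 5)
The step you yourself flag as the main obstacle is where the argument breaks. H\"ormander's theorem with the weight $\psi _i+\log |h|^2$ does produce $u$ with $\dbar u=\dbar \tilde G$, with $\int _{\C}|u|^2|h|^{-2}e^{-\psi _i}\omega _o$ controlled, and the non-integrability of $|h|^{-2}$ does force $u|_{\Gamma _i}=0$; but this gives no control of $\int _{\C}|u|^2e^{-\psi _i}\omega _o$. Since $h$ is a nonconstant entire function, $|h|$ is unbounded and $|h|^{-2}$ is not bounded below by any positive constant, so the singular-weighted norm does not dominate the plain norm; consequently $F=\tilde G-u$ need not lie in $\sh ^2(\C ,e^{-\psi _i}\omega _o)$ at all, let alone with norm $\lesssim \|a\|$, and the claim that $\Gamma _i$ is interpolating for $(\C ,\omega _o,\psi _i)$ is not proved. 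The standard repair is to replace $\log |h|^2$ by a normalized singular weight such as $\log \sigma ^{\Gamma _i}_r$ with $\sigma ^{\Gamma _i}_r=|h|^2e^{-\lambda ^h_r}\le 1$ (Proposition \ref{geometric-invariants}(b)), so that the singular norm dominates the plain one; but that reintroduces the curvature loss $-2\pi \Upsilon ^{\Gamma _i}_r$, and absorbing it into $\Delta \psi _i$ is exactly a density hypothesis on $\Gamma _i$ --- circular in a proof of necessity. No choice of a plain entire $h$ avoids this (a bounded entire function cannot have the infinite zero divisor $\Gamma _i$), and the two-step variant (solve $\dbar$ without the vanishing condition, then re-interpolate the error values on $\Gamma _i$) is again circular. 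A secondary problem is the weight extension: by Stokes, any $\sC ^2$ extension $\psi _i$ agreeing with $F_i^*\vp$ on all of $\C -D_i$ satisfies $\int _{D_i}\Delta \psi _i=\int _{\di D_i}d^c(F_i^*\vp)$, and this flux can be negative (e.g.\ if $F_i^*\vp$ contains a term $-a\log |z|^2$ with $a$ large), so an extension with $\Delta \psi _i\ge c\,\omega _o>0$ as you require need not exist; one must glue only outside a large disk and first remove the uncontrolled harmonic part of $F_i^*\vp$ on the gluing annulus (via Lemma \ref{weight-centering-eucl}), since naive interpolation with $A|z|^2$ produces cross terms of size comparable to $A$ of either sign. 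This is fixable but not routine, and it does not rescue the main gap above.

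The paper avoids the transfer to the full plane altogether: it observes that the necessity proofs of Theorems \ref{necess-eucl} and \ref{necess-cyl} use the interpolation hypothesis only through interpolating functions evaluated on large disks deep inside the end (Jensen's formula, the perturbation results, Propositions \ref{jiggle-eucl}, \ref{add-eucl}, \ref{jiggle-cyl}, \ref{add-cyl}), and that the only function constructed directly --- the one-point interpolant of Lemmas \ref{unif-1pt-interp-eucl} and \ref{unif-1pt-interp-cyl}, built on all of $\C$ or $\C ^*$ --- still restricts to do its job on $\C -D_j$ or $\C ^*-D_j$. So the density estimates are run verbatim for $F_j^{-1}(\Gamma \cap U_j)$ on the end itself, with the original weight, and no extension of either the weight or the sequence is needed. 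If you want to salvage your localization strategy, you would need a genuinely different mechanism for making the $\dbar$-correction vanish on all of $\Gamma _i$ at bounded cost in the $e^{-\psi _i}$-norm; as written, that mechanism is missing.
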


\begin{proof}
For each $j$, $F_j^{-1}(\Gamma \cap {U_j})$ is then an interpolation sequence for either $(\C-D_j,\vp, \omega _o)$ or $(\C^*-D_j, \vp, \omega _c)$.  A moment's thought shows that in our use of Jensen's formula to estimates of the density in the Euclidean and cylindrical settings, we only used our interpolating functions in large disks.  In the course of the proof, the only function we constructed directly (i.e., not from the interpolation hypothesis) was the function interpolating at a point.  Such a function in $\C$ or $\C^*$ can still do the job in $\C-\D_j$.  Thus our method of proof carries over to $\C - D_j$ or $\C^*- D_j$ to get the estimates $D^+_{\vp, j} (F_j^{-1} (\Gamma \cap U_j)) < 1$ for all Euclidean ends, and $D^+_{\vp, j} (F_j^{-1} (\Gamma \cap U_j)) \le 1$ for all cylindrical ends.  
\end{proof}

\begin{s-rmk}
As we mentioned in the introduction, in $\C = \p _1 - \{\infty\}$ with a cylindrical end there is an example of a sequence $\Gamma$ and a weight $\vp$ such that $\tilde D^+_{\vp}(\Gamma) = 1$.  By placing (the tail end of) this sequence in a cylindrical end, we can obtain an example in any asymptotically flat Riemann surface with at least one cylindrical end.
\red
\end{s-rmk}

\subsection{Sufficiency}

As in the special cases of the Euclidean plane and the cylinder, we intend to make use of the $L^2$ Extension Theorem \ref{ot-basic}.  To do so, we need to create the right setting, as we now do.

\subsubsection{\sf Raw densities}

In Definitions \ref{Eucl-density-defn} and \ref{cyl-density-defn}, to define density we replaced $\vp$ with $\vp _r$.  If we use $\vp$ without averaging, the definition can still make sense.  In this case, we call the resulting density the {\it raw density}.  The definition in the Euclidean case is 
\[
\check D ^+_{\vp} (\Gamma) := \inf \left \{ \frac{1}{\alpha} > 0 \ ;\ \Delta \vp \ge \alpha \frac{1}{\pi r^2} \int _{\A ^o_r(z)} \log \frac{r^2}{|\zeta -z|^2} \delta _{\Gamma} (\zeta) \right \}.
\]
In the cylindrical case, the cover density is defined by 
\[
\tilde {\check D}^+_{\vp}(\Gamma) := \check D^+_{\tilde \vp} (\tilde \Gamma).
\]
Finally, in the general case, the raw density 
\[
\check D^+_{\vp} (\Gamma)
\]
of $\Gamma \subset X$ is defined by replacing the density or covered density with their raw counterparts.

\subsubsection{\sf Metric for the (trivial) line bundle associated to $\Gamma$}

Let ${\tilde T} \in \co (X)$ be any holomorphic function such that 
\[
{\rm Ord}({\tilde T}) = \Gamma.
\]
Set 
\[
W_i := F_i (\{ \zeta \in \C\ ;\ {\rm dist}(\zeta , D_i) > r\}), 
\]
where the distance is Euclidean if $\omega |_{U_i}$ is isometric to the Euclidean metric, and cylindrical otherwise.  Define the functions $\lambda ^{\tilde T} _{r,i} :W_i \to \R$ as follows.  If $\omega |_{U_i}$ is isometric to the Euclidean metric, let  
\[
\lambda ^{\tilde T} _{r,i} (z) := \int _{D^o_r(F_i ^{-1}(z))} \log \frac{r^2}{|\zeta - F_i ^{-1}(z)|^2} \log |{\tilde T} \circ F_i ^{-1}(\zeta)|^2 \omega _o (\zeta).
\]
If $\omega |_{U_i}$ is isometric to the cylindrical metric, we choose $x \in \C$ such that the universal covering map $\fp :\C \to \C^*$ maps $x$ to $F_i^{-1}(z)$, and define 
\[
\lambda ^{\tilde T} _{r,i} (z) :=  \int _{D^o_r(x)} \log \frac{r^2}{|\zeta - x|^2} \log |{\tilde T} \circ F_i ^{-1}\circ \fp (\zeta)|^2 \omega _o (\zeta).
\]
Note that if $z \in W_i$ then $\fp (D_r^o(x)) \subset F_i ^{-1}(U_i)$, so that the function $\lambda ^{\tilde T}_{r,i}$ is well-defined on $W_i$ when the latter lies in a cylindrical end.

We then define a function $\lambda _r$ by cutting off the $\lambda ^{\tilde T}_{r,i}$ and dividing by $\pi r^2$:  
\[
\lambda _r := \frac{1}{\pi r^2} \sum _{i=1} ^{n+m} \chi _i \lambda ^{\tilde T}_{r,i}.
\]
Here $\chi _i$ is smooth, takes values in $[0,1]$, is supported in $W_i$, and is identically $1$ on the set 
\[
A_i := F_i (\{ \zeta \in \C\ ;\ {\rm dist}(\zeta , D_i) > r+1\}).
\]
Let 
\[
L := X - \bigcup _{i=1} ^{n+m}A_i.
\]
Then $L$ is compact, and therefore there is a positive constant $M$ such that 
\[
\log |{\tilde T}|^2 - \lambda _r \le M \quad \text{on }L.
\]
On the other hand, the sub-mean value property for subharmonic functions implies that 
\[
\log |{\tilde T}|^2 - \lambda _r \le 0 \quad \text{on }A_i, \quad 1 \le i \le n+m.
\]
Therefore 
\[
\log |{\tilde T}|^2 - \lambda _r \le M \quad \text{on }X.
\]
Letting $T := e^{-M} \tilde T$ (but keeping $\tilde T$ in the definition of $\lambda _r$), we have found functions $T$ and $\lambda _r$ such that 
\[
{\rm Ord}(T)= \Gamma \quad \text{and} \quad |T|^2e^{-\lambda _r} \le 1.
\]

\subsubsection{\sf The semi-strong sufficiency theorem}

Now suppose $\check D^+_{\vp}(\Gamma) < 1$.  If we take $r$ sufficiently large, then there exists $\delta > 0$ such that  
\[
\Delta \vp \ge (1+\delta) \Delta \lambda _r \quad \text{on } A_i, \quad 1 \le i \le n+m.
\]
Since $L \cap \Gamma$ is finite, we also have 
\[
\Delta \lambda _r \le  \frac{1}{r^2} \Omega \quad \text{on }L,
\]
for some positive smooth positive $(1,1)$-form $\Omega$ with compact support on $X$.  

Next, our definition of asymptotic flatness means that ${\rm R}(\omega)$ is compactly supported.  It follows from the curvature hypothesis \eqref{curv-bd-gen} that for $r >> 0$,
\[
\ii \di \dbar \vp + {\rm R}(\omega) \ge (1+\delta ) \Delta \lambda _r.
\]
In view of Theorem \ref{ot-basic}, we have the following result.

\begin{d-thm}\label{wrong-norm-suff-gen}
Let $(X,\omega)$ be an asymptotically flat Riemann surface and let $\vp \in L^1_{\ell oc}(X)$ satisfy the curvature hypothesis 
\[
\Delta \vp + {\rm R}(\omega) \ge m \omega
\]
for some $m>0$.  Let $\Gamma \subset X$ be any closed discrete subset satisfying $D^+_{\vp}(\Gamma) < 1$.  Then for any $f : \Gamma \to \C$ satisfying 
\[
\sum _{\gamma \in \Gamma} \frac{|f(\gamma)|^2e^{-\vp(\gamma)}}{|dT(\gamma)|^2_{\omega} e^{-\lambda _r(\gamma)}} < +\infty
\]
there exists $F \in \sh ^2 (X, e^{-\vp}\omega)$ such that 
\[
F|_{\Gamma} = f \quad \text{and} \quad \int _{X} |F|^2e^{-\vp} \omega _c \le \frac{24\pi}{\delta} \sum _{\gamma \in \Gamma} \frac{|f(\gamma)|^2e^{-\vp(\gamma)}}{|dT(\gamma)|^2_{\omega} e^{-\lambda _r (\gamma)}}.
\]
\end{d-thm}

In view of Propositions \ref{denom-bds-eucl} and \ref{unif-sep-lower-bd-cyl} and the fact $\Gamma$ is uniformly separated if and only if each sequence $\Gamma\cap U_i$ is uniformly separated, we have the following proposition.

\begin{prop}\label{unif-sep-gen}
Let $\Gamma \subset X$ be a closed discrete subset.  Then $\Gamma$ is uniformly separated in the $\omega$-geodesic distance if and only if for each $r >> 0$ there exists $C_r > 0$ such that 
\[
\inf _{\gamma \in \Gamma} |dT(\gamma)|^2_{\omega}e^{-\lambda _r(\gamma)} \ge C_r.
\]
\end{prop}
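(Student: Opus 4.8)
The plan is to localize the statement to the ends $U_1,\dots,U_{n+m}$ and to reduce it there to the Euclidean and cylindrical cases already treated, namely Proposition~\ref{denom-bds-eucl} and Proposition~\ref{unif-sep-lower-bd-cyl}. Recall from the discussion preceding the statement that $\Gamma$ is uniformly separated in the $\omega$-geodesic distance if and only if each $\Gamma\cap U_i$ is, and (since each $U_i\setminus A_i$ is relatively compact in $X$, so that $\Gamma\cap(U_i\setminus A_i)$ is finite) if and only if each $\Gamma\cap A_i$ is.

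First I would dispose of the compact core. Since $L:=X-\bigcup_i A_i$ is compact and $\Gamma$ is closed and discrete, $\Gamma\cap L$ is finite; moreover ${\rm Ord}(T)=\Gamma$ forces $dT(\gamma)\neq 0$ for every $\gamma\in\Gamma$, and $\lambda_r$ is continuous, so $\gamma\mapsto|dT(\gamma)|_\omega^2 e^{-\lambda_r(\gamma)}$ is a continuous positive function on the finite set $\Gamma\cap L$ and hence has a strictly positive minimum there, regardless of any separation hypothesis. Consequently the inequality $\inf_{\gamma\in\Gamma}|dT(\gamma)|_\omega^2e^{-\lambda_r(\gamma)}\ge C_r>0$ holds for all sufficiently large $r$ if and only if, for each $i$, $\inf_{\gamma\in\Gamma\cap A_i}|dT(\gamma)|_\omega^2e^{-\lambda_r(\gamma)}>0$ for all sufficiently large $r$.

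It then remains to check, for each end, that this last condition is equivalent to uniform separation of $\Gamma\cap A_i$. Fix a Euclidean end $U_i$ and transport everything to $\C-D_i$ via the isometry $F_i:(\C-D_i,\omega_o)\to(U_i,\omega)$. On $A_i$ we have $\lambda_r=\tfrac1{\pi r^2}\lambda^{\tilde T}_{r,i}$, and $\tfrac1{\pi r^2}\lambda^{\tilde T}_{r,i}\circ F_i$ is exactly the logarithmic average, in the sense of~\eqref{log-avg-defn}, of $\log|\tilde T\circ F_i|^2$ over Euclidean disks of radius $r$, while $|dT|_\omega^2\circ F_i$ is a fixed positive multiple of $|d(\tilde T\circ F_i)|_{\omega_o}^2$. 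Thus, writing $\Gamma_i:=F_i^{-1}(\Gamma\cap U_i)$ and $x=F_i^{-1}(\gamma)$, the quantity $|dT(\gamma)|_\omega^2e^{-\lambda_r(\gamma)}$ is, up to a constant depending only on $r$, the disk-averaged analogue of the quantity $S^{\Gamma_i}_r(x)$ of Proposition~\ref{geometric-invariants}, and one checks that the proof of Proposition~\ref{denom-bds-eucl} goes through verbatim with disk averages in place of annulus averages: the only facts used are that $\log|\tilde T\circ F_i|^2$ is bounded above on a disk of radius $r$ by a constant times the locally bounded number of zeros in that disk, and that the $\log\tfrac{r^2}{|\zeta-x|^2}$-weighted disk averages of $\log|\zeta-\gamma_j|^2$ are bounded below by constants independent of $\Gamma_i$. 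Hence $\inf_{\gamma\in\Gamma\cap A_i}|dT(\gamma)|_\omega^2e^{-\lambda_r(\gamma)}>0$ for all large $r$ if and only if $\Gamma_i$ is uniformly separated in the Euclidean distance. For a cylindrical end one argues identically after passing to the universal cover---which is precisely how $\lambda^{\tilde T}_{r,i}$ was defined there---invoking Proposition~\ref{unif-sep-lower-bd-cyl} instead of Proposition~\ref{denom-bds-eucl}. Combining these end-by-end equivalences with the reduction of the previous paragraph yields the proposition.

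I expect the only slightly delicate point to be the bookkeeping in the last step: reconciling the globally defined $\lambda_r$ (cut off via the $\chi_i$ and normalized by $\pi r^2$) with the annulus-normalized weights $\lambda^T_r$ and covered means used in the special cases. One may either keep track of the $\Gamma_i$-independent, additive discrepancy between disk and annulus logarithmic averages of $\log|\tilde T\circ F_i|^2$ (harmless in both directions given the uniform local bound on the number of zeros), or simply note, as above, that the proofs of Propositions~\ref{denom-bds-eucl} and~\ref{unif-sep-lower-bd-cyl} are insensitive to this change of normalization. No genuinely new analytic difficulty arises, the hard work having been done in those two propositions.
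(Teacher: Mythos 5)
Your argument is correct and is essentially the paper's own proof: the paper disposes of this proposition in one line, deducing it from Propositions \ref{denom-bds-eucl} and \ref{unif-sep-lower-bd-cyl} together with the fact that $\Gamma$ is uniformly separated if and only if each $\Gamma \cap U_i$ is. Your extra bookkeeping --- treating the finite set $\Gamma \cap L$ separately and reconciling the disk-averaged, $\pi r^2$-normalized $\lambda_r$ with the annulus-normalized $\lambda^T_r$ and the covered means --- simply makes explicit details that the paper leaves implicit.
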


Combining Propositions \ref{wrong-norm-suff-gen} and \ref{unif-sep-gen} immediately implies the following result.

\begin{d-thm}[Semi-stong sufficiency: general case]\label{semi-strong-suff-general}
Let $(X, \omega)$ be an asymptotically flat finite Riemann surface, and let $\vp \in L^1_{\ell oc}(X)$ be a weight satisfying the curvature hypothesis 
\begin{equation}\label{ss-hyp-gen}
\Delta \vp + {\rm R}(\omega) \ge m \omega
\end{equation}
for some $m>0$.  Assume $\Gamma \subset X$ is uniformly separated with respect to the geodesic distance associated to $\omega$, and that 
\[
\check D^+_{\vp} (\Gamma) < 1.
\]
Then the restriction map $\sr _{\Gamma} :\sh ^2 (X,e^{-\vp} \omega )  \to \ell ^2 (\Gamma , e^{-\vp})$ is surjective.
\end{d-thm}

\subsubsection{\sf Sufficiency: conclusion of the proof of Theorem \ref{main}}

To obtain the sufficiency part of Theorem \ref{main}, we need to replace $\vp$ by some sort of average $\vp _r$ of $\vp$ such that 
\begin{enumerate}
\item[(i)] $\vp _r$ still satisfies \eqref{curv-bd-gen}, and 
\item[(ii)] $\sh ^2(X, e^{-\vp_r}\omega) \cong \sh ^2(X,e^{-\vp}\omega)$ and $\ell ^2(\Gamma, e^{-\vp_r}) \cong \ell ^2(\Gamma,e^{-\vp})$ as topological vector spaces, i.e., the isomorphisms are bounded linear maps.
\end{enumerate}

We already know how to do this in the ends:  in a Euclidean end, we simply replace $\vp$ by its logarithmic average $\vp _r$ defined in \eqref{log-avg-defn}, and in a cylindrical end, we use the covered mean $\mu(\vp)_r$ given in Definition \ref{covered-mean-defn}.

In fact, in the interior it doesn't much matter how we do it; densities are checked only in the ends.  For the sake of deciding on one method, we can cover our compact set $K$ by a finite number of open coordinate charts biholomorphic to disks, and simply replace $\vp$ by its average over a disk of some fixed radius.

After averaging $\vp$ in this way, we multiply the $\vp _{i,r}$ of the end by the cutoff functions $\chi _i$, and multiply the interior averages by any smooth cutoff functions that give a partition of unity on $K$.  (Again, what we do in the interior is not so important.)  If we now sum up all of the cut off averages to form $\tilde \vp_r:= \sum _i \vp _{i,r}$, then clearly 
\[
D^+_{\vp} (\Gamma) = \check D ^+_{\tilde \vp_r} (\Gamma).
\]
The trouble is that in the interior, $\tilde \vp _r$ might not satisfy the curvature hypothesis \eqref{ss-hyp-gen}.  To remedy this, we observe that our underlying Riemann surface $X$ is a compact Riemann surface $Y$ with a finite number of points $x_1,...,x_N$ removed.  Thus there exists a smooth metric of strictly positive curvature for some holomorphic line bundle, say $L \to Y$.  By Kodaira's Embedding Theorem, if $k \in \N$ is sufficiently large then the sections of $L^{\tensor k} \to Y$, embed $Y$ in some projective space.  If we take a basis of sections $\sigma _1,...,\sigma _{N_k} \in H^0(Y, L^{\tensor k})$, we can form the metric 
\[
\psi _k := \log  \sum _{j=1} ^{N_k}  |\sigma_j|^2.
\]
The $\sigma _j$ also define local coordinates on the ambient projective space in which we embedded $Y$, so one can see, after trivializing $L$ near the punctures, that in local projective coordinates, that 
\[
\lim _{z \to x_j} |d\psi _k|^2_{\omega} = 0 \quad \text{and} \quad \lim _{z \to x_j}\frac{\Delta \psi _k}{\omega} = 0.
\]
Thus if we take a cutoff function $\chi $ with compact support in $X$, which is $\equiv 1$ on a sufficiently large compact set containing $K$, and write 
\[
\vp _r := \chi \psi _k + \sum _i \vp _{i,r},
\]
then for sufficiently large $k$, Condition \eqref{ss-hyp-gen} will be satisfied by $\vp _r$.  On the other hand, it is still the case that 
\[
D^+_{\vp} (\Gamma) = \check D ^+_{\vp_r} (\Gamma).
\]
Moreover, it is clear from Proposition \ref{avged-wt} that the needed isomorphisms of the relevant Hilbert spaces holds.  Therefore Theorem \ref{semi-strong-suff-general} implies the sufficiency part of Theorem \ref{main}.

\noi This completes the proof of Theorem \ref{main}.
\qed

\begin{rmk}
Note that unlike the special cases of the Euclidean plane and the cylinder, we did not establish a strong version of sufficiency for the general case; while we were able to eliminate the upper bound in \eqref{curv-bd-gen}, we have retained the lower bound (hence the name `semi-strong').  The main problem is that it is hard to define the density globally on $X$ in such a way that it recovers the covered density in the cylindrical ends.  While it is likely that such a global definition of density exists, for almost any sequence $\Gamma$ the density condition $D^+_{\vp}(\Gamma) < 1$ already implies that the weight $\vp$ satisfies the curvature conditions \eqref{curv-bd-gen}.  Nevertheless, geometrically speaking, it would be interesting to find this global definition of density.
\red
\end{rmk}

\end{document}